\newtheorem{theorem}{Theorem}[section]
\newtheorem{lemma}[theorem]{Lemma}
\newtheorem{comment}[theorem]{Comment}
\newtheorem{proposition}[theorem]{Proposition}
\newtheorem{conjecture}[theorem]{Conjecture}
\newtheorem{definition}[theorem]{Definition}
\newtheorem{example}[theorem]{Example}
\newtheorem*{theorem*}{Theorem}   
\title{Restricted Marstrand's projection theorem for general families of linear subspaces}
\author{Jiahan Du}
\address{University of California, Los Angeles}
\email{jiahandu@math.ucla.edu}
\date{\today}
\begin{document}

\begin{abstract}
This paper investigates a refinement of Marstrand's projection theorem; more specifically, let $\Pi_t, t\in[0,1]$ be a family of $m$ dimensional subspaces of the Euclidean space $\mathbb{R}^n$ and let $P_t:\mathbb{R}^4\mapsto \Pi_t$ be the orthogonal projections onto $\Pi_t$. We hope to determine the conditions on $\Pi_t$ under which, for any Borel $A\subset\mathbb{R}^n$, $\dim_H P_t(A)=\min(m,\dim_H A)$ holds for almost every $t$. We propose a conjectured condition on $\Pi_t$ and provide partial progress towards its resolution. We first establish a version of the polynomial Wolff axiom, and then apply polynomial partitioning to derive a version of the $L^p$ Kakeya inequality. Finally, we use a discretization procedure to obtain the desired bound.
\end{abstract}
\maketitle

\section{Introduction}
In 1954, Marstrand \cite{Marstrand} proved the following theorem, starting a long line of research on projection theorems.

\begin{theorem}[Marstrand \cite{Marstrand}]
Let $A\subset\mathbb{R}^2$ be a Borel measurable set, and $P_t:\mathbb{R}^2\rightarrow\mathbb{R},t\in[0,2\pi]$ be the orthogonal projection onto the span of $(\cos t,\sin t)$. Then $\dim_H P_t(A)=\min(\dim_H A,1)$ for almost every $t\in[0,2\pi]$. 
\end{theorem}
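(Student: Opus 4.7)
The plan is to use the classical potential-theoretic (energy) method. The upper bound $\dim_H P_t(A) \le \min(\dim_H A, 1)$ is immediate, since $P_t$ is $1$-Lipschitz (hence cannot increase Hausdorff dimension) and the image lies in $\mathbb{R}$. So essentially all of the work lies in showing the matching lower bound $\dim_H P_t(A) \ge \min(\dim_H A, 1)$ for almost every $t$.

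I would first fix an arbitrary $s < \min(\dim_H A, 1)$ and apply Frostman's lemma to obtain a Borel probability measure $\mu$ supported on $A$ with finite $s$-energy
\[
I_s(\mu) \;=\; \iint |x-y|^{-s}\, d\mu(x)\, d\mu(y) \;<\; \infty.
\]
Pushing $\mu$ forward by $P_t$ gives a measure $\mu_t = (P_t)_*\mu$ on $\mathbb{R}$. The goal is reduced to showing $I_s(\mu_t) < \infty$ for a.e.\ $t \in [0,2\pi]$, since this immediately yields $\dim_H P_t(A) \ge s$ via the standard energy-dimension dictionary.

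To that end, I would compute $\int_0^{2\pi} I_s(\mu_t)\, dt$ by Fubini:
\[
\int_0^{2\pi} I_s(\mu_t)\, dt \;=\; \iint \left( \int_0^{2\pi} \bigl|(x-y)\cdot (\cos t, \sin t)\bigr|^{-s}\, dt \right) d\mu(x)\, d\mu(y).
\]
Writing $x - y = r\,(\cos\theta, \sin\theta)$ with $r = |x-y|$ and substituting $u = t - \theta$, the inner integral becomes $|x-y|^{-s} \int_0^{2\pi} |\cos u|^{-s}\, du$. The factor $\int_0^{2\pi} |\cos u|^{-s}\, du$ is finite precisely when $s < 1$, which is where the dimension of the target line enters. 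Hence the full double integral is at most a constant multiple of $I_s(\mu) < \infty$.

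Finiteness of the $t$-average forces $I_s(\mu_t) < \infty$ for a.e.\ $t$. Choosing an increasing sequence $s_n \nearrow \min(\dim_H A, 1)$ and intersecting the corresponding countably many sets of full measure, we conclude $\dim_H P_t(A) \ge s_n$ for every $n$ off a single null set, hence $\dim_H P_t(A) \ge \min(\dim_H A, 1)$ a.e., completing the proof. The only genuinely delicate step is the kernel estimate $\int |\cos u|^{-s}\, du < \infty$ for $s<1$: this single inequality is what pins the transition threshold at dimension $1$, and is precisely the feature that later work (including the present paper) must generalize when the direction set $\{(\cos t,\sin t)\}$ is replaced by a more general family of subspaces.
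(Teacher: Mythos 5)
Your argument is correct, and it is precisely Kaufman's potential-theoretic proof of Marstrand's theorem, which the paper itself singles out (immediately after stating the theorem) as the ``much simpler'' alternative to Marstrand's original elementary-but-involved argument; the paper cites the result and does not reproduce a proof. The energy method you use -- Frostman measure on $A$, pushforward under $P_t$, a Fubini average in $t$, and the kernel bound $\int_0^{2\pi}|\cos u|^{-s}\,du<\infty$ for $s<1$ -- is exactly the mechanism behind Theorem \ref{Kaufman} in the paper and is standard. One small point of precision worth tightening: Frostman's lemma applied at exponent $s$ gives $\mu(B(x,r))\lesssim r^{s}$, which yields $I_{s'}(\mu)<\infty$ only for $s'<s$; to get $I_s(\mu)<\infty$ you should first pick an auxiliary $s''\in(s,\dim_H A)$ and apply Frostman at $s''$ (or, equivalently, observe that since $s$ was arbitrary below $\min(\dim_H A,1)$ the conclusion is unaffected). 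Otherwise the argument is complete, including the upper bound via Lipschitzness and the countable exhaustion $s_n\nearrow\min(\dim_H A,1)$ to pass from ``a.e.\ $t$ for each $s_n$'' to ``a.e.\ $t$ simultaneously.''
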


Here and in the future, we use $\dim_H$ to denote Hausdorff dimension in Euclidean spaces. Marstrand's original proof is elementary but somewhat complex. Later, Kauffman \cite{Kaufman} provided a much simpler proof of Marstrand's result using potential theory. His method also generalizes to higher dimensions:

\begin{theorem}[Kaufman\cite{Kaufman}]\label{Kaufman}
    Let $A\subset\mathbb{R}^n$ be a Borel measurable set and $1\leq m\leq n$, if $K\in G(n,m)$, and we denote the orthogonal projection from $\mathbb{R}^n$ onto $K$ to be $P_K$, then $\dim_H P_K(A)=\min(\dim_H A,m)$ for almost every $K\in G(n,m)$. 
\end{theorem}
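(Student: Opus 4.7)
The plan is to adapt Kaufman's potential-theoretic strategy, using the energy integral characterization of Hausdorff dimension. Recall that for a Borel set $A\subset\mathbb{R}^n$, $\dim_H A$ is the supremum of those $s\geq 0$ such that there exists a Borel probability measure $\mu$ supported on $A$ with finite $s$-energy
\[
I_s(\mu)=\iint\frac{d\mu(x)\,d\mu(y)}{|x-y|^s}<\infty.
\]
Since $P_K$ is $1$-Lipschitz, $\dim_H P_K(A)\leq\min(\dim_H A,m)$ trivially holds for every $K$. So the content of the theorem is the reverse inequality for almost every $K\in G(n,m)$, and for this it suffices to show that for every $s<\min(m,\dim_H A)$, the pushforward measure $\mu_K:=(P_K)_*\mu$ satisfies $I_s(\mu_K)<\infty$ for $\gamma_{n,m}$-almost every $K$, where $\gamma_{n,m}$ is the rotation-invariant probability measure on the Grassmannian.

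First, by Frostman's lemma, I would fix $s<\min(m,\dim_H A)$ and choose a probability measure $\mu$ on $A$ with $I_s(\mu)<\infty$. Then by Fubini's theorem,
\[
\int_{G(n,m)}I_s(\mu_K)\,d\gamma_{n,m}(K)
=\iint\left(\int_{G(n,m)}\frac{d\gamma_{n,m}(K)}{|P_K(x-y)|^{s}}\right)d\mu(x)\,d\mu(y).
\]
The key step is the pointwise bound
\[
\int_{G(n,m)}\frac{d\gamma_{n,m}(K)}{|P_K v|^{s}}=C(n,m,s)\,|v|^{-s}\qquad\text{for every }v\in\mathbb{R}^n\setminus\{0\},
\]
with $C(n,m,s)<\infty$ provided $s<m$. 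Homogeneity reduces this to the case $|v|=1$, and by $O(n)$-invariance of $\gamma_{n,m}$ we may fix $v$ and average over $K$, or equivalently fix $K=\mathbb{R}^m\times\{0\}$ and integrate over $v$ uniformly on $S^{n-1}$. The distribution of $|P_K v|^2$ is then $\mathrm{Beta}(m/2,(n-m)/2)$, so the density of $r=|P_K v|$ is proportional to $r^{m-1}(1-r^2)^{(n-m)/2-1}$ on $[0,1]$, and
\[
\int_0^1 r^{-s}\cdot r^{m-1}(1-r^2)^{(n-m)/2-1}\,dr<\infty
\]
precisely when $s<m$. The main technical obstacle sits exactly here: one must verify the correct power-law blow-up of $|P_K v|^{-s}$ near the set $\{K:v\perp K\}$, and this is the unique place where the dimensional hypothesis $s<m$ is used.

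Combining these two observations yields
\[
\int_{G(n,m)}I_s(\mu_K)\,d\gamma_{n,m}(K)\leq C(n,m,s)\cdot I_s(\mu)<\infty,
\]
and therefore $I_s(\mu_K)<\infty$ for $\gamma_{n,m}$-a.e.\ $K$. Consequently $\dim_H P_K(A)\geq s$ outside a $\gamma_{n,m}$-null set $N_s$. Finally, choosing a sequence $s_k\nearrow\min(m,\dim_H A)$ and taking the countable union $\bigcup_k N_{s_k}$ produces a single null set off of which $\dim_H P_K(A)\geq\min(m,\dim_H A)$, completing the proof.
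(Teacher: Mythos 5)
Your proof is correct and is precisely the potential-theoretic argument of Kaufman that the paper references; the paper itself does not prove this statement but cites it, explicitly noting in the introduction that ``Kaufman provided a much simpler proof of Marstrand's result using potential theory'' which ``generalizes to higher dimensions.'' Your Fubini-plus-Beta-distribution computation of $\int_{G(n,m)}|P_K v|^{-s}\,d\gamma_{n,m}(K)$ is the standard way to obtain the key integral estimate, and the rest (energy characterization of dimension, passing to a sequence $s_k\nearrow\min(m,\dim_H A)$) is exactly as in Mattila's treatment.
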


Here $G(n,m)$ is the Grassmannian of $m$ dimensional subspaces of $\mathbb{R}^n$. Since then, researchers have discovered various generalizations and refinements of Marstrand's original result, and connections to other problems in mathematics, e.g. the sum product theorem by Bourgain \cite{Bourgain}.

Later, Gan, Guo, Guth, Harris, Maldague and Wang in \cite{plane} introduced new tools from harmonic analysis into the study of this topic. They used the so called decoupling inequalities to prove the following refinement of theorem \ref{Kaufman}. Using similar methods, Gan, Guo and Wang proved a further generalization in \cite{projection}:

\begin{theorem}[Restricted projection theorem \cite{projection}]\label{projection} Let $\gamma:[0,1]\rightarrow\mathbb{R}^n$ be a smooth curve such that $\det(\gamma(t),...,\gamma^{(n-1)}(t))\not=0$ for all $t\in[0,1]$ and $1\leq m\leq n-1$. Let $P_t$ be the orthogonal projection onto $\text{span}(\gamma(t),...,\gamma^m(t))$ and let $A\subset\mathbb{R}^n$ be Borel, then $\dim_H P_t(A)=\min(\dim_H A,m)$ for almost every $t\in[0,1]$. 
    
\end{theorem}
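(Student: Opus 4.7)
The plan is to follow the Fourier-analytic/decoupling strategy introduced in \cite{plane} and refined in \cite{projection}. The first step is the standard potential-theoretic reduction: to prove $\dim_H P_t(A)\geq s$ for a.e.\ $t$ for every $s<\min(m,\dim_H A)$, it suffices to show that for each Borel probability measure $\mu$ on $A$ of finite $s$-energy the integral $\int_0^1 I_s(P_{t*}\mu)\,dt$ is finite. Writing $\Pi_t$ for the $m$-dimensional target subspace, one has $\widehat{P_{t*}\mu}(\eta)=\widehat\mu(\eta)$ for $\eta\in\Pi_t$, so by the Plancherel formula for $s$-energies this is equivalent to
$$\int_0^1\int_{\Pi_t}|\widehat\mu(\eta)|^2|\eta|^{s-m}\,d\eta\,dt<\infty.$$
After a dyadic decomposition $|\eta|\sim R$ and rescaling, the task becomes a scale-by-scale estimate, at scale $\delta=R^{-1}$, for the $L^2$ mass of $\widehat\mu$ on an $O(\delta)$-thickening of the $(m+1)$-dimensional bundle $\Sigma:=\bigcup_{t\in[0,1]}\Pi_t\subset\mathbb{R}^n$.

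The scale-by-scale estimate is, by $TT^*$/duality, equivalent to a Kakeya-type $L^p$ inequality for $\delta$-tubes aligned with the $(n-m)$-planes $\Pi_t^\perp$, as $t$ ranges over a maximal $\delta$-separated subset of $[0,1]$ and with $p$ determined by $s$. The engine for this Kakeya inequality is an $\ell^2$-decoupling estimate for the submanifold $\Sigma$, in the spirit of Bourgain-Demeter-Guth. The non-degeneracy hypothesis $\det(\gamma,\gamma',\dots,\gamma^{(n-1)})\neq 0$ amounts to saying that $\gamma,\gamma',\dots,\gamma^{(n-1)}$ forms a moving frame of $\mathbb{R}^n$, which translates into maximal moment-curve-style torsion of $\Sigma$ and is what allows a sharp decoupling inequality to hold. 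Feeding the Kakeya bound back and summing over scales, combined with an $\varepsilon$-removal argument, then closes the energy estimate.

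The main obstacle I anticipate is the decoupling step. In the extreme cases $m=1$ and $m=n-1$ it reduces to, respectively, classical $\ell^2$-decoupling for the moment curve and decoupling for a smooth hypersurface of non-vanishing Gaussian curvature, both of which are known. For intermediate $m$, however, $\Sigma$ is a less standard ruled submanifold and one needs a refined multilinear or small-cap variant of decoupling tailored to this bundle, with the sharp exponent required to recover the full target dimension $m$ rather than only a partial range. A secondary hurdle is executing the $\varepsilon$-removal cleanly enough that the $\delta^{-\varepsilon}$ losses in the decoupling output do not degrade the final Hausdorff dimension bound.
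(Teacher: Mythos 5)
Your proposal captures the right spirit: the cited proof of \cite{projection} is indeed a Fourier-analytic, decoupling-based argument, and reducing to a scale-by-scale estimate on a $\delta$-thickening of the bundle $\Sigma=\bigcup_t\Pi_t$ is the correct picture. However, the two ``main obstacles'' you identify are not actually obstacles, and misunderstanding this is the real gap in the plan. The decoupling estimate you want for $\Sigma$ is precisely the $\ell^2$-decoupling for the moment curve with tangent $k$-parallelepipeds (Theorem~1.4 above, due to Bourgain--Demeter--Guth), which holds for \emph{every} $1\le k\le n$, not only the extreme cases. The sets $U_I$ of dimensions $|I|\times|I|^2\times\cdots\times|I|^k$ with sides along $\gamma'(c_I),\dots,\gamma^{(k)}(c_I)$ are exactly anisotropic thickenings of the fibers $\Pi_t$ over $I$, so decoupling over the $U_I$ \emph{is} decoupling for the ruled bundle $\Sigma$; no ``small-cap'' or new multilinear variant is needed, and the paper also notes the moment-curve statement transfers to any curve with $\det(\gamma^{(1)},\dots,\gamma^{(n)})\ne 0$. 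Likewise, the $\varepsilon$-removal is a non-issue here: because the target statement is $\dim_H P_t(A)\ge s$ for \emph{every} $s<\min(m,\dim_H A)$, the $\delta^{-\varepsilon}$ losses at each scale are absorbed into the strict inequality and cost nothing at the Hausdorff-dimension level.

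One further discrepancy worth noting: you open with the classical Kaufman/Marstrand potential-theoretic reduction (finite $s$-energy, Plancherel), whereas the account in the present paper says Gan, Guo, Guth, Harris, Maldague and Wang ``first converted the original problem about Hausdorff dimension into trying to prove an $L^p$ inequality about sum of indicator of parallelepipeds'' and then split the Fourier supports into regions handled either directly or by decoupling. The $L^2$-energy route and the direct $L^p$ Kakeya-type discretization are different reductions; the potential-theoretic one gives an $L^2$-square-function bound which, passed through $TT^*$ as you sketch, is not obviously equivalent to the $L^p$ estimate they actually prove, and one has to be careful that the $L^2$ route alone recovers the full dimension range $s<m$. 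You should either commit to the $L^p$ Kakeya framing (as in \cite{plane} and in Section~5 of the present paper) or verify that the energy route, combined with the moment-curve decoupling, closes at the sharp exponent.
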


Their method actually proves a stronger statement. Their method shows that if $A\subset\mathbb{R}^n$ is Borel measurable and its Hausdorff dimension is greater than $a$ for some $a>0$. Let $s\in[0,m)$, define $E=\{t\in[0,1]|\,\dim_H(P_t(A))<s\}$, then $\dim_H E\leq 1+\frac{s-a}{n-m}$. However we will not be concerned with stronger estimates like this here.

The proof of theorem \ref{projection} uses the following result in harmonic analysis: 

\begin{theorem}[Decoupling inequality for the moment curve \cite{moment}]
Let $\gamma:[0,1]\rightarrow\mathbb{R}^n$ be the smooth curve given by $\gamma(t)=(t,t^2,...,t^n)$ and $1\leq k\leq n$. Let $\delta>0$ and $P(\delta)$ be a partition of $[0,1]$ into disjoint intervals of length $\delta$. For each interval $I$, we let $c_I$ be the center of $I$, let $U_I$ be the parallelpiped of dimensions $|I|\times...\times|I|^k$ whose center if $\gamma(c_I)$ and sides parallel to $\partial^1(\gamma)(c_I),...,\partial^k(\gamma)(c_I)$. Then if for each $I$, $f_I$ is a smooth function with Fourier support in $U_I$, then for $p\in [2,k(k+1)]$, then for any $\epsilon>0$, there exists a constant $C_\epsilon$ independent of the $f_I$'s, such that $$\Big|\Big|\sum_{I\in P(\delta)} f_I\Big|\Big|_{L^p}\leq C_\epsilon\delta^{-\epsilon}\bigg(\sum_{I\in P(\delta)}||f_I||_{L^p}^2\bigg)^{1/2}$$
\end{theorem}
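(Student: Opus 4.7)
I would follow the induction-on-scales approach of Bourgain--Demeter--Guth, defining $D_p(\delta)$ to be the best constant in the displayed inequality and seeking to bootstrap an a priori polynomial bound $D_p(\delta)\leq \delta^{-\alpha}$ to successively smaller $\alpha$ until only a $\delta^{-\epsilon}$ loss remains.

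The first step is a broad--narrow decomposition on each physical ball. On \emph{narrow} cells, where a few $f_I$ dominate and the sum concentrates on a neighborhood of a lower-dimensional subvariety, one applies the inductive hypothesis for a moment curve of smaller degree $k'<k$ (the base case $k=1$ being trivial $L^2$ orthogonality after Plancherel). On \emph{broad} cells one reduces to a $k$-linear decoupling inequality in which the chosen intervals $I_1,\dots,I_k$ are quantitatively transversal, by virtue of the nondegeneracy $\det(\gamma',\dots,\gamma^{(k)})\neq 0$ along the moment curve.

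The $k$-linear estimate is then proved via the multilinear Kakeya / Brascamp--Lieb inequality of Bennett--Carbery--Tao, applied after a wave-packet decomposition of each $f_{I_j}$ into tubes parallel to the frame $\gamma'(c_{I_j}),\dots,\gamma^{(k)}(c_{I_j})$. Transversality of these frames controls the multilinear Kakeya constant, and combining with $L^2$ orthogonality (from near-disjointness of the $U_I$'s) yields a multilinear decoupling with a favorable constant. The heart of the proof is then a ball inflation lemma, which compares $L^p$-averages of $\sum_I |f_I|^2$ at nested physical scales, effectively trading spatial for frequency scale. Iterating ball inflation against the multilinear estimate and the induction hypothesis at coarser scales improves the bound on $D_p(\delta)$ at each iteration; passing to the limit removes all power losses.

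The main obstacle is engineering the nested inductions --- over $\delta$, over the degree $k'\leq k$, and over the exponent $\alpha$ --- so that they close together, and proving the ball inflation lemma uniformly across the sharp range $p\in[2,k(k+1)]$. The exponent $p=k(k+1)$ is critical because at this value the gain from multilinear Kakeya and the iterative losses from Hölder/pigeonholing balance exactly, so the argument has no slack at the endpoint and any weakening of the multilinear input would break the bootstrap.
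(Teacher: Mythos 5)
This theorem is quoted in the paper as a black-box input from the reference \cite{moment}; the paper itself gives no proof, only a one-sentence remark that the same conclusion holds for any nondegenerate curve. So there is nothing in the manuscript to compare your argument against. What can be said is whether your sketch correctly outlines the external argument.

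Your outline is a faithful summary of the Bourgain--Demeter--Guth scheme: induction on scales on the optimal constant, broad--narrow dichotomy at scale $1/K$, reduction of the broad part to a $k$-linear (transversal) decoupling, proof of the multilinear estimate via multilinear Kakeya / Brascamp--Lieb after a wave-packet decomposition, and the ball-inflation iteration that bootstraps a trivial polynomial loss down to $\delta^{-\epsilon}$. Identifying $k=1$ with $L^2$ orthogonality as the base case, and $p=k(k+1)$ as the critical exponent where all the iterated losses balance, is also correct.

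Two points you should be aware of. First, the statement is for $1\le k\le n$: when $k<n$ the boxes $U_I$ are $k$-dimensional affine slabs sitting in $\mathbb{R}^n$ (they are spanned only by $\gamma'(c_I),\dots,\gamma^{(k)}(c_I)$, and implicitly extend freely, or with unit length, in the remaining $n-k$ directions). Your proof proposal tacitly works with the full-dimensional case $k=n$; to cover $k<n$ one needs a preliminary cylindrical reduction (Minkowski/Fubini in the extra $n-k$ frequency variables, together with affine invariance) so that one may apply the degree-$k$ moment curve decoupling in $\mathbb{R}^k$. That step is not difficult, but it is not free and should be stated. Second, the narrow contribution uses decoupling for a \emph{lower-degree} moment curve together with a rescaling/parabolic self-similarity step (the ``lower-dimensional decoupling'' lemma of BDG); your phrase ``inductive hypothesis for a moment curve of smaller degree'' is the right idea, but in the actual argument one must verify that the frequency blocks for the low-degree curve genuinely arise after the affine rescaling, which is where the nondegeneracy $\det(\gamma',\dots,\gamma^{(k)})\ne 0$ is used. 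These are fill-ins rather than errors, and your high-level structure is the standard one.
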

\noindent We comment that a similar result holds for any smooth curve $\gamma:[0,1]\rightarrow\mathbb{R}^n$ such that $$\det|\gamma^{(1)}(t),...,\gamma^{(n)}(t)|>0$$ for all $t.$

In summary, Gan, Guo, Guth, Harris, Maldague and Wang first converted the original problem about Hausdorff dimension into trying to prove an $L^p$ inequality about sum of indicator of parallelpipeds, then they divided up the support of the Fourier transform of those indicator functions into different regions. Each region can either be estimated directly, or be estimated using the decoupling inequality. Due to the form of the decoupling inequality from above, Gan, Guo and Wang only considered families of subspaces of the form $\text{span}(\gamma,...,\gamma^{(k)})$.

In this paper, we ask the following question, given a family of linear maps $P_t:\mathbb{R}^n\rightarrow\mathbb{R}^m, t\in[0,1]$, $n\geq m$, when is it true that if $A\subset\mathbb{R}^n$ is Borel, we have $\dim_H(P_t(A))=\min(\dim_H A,m)$ for almost every $t\in[0,1]$? We make the following conjecture, which essentially says if the restricted Marstrand's theorem does not hold, then there must be counterexamples among subspaces:

\begin{conjecture}\label{1c}
    Suppose $P_t:\mathbb{R}^n\rightarrow\mathbb{R}^m$ is an analytic family of linear maps, in the sense that its coordinate functions are analytic functions of $t$, and assume that for any subspace $\pi$ of $\mathbb{R}^n$, $\dim P_t(\pi)=\min(\dim\pi,m)$ for all but finitely many $t\in[0,1]$. We have $\dim_H P_t(A)=\min(\dim_H A,m)$ for almost every $t$ whenever $A\subset\mathbb{R}^n$ is Borel.
\end{conjecture}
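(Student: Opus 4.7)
Following the strategy announced in the abstract, I would proceed in three stages: (a) prove a polynomial Wolff axiom for the family of $(n-m)$-dimensional $\delta$-plates whose codirections lie on the analytic curve $\Gamma:t\mapsto \ker P_t$ in $G(n,n-m)$; (b) apply polynomial partitioning to upgrade this to an $L^p$ Kakeya inequality for sums of indicators of such plates; (c) discretize to convert the Kakeya inequality into the Hausdorff dimension bound in the conjecture.

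\emph{Stage (a): polynomial Wolff axiom.} The hypothesis on $P_t$, translated to the Grassmannian picture, says that the analytic curve $\Gamma$ meets every Schubert variety
\[
\Sigma_\pi:=\{V\in G(n,n-m):\dim(V\cap\pi)>\max(0,\dim\pi-m)\},\qquad\pi\subset\RR^n,
\]
in only finitely many points. Analyticity plus a {\L}ojasiewicz inequality upgrades this qualitative fact to a quantitative transversality statement
\[
\bigl|\{t\in[0,1]:\ker P_t\in N_\delta(\Sigma_\pi)\}\bigr|\lesssim C_\pi\,\delta^{\alpha},
\]
for some $\alpha>0$ independent of $\pi$, with $C_\pi$ depending polynomially on the defining data of $\pi$. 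Iterating this over the affine slices of a real algebraic variety $Z\subset\RR^n$ of degree $\le D$ should yield a Wolff-type bound
\[
\#\{j:T_j(t_j)\subset N_\delta(Z)\}\lesssim_\eps D^{O(1)}\,\delta^{-\beta-\eps}
\]
for any $\delta$-separated family $t_j\in[0,1]$, with the exponent $\beta$ prescribed by dimension counting.

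\emph{Stage (b): polynomial partitioning.} With the Wolff axiom in hand, a standard polynomial partitioning induction, as in \cite{plane} and \cite{projection}, should yield the required $L^p$ Kakeya inequality for $\sum_j\chi_{T_j(t)}$. At each scale one picks a polynomial $\mathcal{P}$ of degree $D$ whose zero set balances tube mass across $\sim D^n$ cells, and splits into the cellular case (handled by rescaling and induction on scale) and the algebraic case (a constant fraction of the tubes lies in $N_\delta(Z(\mathcal{P}))$, at which point stage (a) closes the estimate).

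\emph{Stage (c): discretization; main obstacle.} Given Borel $A\subset[0,1]^n$ with $\dim_H A>s$ and any $s'<\min(s,m)$, a Frostman measure on $A$ produces a $\delta$-separated set $\{x_j\}\subset\supp\mu$ with associated plates $T_j(t):=P_t^{-1}(B(P_t(x_j),\delta))\cap[0,1]^n$, and a standard potential/pigeonhole argument converts the $L^p$ Kakeya inequality of stage (b), applied to Frostman-weighted indicators, into the bound $\dim_H P_t(A)\ge s'$ for a.e.~$t$; letting $s'\to m$ and $s\to\dim_H A$ yields the conjecture. The main obstacle is stage (a): the hypothesis is qualitative (finitely many bad $t$ for each fixed $\pi$), while the polynomial Wolff axiom demands polynomial-in-$D$ quantitative control of the {\L}ojasiewicz constants $\alpha,C_\pi$, uniformly over \emph{all} subspaces $\pi$ arising from slicing a degree-$D$ variety. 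I expect extracting such uniform control to require a Wronskian or resultant-type analysis of the coordinate functions of $P_t$ rather than a black-box {\L}ojasiewicz bound.
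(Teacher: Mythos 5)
The statement you are attempting is labeled a \emph{conjecture} in the paper, and it is genuinely open: the author writes explicitly that ``we are unable to prove the full conjecture above due to inefficiencies in the polynomial partitioning argument.'' What the paper actually proves is Theorem \ref{Main}, which is a doubly partial result: it is restricted to $n=4$, $m=2$, with $\Pi_t$ spanned by vectors whose coordinate functions are \emph{polynomial} (not merely analytic), it assumes $\dim_H A = 2$, and it concludes only that $\dim_H(P_t(A)) \geq 1+\frac{1}{N}$ for a large $N$ depending on the degrees, not $\min(\dim_H A, m)=2$. So there is no proof of Conjecture \ref{1c} in the paper for your proposal to be checked against; the most that can be done is to compare your sketch with the argument for the partial result.

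At that level, your architecture (polynomial Wolff axiom $\to$ polynomial partitioning $\to$ $L^p$ Kakeya bound $\to$ discretization) does mirror Sections 2--5 of the paper, but your Stage (a) diverges from what the paper does and introduces difficulties you do not resolve. The paper does \emph{not} argue via a {\L}ojasiewicz inequality on the Grassmannian. It follows the Katz--Rogers route: Lemma \ref{integral} rewrites the Jacobian determinant of the curve map as a polynomial $\sum_h (c_h\cdot K_I)t^h$ whose coefficients are linear in the Pl\"ucker coordinates $K_I$, then uses compactness of the Pl\"ucker variety together with the hypothesis (no subspace drops rank for all $t$) to get a uniform lower bound $\int_\alpha^{\alpha+\lambda}\lvert\det(\cdots)\rvert\,dt\gtrsim\lambda^N$. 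This argument leans essentially on polynomiality via Lemma \ref{lower}, which is precisely why Theorem \ref{Main} assumes polynomial rather than analytic $b_i$; your proposal, which tries to work directly with analytic families, does not have a substitute for this step. Moreover, even where the paper's method works, the exponent $N$ produces a $\lambda^{-N}$ loss in the Wolff axiom (Propositions \ref{firstone}, \ref{weak}), which propagates through the multilinear Kakeya inequality (Lemma \ref{secondone}) and the broad--narrow reduction (Proposition \ref{thirdone}); that loss is the ``inefficiency'' that caps the conclusion at $1+\frac{1}{N}$. You correctly flag that extracting uniform quantitative control over all subspaces $\pi$ is a serious obstruction in Stage (a), but you do not address the further, independent gap: even granting some quantitative Wolff axiom, Stages (b) and (c) as sketched would at best reproduce the paper's lossy Kakeya bound, and getting from there to the lossless $\min(\dim_H A,m)$ of the conjecture is the genuinely open curved-Kakeya problem, which the paper itself says would require new ideas along the lines of Wang--Zahl.
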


A more general version of the above conjecture is:

\begin{conjecture}\label{2c}
Suppose $P_t:\mathbb{R}^n\rightarrow\mathbb{R}^m$ is an analytic family of linear maps. Let $f:[0,n]\rightarrow\mathbb{R}$ be defined as follows. If $x\in[0,n]$ is an integer, $f(x)$ is the largest integer such that for any subspace $A$ of $\mathbb{R}^n$ with $\dim A=x$, $\dim P_t(A)\geq f(x)$ for all but finitely many $t\in[0,1]$. If $x$ is not an integer, $f$ is defined to be linear on the interval $[\lfloor x \rfloor, \lfloor x \rfloor+1]$. We conjecture that if $A\subset\mathbb{R}^n$ is Borel, then $\dim_H P_t(A)\geq f(\dim_H A)$ for almost every $t$.
\end{conjecture}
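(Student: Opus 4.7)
My plan follows the roadmap outlined in the abstract: reduce the Hausdorff-dimension statement to an $L^p$ Kakeya-type inequality via Frostman's lemma and pigeonholing, then prove the Kakeya inequality by combining a polynomial Wolff axiom with polynomial partitioning. Concretely, suppose $\dim_H A > \alpha$ and set $\beta = f(\alpha) - \varepsilon$. By Frostman's lemma $A$ supports a probability measure $\mu$ with $\mu(B(x,r)) \lesssim r^\alpha$, and to conclude $\dim_H P_t(A) \geq \beta$ for a.e.\ $t$ it suffices to bound the averaged $\beta$-energy $\int_0^1 \int |P_t(x)-P_t(y)|^{-\beta}\, d\mu(x)\, d\mu(y)\, dt$. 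After dyadic decomposition in the scale and duality, this reduces to an $L^p$ estimate of the form $\|\sum_T \mathbf{1}_T\|_p^p \lesssim \delta^{-\eta}$, where $T$ ranges over an essentially disjoint family of $(n-m)$-dimensional $\delta$-tubes obtained by thickening the fibers $P_t^{-1}(y)$, and the exponent $p$ is read off from the pair $(\alpha, f(\alpha))$.

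The algebraic-geometric core is to establish a polynomial Wolff axiom tailored to this family of tubes. Specifically, I aim to prove: for any real algebraic variety $Z \subset \mathbb{R}^n$ of degree at most $D$ and dimension $k$, the number of tubes $T_{t,y}$ lying in the $\delta$-neighborhood of $Z$ is at most $D^{O(1)} \delta^{-g(k)}$, with an exponent $g(k)$ extracted from the piecewise-linear function $f$. The analyticity of $P_t$ lets me parametrize the fibers as an analytic family of $(n-m)$-dimensional affine subspaces, and the hypothesis on $f$ enters as follows: if too many tubes were to lie near a low-dimensional variety, then a compactness argument on the appropriate Grassmannian would extract a subspace $\pi \subset \mathbb{R}^n$ for which $\dim P_t(\pi) < f(\dim \pi)$ on a set of $t$ of positive measure. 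By analyticity this forces a strict drop for \emph{all} $t$, contradicting the very definition of $f$.

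With the Wolff axiom in hand, I would apply Guth-style polynomial partitioning to the tube arrangement: tubes disjoint from the zero set of the partitioning polynomial distribute across the cells and are handled inductively on the number of tubes, while tubes lying close to the zero set are controlled by the Wolff axiom. Iterating in the standard way produces the targeted $L^p$ Kakeya inequality, and a final dyadic discretization in $\delta$ together with a pigeonhole over the level sets of the projected measure yields $\dim_H P_t(A) \geq f(\dim_H A)$ almost everywhere. I expect the main obstacle to be the polynomial Wolff axiom itself: in the restricted projection theorems of Gan, Guo, Guth, Harris, Maldague, and Wang the analogous estimate exploited the explicit algebraic structure of the moment curve, whereas here one has only the qualitative ``all but finitely many $t$'' hypothesis on a general analytic family. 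Converting this into a quantitative algebraic input uniform over all $k$-dimensional subspaces of $\mathbb{R}^n$, and controlling the rate at which $\dim P_t(\pi)$ can collapse as $t$ approaches an exceptional value, is where the bulk of the technical work will concentrate.
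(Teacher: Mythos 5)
This statement is a \emph{conjecture} in the paper, not a theorem, and the paper contains no proof of it. The author explicitly states that even the more restricted Conjecture~1.6 is out of reach, and Theorem~\ref{Main} is a weak partial result only in the special case $n=4$, $m=2$, only for polynomial (not analytic) families, and obtaining only $\dim_H P_t(A) \geq 1+\epsilon$ for a small $\epsilon$ rather than the conjectured value $2$. So while your roadmap closely mirrors the strategy the paper uses for that weak partial result, it does not constitute a proof and cannot be assessed as one.

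The concrete gap is the polynomial Wolff axiom you invoke. You assert a bound of the form $D^{O(1)}\delta^{-g(k)}$ with $g(k)$ ``extracted from $f$,'' and you note that converting the qualitative ``all but finitely many $t$'' hypothesis into a quantitative estimate is ``where the bulk of the technical work will concentrate'' --- but this is precisely the open problem, and the paper's attempt at it produces losses that are fatal for the sharp conjectured exponent. The paper's Proposition~2.8 carries a factor $\lambda^{-N}$ in the tube length $\lambda$, where $N$ is a large integer coming from Lemma~2.1; that $N$ is controlled only by the degrees of the defining polynomials and can be arbitrarily large. This $\lambda^{-N}$ loss propagates through the grains decomposition and the broad--narrow argument to yield only $1+\frac{1}{N}$ at the end. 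The compactness argument on the Grassmannian that you describe --- too many tubes near a low-dimensional variety would force $\dim P_t(\pi) < f(\dim\pi)$ for a degenerate subspace $\pi$ --- is exactly what the paper's Lemma~2.1 does, but this gives only a qualitative non-vanishing statement; making it quantitative forces one to control the rate at which $\dim P_t(\pi)$ collapses as $t$ approaches an exceptional value, and it is this rate that produces the damaging exponent $N$. You gesture at the difficulty but give no mechanism to overcome it, so the proposal inherits the same inefficiency that blocks the paper from proving the conjecture. Additionally, the Guth--Zahl polynomial partitioning scheme you propose has its own known inefficiencies in three dimensions, which the paper flags explicitly and which would further erode the exponent; the paper suggests that the recent resolution of the three-dimensional Kakeya conjecture by Wang and Zahl might help, but adapting it to curved tubes is acknowledged to be nontrivial and is not done.
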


For example, consider the linear maps $P_t:\mathbb{R}^{2n}\mapsto \mathbb{R}^n$ given by $P_t((x_1,y_1,...,x_n,y_n))=(x_1+ty_1,...,x_n+ty_n)$, then we can prove that if $A$ is Borel, $\dim_H(P_t(A))\geq f(\dim_H A)$ for almost every $t$. Here $f(x)=\lfloor \frac{x+1}{2}\rfloor$ if $x$ is an integer, and $f$ is linear between integers. We will be focusing on the less general conjecture \ref{1c} in this paper.

We give the following partial progress, which is the main result of the paper:

\begin{theorem}\label{Main}
Suppose $\Pi_t, t\in[0,1]$ is a family of two dimensional subspaces of $\mathbb{R}^4$ and let $P_t:\mathbb{R}^4\rightarrow\Pi_t$ be the orthogonal projection onto $\Pi_t$. Furthermore assume that we have $b_1(t), b_2(t): [0,1]\rightarrow\mathbb{R}^4$ such that $\Pi_t=\text{span }(b_1(t),b_2(t))$ and each coordinate function of $b_i$ is a polynomial in $t$. Furthermore, assume that for any subspace $\pi$ of $\mathbb{R}^4$, $\dim P_t(\pi)=\min(\dim\pi,2)$ for all but finitely many $t\in[0,1]$. Then there exists a constant $\epsilon>0$ depending only on $P_t$, such that $\dim_H(P_t(A))>1+\epsilon$ for almost all $t$ whenever $A$ is a Borel subset of $\mathbb{R}^4$ and $\dim_H A=2$. Here, because of the inefficiency of the argument, $\epsilon$ is a positive constant that is small if the degrees of the coordinate functions of $b_1(t)$ and $b_2(t)$ are large.

\end{theorem}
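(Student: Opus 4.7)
The plan is to follow the three-step strategy telegraphed in the abstract: prove a polynomial Wolff axiom tailored to the 2-plane family $\{P_t^{-1}(v)\}$, bootstrap it via polynomial partitioning into an $L^p$ Kakeya inequality, and then discretize to extract the Hausdorff dimension bound.

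I would start with the discretization/reduction. By a standard Frostman and pigeonhole argument, it suffices to fix a small $\delta$ and a $\delta$-separated $(\delta,2)$-set $E \subset A$ of cardinality $\sim \delta^{-2}$, and show that for all but a $\delta^c$-measure set of parameters $t$, $P_t(E)$ cannot be covered by fewer than $\delta^{-1-\epsilon}$ intervals of length $\delta$. The preimages $P_t^{-1}(v)$ are 2-dimensional affine subspaces of $\mathbb{R}^4$, so their $\delta$-neighborhoods $T_{t,v}$ form a family of thin algebraic slabs parameterized by $(t,v)$, and the target becomes an $L^p$ Kakeya-type statement of the form
$$ \Big\| \sum_{(t,v)} \mathbf{1}_{T_{t,v}} \Big\|_{L^p(B_1)} \leq C_\epsilon \delta^{-\epsilon}\, (\#\text{slabs})^{1/p}\, \delta^{\alpha(p)} $$
for some $p$ strictly larger than the trivial exponent; via a standard dualization this forces the projections $P_t(E)$ to be spread out for most $t$.

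Next I would establish the polynomial Wolff axiom. The hypothesis that $b_1(t), b_2(t)$ have polynomial coordinate functions and that $\dim P_t(\pi) = \min(\dim\pi, 2)$ off a finite set translates, via resultants and Bezout-type estimates applied to the polynomial system defining $b_1, b_2$, into a quantitative statement that the Gauss map $t \mapsto \Pi_t \in G(4,2)$ is algebraically non-degenerate: only $O_D(1)$ values of $t$ can force $\Pi_t$ to lie in a specified one-parameter algebraic subfamily, with constants controlled by the degree of $b_1, b_2$. From this I would derive a Guth--Zahl style bound showing that at most $C(D)\,\delta^{-1-\eta}$ of the slabs $T_{t,v}$ can be essentially contained in the $\delta$-neighborhood of an algebraic variety of degree $D$ and dimension at most $3$. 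This is where the degree of $b_1,b_2$ enters the final constants, accounting for the degradation of $\epsilon$ with the degree.

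With the Wolff axiom in hand, I would run Guth-style polynomial partitioning on the unit ball in $\mathbb{R}^4$: a polynomial of degree $D$ partitions $B_1$ into $\sim D^4$ cells carrying equal $L^p$-mass, and the total mass decomposes into a cellular contribution, which I would handle by induction on the scale $\delta$ (with a broad/narrow split to keep track of transverse interactions of slabs), and an algebraic contribution concentrated near the zero set, which I would control by the Wolff axiom from the previous step. Optimizing $D$ at each scale and iterating yields the desired $L^p$ Kakeya inequality, which combined with the discretization of the first step produces the strict gain $\dim_H P_t(A) > 1+\epsilon$. The main obstacle will be the polynomial Wolff axiom: unlike the classical Kakeya setting where tubes are lines with directions on a curve in $S^{n-1}$, here the tubes are $2$-planes whose directions lie on an algebraic curve in the four-dimensional Grassmannian $G(4,2)$, so controlling how a one-parameter algebraic family of $2$-planes sits inside a degree $D$ variety is a genuinely higher-dimensional incidence problem, and producing constants polynomial in $D$ (as required for polynomial partitioning to close) is the crux.
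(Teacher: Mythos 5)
Your high-level outline --- polynomial Wolff axiom, then polynomial partitioning, then discretization --- matches the abstract's road map, but there is a substantial divergence in the geometric object you propose to work with, and that divergence is where your proposal would run into trouble.

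You propose to work with the $\delta$-neighborhoods of the $2$-dimensional affine fibers $P_t^{-1}(v)$, i.e., with a three-parameter family of $1\times 1\times\delta\times\delta$ slabs (planks) inside $\mathbb{R}^4$, and to prove a Kakeya-type $L^p$ inequality for those slabs. The paper instead performs a dualization that you do not mention: it replaces each point $a\in A$ by the curve $c_a(t)=(t,P_t(a))\subset\mathbb{R}^3$, and reduces the projection theorem to a \emph{curved Kakeya problem for $1$-dimensional tubes in $\mathbb{R}^3$}. This reduction of the ambient dimension from $4$ to $3$ and of the object's dimension from $2$ to $1$ is the essential move that makes the rest of the argument go through, because all the technology the paper then uses --- the Katz--Rogers polynomial Wolff axiom (Proposition~\ref{firstone}--\ref{weak}), the Guth--Zahl grains decomposition (Section~3), the multilinear Kakeya inequality with transversality weight $|v_1\wedge v_2|$ (equation~\ref{Multi}), and the Bourgain--Guth broad/narrow induction (Proposition~\ref{thirdone}) --- is built for tubes around curves, not for $2$-dimensional planks. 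In particular, the transversality quantity $|v_1\wedge v_2|$ that powers the multilinear step and the broad/narrow split is the angle between tangent directions of two intersecting curves, and it has no direct analogue in your slab picture.

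Concretely, the gaps you would need to fill in your plank formulation are: a polynomial Wolff axiom for a one-parameter algebraic family of $2$-planes in $\mathbb{R}^4$ (much less studied than the tube case, and susceptible to Heisenberg-type degeneracies unless the non-degeneracy hypothesis is exploited very carefully); a grains decomposition and multilinear Kakeya estimate for planks in $\mathbb{R}^4$ (not off-the-shelf); and a broad/narrow mechanism compatible with $2$-dimensional intersections. None of these are provided by the references the paper draws on. You have effectively proposed to prove the pre-dual, higher-dimensional version of the problem by methods that only exist for the dual, lower-dimensional one. The fix is to first carry out the curve reformulation: send $(x,v)\in A$ to the curve $t\mapsto(t,\Phi(x,v,t))$ in $\mathbb{R}^3$, observe that non-concentration of the projections $P_t(A)$ is exactly a statement about how thinly these $\delta$-tubes can overlap, and then your three-step plan maps cleanly onto the paper's Sections~2--5.
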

\begin{comment}
It is not hard to see that orthogonal projection onto $\mathrm{span}(b_1(t),b_2(t))$ and the linear map $\mathbb{R}^4\rightarrow\mathbb{R}^2$ given by $x\mapsto (x\cdot b_1(t),x\cdot b_2(t))$ are releated by a fixed linear transformation. Therefore, the Hausdorff dimensions of a Borel subset $A\subset\mathbb{R}^4$ under the orthogonal projection $P_t$ and under $x\mapsto(x\cdot b_1(t),x\cdot b_2(t))$ are identical.
\end{comment}
\begin{example}
We can take $P_t$ to be the orthogonal projection onto the span of $[1,0,t,t^2]$ and $[0,1,t^2,-t^3]$. A direct computation shows that this case satisfies the assumptions of theorem \ref{Main} but is not covered by theorem \ref{projection}. Later we show see that for every Borel $A\subset\mathbb{R}^4$, we have $\dim_H P_t(A)\geq 1+\frac{1}{5}$ for almost every $t\in[0,1].$
\end{example}
\begin{lemma} \label{reduction}
Under the assumptions of theorem \ref{Main}, suppose $\Pi_t=\text{span }(\gamma_1(t),\gamma_2(t))$ such that all coordinate functions of $\gamma_i$ are analytic functions. We may assume that  
\[
\det([\gamma_1(t), \gamma_2(t), \gamma_1'(t), \gamma_2'(t)]) \neq 0
\]
for all $t \in [0,1]$.

\end{lemma}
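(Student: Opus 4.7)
The plan is to argue, via analyticity of the determinant $D(t) := \det([\gamma_1(t), \gamma_2(t), \gamma_1'(t), \gamma_2'(t)])$, that either $D$ has at most finitely many zeros in $[0,1]$ --- in which case a routine reduction applies --- or $D \equiv 0$ on $[0,1]$, which I will rule out by deriving a contradiction with the projection hypothesis of Theorem \ref{Main}.

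First I would verify that $D$ depends on the family $\{\Pi_t\}$ only up to a nowhere-vanishing factor. For an analytic change of basis $[\tilde\gamma_1, \tilde\gamma_2] = [\gamma_1, \gamma_2] M(t)$ with $M(t) \in \mathrm{GL}_2$, a block computation gives
\[
[\tilde\gamma_1, \tilde\gamma_2, \tilde\gamma_1', \tilde\gamma_2'] = [\gamma_1, \gamma_2, \gamma_1', \gamma_2'] \begin{pmatrix} M(t) & M'(t) \\ 0 & M(t) \end{pmatrix},
\]
and hence $\tilde D(t) = (\det M(t))^2\, D(t)$. Thus the zero locus of $D$ is intrinsic to $\{\Pi_t\}$, and we may work with any convenient analytic basis (in particular the polynomial basis supplied by Theorem \ref{Main}). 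Since $D$ is then analytic on $[0,1]$, either $D \equiv 0$ or the zero set $F$ of $D$ is finite. In the finite case, removing $F$ is permissible because the conclusion of Theorem \ref{Main} is for almost every $t$, and affinely reparametrizing any one of the connected components of $[0,1]\setminus F$ back to $[0,1]$ yields a new analytic family on $[0,1]$ with $D$ nowhere vanishing.

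To rule out $D \equiv 0$, I plan to show this forces one of three structural degeneracies: (A) $\Pi_t$ is constant; (B) $\Pi_t$ contains a fixed line $L \subset \mathbb{R}^4$ for all $t$; or (C) $\Pi_t$ is contained in a fixed hyperplane $W \subset \mathbb{R}^4$ for all $t$. Each of these contradicts the hypothesis: (A) gives the counterexample $\pi = \Pi^\perp$ with $P_t(\pi) = 0$ for all $t$; (B) gives the $3$-dimensional counterexample $\pi = L^\perp$, which contains $\Pi_t^\perp$, so $\dim P_t(\pi) \leq 1 < 2$; (C) gives the $1$-dimensional counterexample $\pi = W^\perp \subset \Pi_t^\perp$, so $P_t(\pi) = 0$. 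To establish the trichotomy, I would note that $D \equiv 0$ is equivalent to $\dim V_t \leq 3$ where $V_t := \Pi_t + \text{span}(\gamma_1'(t), \gamma_2'(t))$. The case $\dim V_t = 2$ forces $\gamma_i'(t) \in \Pi_t$, whereupon solving the resulting linear ODE $[\gamma_1', \gamma_2'] = [\gamma_1, \gamma_2] K(t)$ yields a transition-matrix solution $[\gamma_1(t), \gamma_2(t)] = [\gamma_1(0), \gamma_2(0)] \Phi(t)$ and so case (A). In the case $\dim V_t = 3$ at generic $t$, I would pick an analytic unit section $n(t) \in V_t^\perp$; differentiating $n(t) \cdot \gamma_i(t) = 0$ and using $n(t) \cdot \gamma_i'(t) = 0$ forces $n'(t) \in \Pi_t^\perp$, after which a Pl\"ucker analysis of $\sigma(t) := \gamma_1(t) \wedge \gamma_2(t)$ on the Klein quadric in $\Lambda^2 \mathbb{R}^4$ --- noting that $\sigma'(t) \wedge \sigma'(t) = -2 D(t)\, e_{1234} = 0$ under $D \equiv 0$ --- pins $\sigma$ inside a maximal isotropic $3$-plane of $(\Lambda^2 \mathbb{R}^4, \wedge)$, which is either of type $\alpha$ (planes through a fixed line, giving (B)) or type $\beta$ (planes in a fixed hyperplane, giving (C)).

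The main obstacle will be the last part of the trichotomy: rigorously forcing the analytic curve $\sigma(t)$ to actually lie in a single maximal isotropic subspace of $\Lambda^2 \mathbb{R}^4$, rather than merely being tangent to the rank-one stratum at each point. The delicate subcase is when the derivative $d\Pi_t \colon \Pi_t \to \Pi_t^\perp$ has rank exactly one at generic $t$ but neither its kernel nor its image is constant; here the Pl\"ucker/Klein-quadric structure and the analyticity of $\sigma$ have to be combined carefully to conclude that one of (B) or (C) holds.
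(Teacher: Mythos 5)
Your dichotomy and your handling of the finitely-many-zeros case match the paper exactly: restrict to a subinterval where $D$ is nonvanishing and reparametrize, which is permissible because the conclusion is almost-everywhere in $t$. For $D \equiv 0$ you take a genuinely different route (Pl\"ucker embedding and the Klein quadric in $\Lambda^2\mathbb{R}^4$, versus the paper's elementary analytic-relation-and-ODE computation), but that route contains a real error, not merely a loose end to tighten: the trichotomy (A)/(B)/(C) is not exhaustive.

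Take $\Pi_t = \text{span}(\gamma(t), \gamma'(t))$ for any nondegenerate analytic curve $\gamma$ in $\mathbb{R}^4$, i.e.\ with $\det(\gamma,\gamma',\gamma'',\gamma''') \neq 0$. Then $\gamma_1 = \gamma$, $\gamma_2 = \gamma'$, $\gamma_1' = \gamma_2$, so $[\gamma_1,\gamma_2,\gamma_1',\gamma_2']$ has a repeated column and $D \equiv 0$; equivalently $\sigma' = \gamma\wedge\gamma''$ is decomposable and $\sigma'\wedge\sigma' = 0$, just as your formula predicts. Yet this family is not constant, contains no fixed line, and lies in no fixed hyperplane --- nondegeneracy of $\gamma$ rules out all three --- and it does not violate the projection hypothesis of Theorem \ref{Main}. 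In your notation this example has $\dim V_t = 3$, the derivative $d\Pi_t\colon\Pi_t\to\Pi_t^\perp$ has rank one with kernel $\text{span}(\gamma(t))$ and image $\text{span}(P_{\Pi_t^\perp}\gamma''(t))$, both genuinely moving in $t$; so it is precisely the ``delicate subcase'' you flag at the end, and the conclusion you hope to force there (that $\sigma$ sits in a maximal isotropic $3$-plane, hence (B) or (C)) is simply false. $\sigma'$ lying pointwise in the quadric cone is a much weaker condition than $\sigma$ being confined to an $\alpha$- or $\beta$-plane. The paper's proof has an extra branch you are missing: when $D\equiv 0$ it first checks whether, on a subinterval, $\Pi_t$ takes the form $\text{span}(\gamma,\gamma')$ for a single curve $\gamma$ --- and in that case the conclusion of Theorem \ref{Main} follows from Theorem \ref{projection}, not from any contradiction --- and only after ruling that out does it derive a contradiction from the remaining degeneracies, which do fall under your (B) and (C). (Your $\dim V_t = 2 \Rightarrow$ (A) step via the linear ODE is correct, and your three contradictions from (A), (B), (C) are correct; the gap is solely that the list is incomplete.)
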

\begin{proof}
The determinant $f(t)=\det(\gamma_1(t),\gamma_2(t),\gamma_1'(t),\gamma_2'(t))$ is an analytic function of $t$, hence it either has finitely many zeros or is constant zero on $[0,1]$. In the first case by redefining the domain of $t$, we may assume $f(t)\not=0$ for all $t\in[0,1]$ by restricting to subintervals of $[0,1]$. In the second case there must exist analytic functions $a(t),b(t),c(t),d(t): [0,1]\rightarrow\mathbb{R}$ of $t$ such that $$a\gamma_1+b\gamma_2=c\gamma_1'+d\gamma_2'$$ and $a(t),b(t),c(t),d(t)$ must not be simultaneously zero for any $t\in[0,1]$. Let $\gamma=c\gamma_1+d\gamma_2$, then $$\text{span}(\gamma,\gamma')=\text{span}(c\gamma_1+d\gamma_2,a\gamma_1+b\gamma_2+c'\gamma_1+d'\gamma_2)$$$$\\=\text{span}(c\gamma_1+d\gamma_2,(a+c')\gamma_1+(b+d')\gamma_2).$$

The function $c(b+d')-d(a+c')$, being an analytic function of $t$ has finitely many zeros or is the constant zero function. In the first case $\text{span}(\gamma,\gamma')=P_t$ for all but finitely many $t\in[0,1]$,  and the conjecture has been reduced to a special case already proven by theorem \ref{projection}. In the second case we have $c(b+d')=d(a+c')$. If $c$ is the constant zero function, then $ad=0$. But $d(t)$ must be nonzero for any $t\in[0,1]$ for otherwise we have $a(t)\gamma_1(t)+b(t)\gamma_2(t)=0$, hence $a(t)=b(t)=c(t)=d(t)=0$ and a contradiction. Hence $a$ is the constant zero function.  This implies $b\gamma_2=d\gamma_2'$, from which we can solve $\gamma_2(t)=\vec{c}\cdot \exp(\frac{b}{d}t)$ for some constant vector $\vec{c}\in\mathbb{R}^4$. The projection of the three dimensional subspace perpendicular to $\vec{c}$ onto $P_t$ is then at most one dimensional for all $t$. This contradicts the assumption. Therefore $c$ is not the constant zero function, so it only has finitely many zeros. Similarly $d$ only has finitely many zeros. By restricting to subintervals of $[0,1]$ if necessary, we can find an analytic function $\lambda$ of $t$
 such that $a+c'=\lambda c$, $b+d'=\lambda d$, hence $$(\lambda c-c')\gamma_1+(\lambda d-d')\gamma_2=c\gamma_1'+d\gamma_2',$$ hence $\lambda (c\gamma_1+d\gamma_2)=(c\gamma_1+d\gamma_2)'$, then $c\gamma_1+d\gamma_2$ is a constant vector times $e^{t\lambda}$, in this case the projection of the three dimensional subspace perpendicular to this constant vector to $P_t$ is at most one dimensional for all $t$. This contradicts the assumption. The above discussion then shows that we may assume $\det(\gamma_1(t),\gamma_2(t),\gamma_1'(t),\gamma_2'(t) )\not=0$ for any $t\in[0,1]$. 
\end{proof}

We are unable to prove the full conjecture above due to inefficiencies in the polynomial partitioning argument at the moment when this was written. However, the recent resolution of three dimensional Kakeya conjecture by Hong Wang and Joshua Zahl in \cite{proofkakeya} might be able to be adapted to our case, provided that their argument for straight tubes can be generalized to curved tubes. However, it is nontrivial to generalize their method to curved tubes, and it might involve making their arguments work for semialgebraic sets instead of convex sets. 

We devote the rest of the paper to proving theorem \ref{Main}. The method we use was first invented in \cite{2021} and used again in \cite{zahl}. To briefly illustrate the idea, let $P_t:\mathbb{R}^n\rightarrow\mathbb{R}^m$ be a family of linear maps and $A\subset\mathbb{R}^n$ be Borel measurable, then consider the collection of curves in $\mathbb{R}^{m+1}$ given by $c_a:[0,1]\mapsto\mathbb{R}^{m+1}, c_a(t)=(t,P_t(a))$ for each $a\in A$, then roughly speaking to prove that $\dim_H P_t(A)=\min(m,\dim_H A)$ for almost every $t$, we only need to show that $\dim_H \bigcup_{a\in A}c_a=\min(m+1,\dim_A+1)$. This problem has the same form as the curved variant of the Kakeya problem, studied for example in \cite{Wisewell}. In \cite{2021} and \cite{zahl}, the authors studied projections from $\mathbb{R}^3$ to $\mathbb{R}$, which correspond to unions of curves in $\mathbb{R}^2$. Because of the low dimension (dimension 2), the authors were able to study tangencies directly.

In theorem \ref{Main}, however, we need to deal with curves that lie in $\mathbb{R}^3$, and we use the polynomial partitioning method first invented by Larry Guth to make progress on the restriction conjecture. We will first adapt an argument by Nets Katz and Keith Rogers in \cite{Katz} to prove a version of the polynomial Wolff axiom, and use the polynomial Partitioning method to convert it to the needed curved Kakeya bound. Similar curved Kakeya problems have also been studied recently in \cite{guo}, where the authors applied the polynomial Wolff axioms for curves to make partial progress on a conjecture involving $L^p$ bounds of certain Hormander operators.

For the rest of the proof, in section 2 we  prove a version of polynomial Wolff axiom, in section 3 we use the polynomial Wolff axiom to derive a multilinear Kakeya inequality, in section 4 we use the well known "broad narrow" method to convert the multilinear Kakeya inequality to a linear Kakeya quality, and in the last section we discuss how to go from linear Kakeya inequality to restricted problem. 

Before we end the introduction, we see a non-example. We take $P_t:t\in[0,1]$ as the orthogonal projection onto the span of $\big((1,0,-2t^2,-2t),(0,1,-2t,0)\big)$. Then the corresponding unions of curve problem is the following: 

Let $A\subset\mathbb{R}^4$ be Borel, for each $x=(x_1,x_2,x_3,x_4)\in A$, we define a curve $\gamma_x:[0,1]\rightarrow\mathbb{R}^3$ by $$\gamma_x(t)=(t,x_1-2t^2x_3-2tx_4,x_2-2tx_3),$$ is it true that the Hausdorff dimension of $\bigcup_{x\in A}\gamma_x$ is $\dim_H A+1$?

The answer is false and a counterexample is provided by Laura Wisewell in \cite{Wisewell}: we can simply pick $A$ to be the subspace given by $x_1=0$ and $x_2=-2x_4$; all curves $\gamma_x$ would then lie on the surface $x=yz$ in $\mathbb{R}^3$. Alternative, we can check the restricted projection problem also fails with $A=\text{span}\big((1,0,0,0),(0,0,0,1)\big)$ as expected.

\textbf{Acknowledgment:} The author is deeply grateful to Hong Wang, who introduced me to this topic and taught me much of what I know. I also want to thank Arian Nadjimzadah for pointing out Laura Wisewell's thesis on the curved Kakeya problem to me.

\vspace{1em}
\textbf{Notations:} In the following paper, we shall use $C>0$ to denote a large constant that depends only on $P_t$, and we use $C(X,Y,...)$ to denote a large constant that depends only on quantities $X,Y,...$. We use $X\lesssim Y$ to denote $X\leq CY$, and we use $X\lessapprox Y$ to denote $X\lesssim (\log \delta^{-1})^C Y.$

\section{Polynomial Wolff Axioms for Curved Tubes}
Our goal this section is to establish a version of the polynomial Wolff axiom for curved tubes. We start with the following lemma:

\begin{lemma}\label{integral}
Let $P_t:\mathbb{R}^n\rightarrow\mathbb{R}^m, t\in[0,1]$ be a family of linear maps given by $$P_t(x,v)=(x\cdot \beta_1(t),...,x\cdot\beta_m(t))+(v\cdot\gamma_1(t),...,v\cdot\gamma_m(t)),$$ where $n=2m$, $x\in\mathbb{R}^m$, $v\in\mathbb{R}^{m}$ and each $\beta_i(t), \gamma_i(t):[0,1]\rightarrow\mathbb{R}^{m}$ is a vector of polynomial functions in $t$. Furthermore, we assume that for every linear subspace $A\subset\mathbb{R}^n$, we have $\dim P_t(A)=\min(m,\dim A)$ for all but finitely many $t\in[0,1].$ Denote the map $$(x,v,t)\mapsto (x\cdot \beta_1(t),...,x\cdot\beta_m(t))+(v\cdot\gamma_1(t),...,v\cdot\gamma_m(t))$$ by $\Phi(x,v,t):\mathbb{R}^{n+1}\mapsto\mathbb{R}^m$. Given a new positively oriented orthonormal basis $e_1,...,e_n$ of $\mathbb{R}^n$, there is a constant $C>0$ such that for every $\alpha\in[0,1]$, $\lambda\in[0,1-\alpha]$ and $m\times m$ matrix $M$, we have

$$\int_\alpha^{\alpha+\lambda}\big|\det(\nabla_{e_1,...,e_{m}}\Phi\cdot M+\nabla_{e_{m+1},...,e_n}\Phi) \big|\,dt\geq C\lambda^N.$$ Here $N\in\mathbb{Z}^+$ depends on the degrees of coordinate functions of $\beta_i$'s and $\gamma_i$'s only, and we adopt the convention that for a function $f:\mathbb{R}^n\rightarrow\mathbb{R}^m$, $\nabla_{e_1,...,e_k}f$ is the $m\times k$ matrix given by $(\nabla_{e_1,...,e_k}f)_{ij}=\nabla_{e_j}f_i$. 
\end{lemma}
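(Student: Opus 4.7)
My plan is to reduce the integral lower bound to a compactness statement on the Grassmannian $G(n,m)$, and then apply a Remez-type inequality to extract a quantitative estimate. Write $A(t) := \nabla_{e_1,\ldots,e_m}\Phi$ and $B(t) := \nabla_{e_{m+1},\ldots,e_n}\Phi$; both are $m\times m$ matrices whose entries are polynomials in $t$ of degree bounded by the degrees of the coordinate functions of the $\beta_i$ and $\gamma_i$. Consequently $p_M(t) := \det(A(t)M+B(t))$ is a polynomial in $t$ of degree at most $D$, for some $D$ depending only on $m$ and on those degrees, and the final exponent in the lemma will be $N = D+1$.

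The key observation is that $p_M(t)$ is, up to a normalizing factor, the Jacobian of $P_t$ restricted to the $m$-dimensional subspace $\pi_M\subset\mathbb{R}^n$ spanned (in the $e_i$-basis) by the columns of $\binom{M}{I_m}$. Taking the orthonormal frame $U_{\pi_M} = \binom{M}{I_m}(I_m+M^T M)^{-1/2}$ from a QR-type decomposition, a direct computation yields
\[
\det\bigl([A(t)\,|\,B(t)]\,U_{\pi_M}\bigr) \;=\; \frac{p_M(t)}{\sqrt{\det(I_m+M^T M)}}.
\]
Since $\det(I_m+M^T M) \geq 1$, it suffices to prove a uniform bound $\int_\alpha^{\alpha+\lambda}|q_\pi(t)|\,dt \geq c\,\lambda^{N}$ for $q_\pi(t) := \det([A(t)|B(t)]\,U_\pi)$, with $c$ independent of $\pi\in G(n,m)$. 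The advantage is that $\pi$ now ranges over a compact space.

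To prove this uniform bound, I expand via the Cauchy--Binet formula. Writing $c_I(t) := \det([A(t)|B(t)]_I)$ for the maximal minors and $\omega_I(\pi) := \det((U_\pi)_I)$ for the Plücker coordinates of $\pi$ (normalized by $\sum_I \omega_I(\pi)^2 = 1$), one has $q_\pi(t) = \sum_{|I|=m} c_I(t)\,\omega_I(\pi)$. The square of the $L^2$ norm of $q_\pi$ on $[0,1]$ then equals the quadratic form $\omega(\pi)^T G\, \omega(\pi)$, where $G$ is the Gram matrix of $\{c_I\}$. The nondegeneracy hypothesis of the lemma is precisely that this positive semidefinite form vanishes at no point of the (compact) Plücker variety, so by compactness it is uniformly bounded below by some $c_0 > 0$. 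Hence $\|q_\pi\|_{L^\infty([0,1])} \geq \|q_\pi\|_{L^2([0,1])} \geq \sqrt{c_0}$, uniformly in $\pi$.

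Finally, I upgrade this $L^\infty$ control on $[0,1]$ to an $L^1$ lower bound on the subinterval $[\alpha,\alpha+\lambda]$. Remez's inequality gives $\|p\|_{L^\infty(I)} \geq (\lambda/4)^D \|p\|_{L^\infty([0,1])}$ for any polynomial of degree $\leq D$ and subinterval $I\subset[0,1]$ of length $\lambda$; combined with the equivalence of $L^1$ and $L^\infty$ on the finite-dimensional space of polynomials of degree $\leq D$ (after rescaling $I$ to $[0,1]$), this yields $\|p\|_{L^1(I)} \gtrsim \lambda\|p\|_{L^\infty(I)}$, hence $\|p\|_{L^1(I)} \gtrsim \lambda^{D+1} \|p\|_{L^\infty([0,1])}$. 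Applying this to $p_M$ gives $\int_\alpha^{\alpha+\lambda}|p_M(t)|\,dt \gtrsim \lambda^{D+1}$ uniformly in $M$. The main subtlety is ensuring uniformity over the non-compact matrix parameter $M$; this is resolved by the Grassmannian compactification, with the scaling factor $\sqrt{\det(I_m+M^T M)} \geq 1$ working in the estimate's favor.
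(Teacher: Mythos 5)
Your proof is correct and gives a genuinely cleaner route to the same endpoint as the paper, though both ultimately rest on the same structural fact: compactness of the Plücker variety together with the nondegeneracy hypothesis. The paper proceeds by contradiction, producing a sequence of $n\times m$ matrices $K^{(i)}$ whose maximal minors would all have to vanish in the limit, invoking a separate lemma that monic polynomials of bounded degree have $L^1$ norm bounded below, and finishing with a wedge-product argument to show the minors cannot all degenerate. Your argument instead compactifies \emph{directly}: you absorb the unbounded matrix parameter $M$ into the Grassmannian by passing to the orthonormal frame $U_{\pi_M}$, observe that $\sqrt{\det(I_m+M^TM)}\geq 1$ works in your favor, and then express $q_\pi(t)$ via Cauchy--Binet as a linear form in normalized Plücker coordinates. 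The positivity of the Gram form $\omega^T G\,\omega$ on the compact Plücker variety is exactly the hypothesis $\dim P_t(\pi)=m$ for all but finitely many $t$, so compactness yields a uniform $L^2$ lower bound on $[0,1]$ with no contradiction argument needed. The Remez inequality combined with the equivalence of $L^1$ and $L^\infty$ on the fixed finite-dimensional space of polynomials of degree $\leq D$ then handles the localization to $[\alpha,\alpha+\lambda]$ in one stroke, giving $N=D+1$. The main thing your approach buys is transparency: the role of the nondegeneracy hypothesis (it is exactly nonvanishing of the Gram form on the Plücker variety) is crisply isolated, and the normalization step replaces the paper's wedge-product contradiction. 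One cosmetic remark: the Plücker coordinate vector $\omega(\pi)$ is only defined up to sign, but the quadratic form $\omega^T G\,\omega$ is invariant under $\omega\mapsto -\omega$, so the compactness argument is unaffected; you may wish to note this explicitly.
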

\begin{proof}
Let $A$ denote the orthogonal matrix $[e_1,...,e_n]^{-1}$ and we write $A=\begin{bmatrix}
    A_1 & A_2\\
    A_3 & A_4
\end{bmatrix}$, where each $A_i$ is a square matrix of dimension $m\times m$. Define the square matrix $\beta$ by $\beta:=\left[\begin{smallmatrix}
    \beta_1\\
    \beta_2\\
    ...\\
    \beta_m
\end{smallmatrix}\right]$ and $\gamma$ by $\gamma:=\left[\begin{smallmatrix}
    \gamma_1\\
    \gamma_2\\
    ...\\
    \gamma_m
\end{smallmatrix}\right]$; we can calculate by change of variable:

$$\int_\alpha^{\alpha+\lambda}\big|\det(\nabla_{e_1,...,e_{m}}\Phi\cdot M+\nabla_{e_{m+1},...,e_n}\Phi) \big|\,dt=\int_\alpha^{\alpha+\lambda} \big|\det\big(\beta(A_1M+A_2)+\gamma(A_3M+A_4) \big)\big|\,dt$$$$=\int_\alpha^{\alpha+\gamma}\Big|\det\Big([\beta\,\,\gamma]\cdot\left[\begin{smallmatrix}
    A_1M+A_2\\
    A_3M+A_4
\end{smallmatrix}\right]\Big)\Big|\,dt.$$ Denote the $n\times m$ matrix $\left[\begin{smallmatrix}
    A_1M+A_2\\ A_3M+A_4
\end{smallmatrix}\right]$ by $K$, then the integrand above is a polynomial in $t$, with coefficients as polynomials of entries of $K$. Assume $$\det\Big([\beta\,\,\gamma]\cdot\left[\begin{smallmatrix}
    A_1M+A_2\\
    A_3M+A_4
\end{smallmatrix}\right]\Big)=\sum_{h=0}^B p_h(K_{ij})t^h,$$ where $B$ is the degree of the above polynomial in $t$ depending on $\gamma_i$'s only, each $p_h(y_{ij})$ is a polynomial in $nm$ variables $y_{ij}: 1\leq i\leq n, 1\leq j\leq m$, and we use $p_h(K_{ij})$ to denote $p_h(y_{ij})$ evaluated at $K_{ij}$. By assumption, the polynomials $p_h$ satisfy the following condition: if for some choice of $y_{ij},1\leq i\leq n, 1\leq j\leq m$, we have $p_h(y_{ij})$ is zero for each $h=1,...,B$, then the $n\times m$ matrix $[y_{ij}]$ has rank strictly less than $m$. 

Next, We note a simple equality about determinants: if $A_1,...,A_m$ are a list of $n$ by $n$ matrices, then \begin{equation}\label{determinant}
\det(\sum_i A_i)=\sum_f\det K_f
\end{equation} where the summation is over all functions $f:\{1,...,,n\} \mapsto \{1,...,m\}$; $K_f$ is the matrix whose $i$th row is the $i$th row of $A_{f(i)}$. This equality can be derived using multilinearity of determinants with respect to rows as follows: denote the $j$th row of $A_i$ by $A^{(j)}_i$, then 

$$\det(\sum_i A_i)=\det\Bigg(\begin{bmatrix}
    \sum_i A^{(1)}_i \\ \sum_i A^{(2)}_i  \\ ... \\ \sum_i A^{(n)}_i
\end{bmatrix}\Bigg)=\sum_i \det\Bigg(\begin{bmatrix}
    A^{(1)}_i \\ \sum_i A^{(2)}_i  \\ ... \\ \sum_i A^{(n)}_i
\end{bmatrix}\Bigg)$$
In the second equality above we expanded the determinant with respect to the first row, taking advantage of multilinearity. Equation \ref{determinant} follows by performing the same expansion across all other rows. 

We can rewrite equation \ref{determinant} as follows: 

\begin{equation}
    \det(\sum_i A_i)=\sum_f c(f)\det A_f,
\end{equation}
where the summation is over all functions $f:\{1,...,n\}\rightarrow\{1,...,m\}\times\{1,...,n\}$. Here $c(f)$ is a constant that depends only on $f$ (actually it can be taken to be either one or zero).  $A_f$ is the $n\times n$ matrix whose $i$th row is $f(i)(2)$th row of $A_{f(i)(1)}$.

If instead we have four square matrices $A,X,B,Y$ of dimension $n\times n$, then can first expand $XB$ into sum of multiple matrices as follows: $$\det(XA+YB)=\det(\sum_{i,j}X_{ij}I_{ij}A+\sum_{i,j}Y_{ij}I_{ij}B),$$ here $I_{ij}$ is the $n$ by $n$ matrix that is $1$ for the $(i,j)$ element but zero elsewhere.  We can then apply equation \ref{determinant} to this new sum. After discarding matrices that have at least one row with only zeros, we can conclude $$\det(XA+YB)=\sum_g p_g(X_{ij},Y_{ij})\det K_g,$$ where the summation is over all functions $g: \{1,...,n\}\mapsto \{1,2\}\times \{1,...,n\}$. $K_g$ is the $n\times n$ matrix whose $i$-th row is the $g(i)(2)$th row of $A$ if $g(i)(1)=1$, and the $g(i)(2)$th row of $B$ if $g(i)(1)=2$. $p_g$ is a polynomial over matrix elements of $X$ whose form depends on $g$ only. For example, in the case $n=2$, let $
A=\left[\begin{smallmatrix} a_1\\ a_2 \end{smallmatrix}\right]$, $
B=\left[\begin{smallmatrix} b_1\\ b_2 \end{smallmatrix}\right]$ where $a_1, a_2, b_1, b_2$ are all row vectors of length 2, then we can calculate:

\[
\begin{aligned}
\det(XA+YB)
&=(X_{11}X_{22}-X_{12}X_{21})\det\!\left[\begin{smallmatrix} a_1\\ a_2 \end{smallmatrix}\right]
 +(X_{11}Y_{21}-X_{21}Y_{11})\det\!\left[\begin{smallmatrix} a_1\\ b_1 \end{smallmatrix}\right]\\
 &+(X_{11}Y_{22}-X_{21}Y_{12})\det\!\left[\begin{smallmatrix} a_1\\ b_2 \end{smallmatrix}\right]
 +(X_{12}Y_{22}-X_{22}Y_{12})\det\!\left[\begin{smallmatrix} a_2\\ b_2 \end{smallmatrix}\right] \\
&\qquad
 +(Y_{11}Y_{22}-Y_{12}Y_{21})\det\!\left[\begin{smallmatrix} b_1\\ b_2 \end{smallmatrix}\right]
 +(X_{12}Y_{21}-X_{22}Y_{11})\det\!\left[\begin{smallmatrix} a_2\\ b_1 \end{smallmatrix}\right].
\end{aligned}
\] 

For any $n\times m$ matrix $Q$, we denote by $Q_I\in\mathbb{R}^{\binom{n}{m}}$ the vector of $m\times m$ minors. Apply the matrix equality above to $\det\big(\beta(A_1M+A_2)+\gamma(A_3M+A_4)\big)$, we see that each polynomial $p_h$ can be rewritten to be $c_h\cdot K_I$ for some constant vector $c_h\in\mathbb{R}^{\binom{n}{m}}$. Therefore we can rewrite 
$$\det\big(\beta(A_1M+A_2)+\gamma(A_3M+A_4)\big)=\sum_{h=0}^B (c_h\cdot K_I) t^h.$$
So far we have rewritten $p_h$ as a polynomial of the $m\times m$ minors $K_I$'s of $K$. Moreover each $p_h$ is a linear combination of $K_I$'s. 

We now record an observation in \cite{Katz}:
\begin{lemma}[\cite{Katz}]\label{lower}
Given some $n\in\mathbb{Z}^+$, there is a constant $C>0$ such that given a polynomial $P(t)$ of the form $P(t)=t^n+\text{lower degree terms}$, we have $\int_0^1|P(t)|\geq C$. 
\end{lemma}
\begin{proof}
Let $z_1,...,z_n$ be the (possibly complex) roots of $P(t)$, then $P(t)=(t-z_1)...(t-z_n)$, then the set $\{t\in[0,1], |t-z_i|<\frac{1}{4n}\text{ for some }i\}$ has Lebesgue measure at least $1/2$. 
\end{proof}

We proceed with proof by contradiction. Assuming our lemma is false for any positive $N$.  By choosing the power $N$ in lemma \ref{key} sufficiently large and using lemma \ref{lower}, we have a sequence of $K^{(i)}, i=1,2,...$ such that $c_h\cdot K^{(i)}_I:=\epsilon_h\in\mathbb{R}\rightarrow 0$ for each $h$, and we would like to show that this implies that each coordinate of $K^{(i)}_I$ goes to zero as $i\rightarrow\infty$. Let $S$ be the image of the Plucker's embedding, which we may identify with a subset of the unit sphere of $\mathbb{R}^{\binom{n}{m}}$, then $S$ is a compact subset of the unit sphere. Furthermore, we know by assumption there does not exist an $x\in S$ such that $c_h\cdot x=0$ for all $h$, for otherwise we can find an $n\times m$ matrix $K$ such that $[\beta, \gamma]\cdot K=0$. The $m$ dimensional subspace $A$ spanned by the column vectors of $K$ then violates the assumption of the lemma. Since $S$ is compact, each $p_h$ is a continuous function on the compact set $S$ that is nowhere zero. This implies that there exists $\epsilon>0$ such that $||c_h\cdot x||>\epsilon$ for all $h$ and any $x\in S$. However we have $||c_h\cdot K^{(i)}_I||\rightarrow 0$ as $i\rightarrow\infty$, which implies that $||K^{(i)}_I||\rightarrow 0$ as $i\rightarrow\infty$. 

However, it is impossible for all $m\times m$ minors of $K^{(i)}$ to converge to zero as $i\rightarrow\infty$. For each column vector $(x_1,...,x_n)$, we can identify it with the differential form $x_1dx_1+....+x_ndx_n$. Then the wedge product of all column vectors of $
\left[ \begin{smallmatrix}
A_1 \\ A_3
\end{smallmatrix} \right]$ and all column vectors of $K$ is the wedge product of all column vectors of $A$. This is a contradiction as the coefficients of $A$ are constants, and do not tend to zero as $i\rightarrow \infty$. 
\end{proof}

We shall need the case $n=4, m=2$ of the lemma above.  We now use the above integral inequality to show a version of the polynomial Wolff axiom for curved tubes as in \cite{Katz} and later \cite{guo}. Follow the setup of the previous lemma, we define $\Phi(x,v,t): \mathbb{R}^2\times\mathbb{R}^2\times\mathbb{R}\rightarrow\mathbb{R}^2$ by $$\Phi(x,v,t)=\begin{bmatrix}
    x\cdot \beta_1(t) \\
    x\cdot \beta_2(t)
\end{bmatrix}+\begin{bmatrix}
    v\cdot \gamma_1(t) \\
    v\cdot \gamma_2(t)
\end{bmatrix}$$ given a Borel set $A\subset\mathbb{R}^4$, we have a set of curves given by $t\mapsto (\Phi(x,v,t),t)$ for $(x,v)\in A$. Later in section 5 we will have the following situation: we have a collection $A'$ of points $(x,v)$ and their associated curves $t\mapsto (\Phi(x,v,t),t)$, unfortunately the directions $v$ are not necessarily $\delta$ disjoint. However there exists a two dimensional subspace $\Pi\subset\mathbb{R}^4$, such that the image of $A'$ under the orthogonal projection $P:\mathbb{R}^4\mapsto\Pi$ is $\delta$ separated. Let $(e_1,e_2,e_3,e_4)$ be another standard basis of $\mathbb{R}^4$, such that $e_3,e_4$ span $\Pi$ and $e_1,e_2$ span $\Pi^\perp$, then $\Phi(x,v,t)$ can be rewritten using a change of variable into $e_1,e_2,e_3,e_4$ and $t$.  We will rename $e_1,e_2,e_3,e_4$ to be $(x,v)$ for notational simplicity, where $x=(e_1,e_2), v=(e_3,e_4)$. For $\delta>0,\alpha>0$ and $\lambda\in[0,1-\alpha]$, we define the following curved $\delta$ tube:

\begin{equation}\label{curvedtube}
T_{x,v,[\alpha,\alpha+\lambda],\delta}=\{(x',t)\in\mathbb{R}^3\,|\,|x'-\Phi(x,v,t)|\leq\delta, x'\in\mathbb{R}^2, t\in[\alpha, \alpha+\lambda]\}
\end{equation}
and we say a $\lambda\times\delta$ tube like this has position $x$, direction $v$. The curved tubes are defined using the new coordinate system, and in this new coordinate system of course $\Phi$ has the same form $\Phi(x,v,t)=\begin{bmatrix}
    x\cdot \beta_1(t) \\
    x\cdot \beta_2(t)
\end{bmatrix}+\begin{bmatrix}
    v\cdot \gamma_1(t) \\
    v\cdot \gamma_2(t)
\end{bmatrix}$ as above, but with different $\beta_i(t)$ and $\gamma_i(t)$. 

The point of this change of variables is as follows: for each $\lambda\times\delta$ tube we associate a "label" $(x,v)\in\mathbb{R}^4$ to it, and we (essentially arbitrarily) assign $v$ to be the direction. Later we shall show that separation of the "direction" $v$ in the "label" implies good separation property on the tubes. 

 \begin{proposition}\label{firstone}
Let $E$ be a positive integer and $\epsilon>0$, then there is a constant $C(E,\epsilon)>0$ so that for every set $\mathbb{T}$ of $\lambda\times\delta$ tubes in $[0,1]^3$, pointing in $\delta$ separated directions, we have $$\#\{T\in\mathbb{T}: T\subset S\}\leq C(E,\epsilon)|S|\delta^{-2-\epsilon}\lambda^{-N}$$ whenever $S$ is a semialgebraic set of complexity at most $E$ and $\lambda\geq\delta>0$.  Note the loss $\lambda^{-N}$ is worse than the loss for straight Kakeya tubes. Here $N$ is the same large integer as in lemma \ref{integral}.
\end{proposition}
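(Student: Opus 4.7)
The plan is to follow the Katz--Rogers approach to polynomial Wolff axioms, using Lemma~\ref{integral} in place of the pointwise transversality that holds in the straight-tube setting. First I would identify each tube $T \in \mathbb{T}$ with its label $(x_T, v_T) \in [0,1]^4$ and starting time $\alpha_T \in [0,1]$, and view it as a vertical strip $\{(x_T, v_T)\} \times [\alpha_T, \alpha_T + \lambda]$ in parameter space. Up to a $C\delta$-fattening of $S$ (which costs only a constant), the containment $T \subset S$ is equivalent to this strip lying in the lifted set
\begin{equation*}
\widetilde{S} := \{(x, v, t) \in [0,1]^5 : (\Phi(x, v, t), t) \in S^{C\delta}\},
\end{equation*}
which is semialgebraic with complexity bounded in terms of $E$ and the degrees of the $\beta_i, \gamma_i$.

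Next I would apply Lemma~\ref{integral} to each label $(x_T, v_T)$ with $n = 4$, $m = 2$, the standard basis, and $M = 0$, obtaining
\begin{equation*}
\int_{\alpha_T}^{\alpha_T + \lambda} |\det \nabla_v \Phi(x_T, v_T, t)|\,dt \gtrsim \lambda^N.
\end{equation*}
Pigeonholing in $t$ yields some $t_T^{\star} \in [\alpha_T, \alpha_T + \lambda]$ where $|\det \nabla_v \Phi(x_T, v_T, t_T^{\star})| \gtrsim \lambda^{N-1}$. At such a time, the map $v \mapsto \Phi(x_T, v, t_T^{\star})$ is a local diffeomorphism of Jacobian $\gtrsim \lambda^{N-1}$, so $\delta$-separated directions yield $\delta\lambda^{N-1}$-separated spatial images on the slice $\{t = t_T^{\star}\}$. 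This is the curved analogue of the direction-separation-implies-image-separation used in the straight case.

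To count the tubes I would then partition $\mathbb{T}$ by which $\delta$-bucket of $[0,1]$ contains $t_T^{\star}$ and combine this with a Guth--Zahl type semialgebraic cell decomposition of $\widetilde{S}$ performed in parameter space. Within each cell, the pointwise direction-Jacobian bound converts the $\delta$-separation of $v$ into spatial separation and enables a volumetric count of tubes, while the change of variables transfers a volume estimate in parameter space back to $|S|$. Doing the cell decomposition directly on $\widetilde{S}$ rather than on $S$ is the device that should allow the sum over the $\delta^{-1}$ time buckets to be absorbed into an overall $\delta^{-\epsilon}$ overhead from the semialgebraic complexity, rather than a naive $\delta^{-1}$ loss.

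The hard part will be the bookkeeping in this last step. Because Lemma~\ref{integral} provides only an averaged Jacobian bound on $[\alpha_T, \alpha_T + \lambda]$ rather than a pointwise one, the good times $t_T^{\star}$ of different tubes may be scattered throughout $[0,1]$, and the cell decomposition must interact cleanly with the pigeonholing so the final bound reads $|S|\delta^{-2-\epsilon}\lambda^{-N}$ and not a strictly weaker exponent. A secondary subtlety is that the freedom to pick a nonzero matrix $M$ in Lemma~\ref{integral} is likely to be needed when the position and direction coordinates of the label are not cleanly separated by the local semialgebraic structure of $\widetilde{S}$ at fine scales, so one must carry this freedom through the argument instead of discarding it at the outset.
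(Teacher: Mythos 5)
Your proposal shares the paper's high-level architecture (semialgebraic structure of the label set, Lemma~\ref{integral} as the substitute for straight-tube transversality), but the concrete plan diverges in a way that does not close, and the places where you flag difficulties are exactly where the argument would break.

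The paper never pigeonholes to a good time $t_T^{\star}$, and it does not set $M=0$. It argues by contradiction: if too many tubes lie in $S$, the label set $L=\{(x,v): T_{x,v,[\alpha,\alpha+\lambda],\delta/C}\subset S_k\}$ is semialgebraic (Lemma~\ref{katz1}), has a large two-dimensional projection onto $v$, and after taking a semialgebraic section (Lemma~\ref{semi_section}) admits a $C^r$-bounded Gromov parametrization $(F,G):[0,1]^2\to L'$. The decisive computation is a change of variables in the Gromov parameter over $B\times I$:
\[
|S_k|\;\gtrsim\;\int_I\int_B\big|\det\nabla_x\!\big(\Phi(F(x),G(x),t)\big)\big|\,dx\,dt
\;=\;\int_B|\det\nabla G|\int_I\big|\det\big(\nabla_x\Phi\cdot\nabla F\cdot\nabla G^{-1}+\nabla_v\Phi\big)\big|\,dt\,dx,
\]
with Bezout controlling multiplicity and Sard discarding critical values. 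Lemma~\ref{integral} is then applied with $M=\nabla F(x)\cdot\nabla G(x)^{-1}$, which varies with $x$; the uniformity in $M$ is the whole point of stating the lemma with an arbitrary matrix, and the inner integral is $\gtrsim\lambda^N$ \emph{as an integral}, keeping the full power of $\lambda$. Since $\int_B|\det\nabla G|\geq|G(B)|>C^\ast|S_k|\lambda^{-N}$, one reaches $|S_k|\gtrsim C^\ast|S_k|$ and a contradiction for $C^\ast$ large.

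By contrast, your plan has two genuine gaps. First, pigeonholing the averaged bound $\int_I|\det\nabla_v\Phi|\,dt\gtrsim\lambda^N$ to a single good time $t_T^{\star}$ costs a factor of $\lambda$ (you get only $\lambda^{N-1}$ pointwise), so the exponent is already wrong at this step; the paper sidesteps this by keeping $t$ integrated. Second, the claim that $\delta$-separated directions give $\delta\lambda^{N-1}$-separated spatial images on the slice $\{t=t_T^{\star}\}$ only holds for tubes sharing the same position $x_T$: the map $v\mapsto\Phi(x,v,t)$ is a diffeomorphism for each \emph{fixed} $x$, but your tubes carry different $x_T$'s, so image points can collide even when the $v_T$'s are far apart. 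You are right that the $t_T^{\star}$'s scatter and that the resulting sum over $\sim\delta^{-1}$ time buckets would need to be absorbed with only $\delta^{-\epsilon}$ overhead, but no mechanism is offered for this, and you are right that the $M$-freedom must be carried through rather than discarded at the outset --- yet your plan discards it. In the paper's proof, $M$ is not a secondary refinement: it is the precise interface between the Gromov parametrization of the label set and the averaged Jacobian lower bound, and the argument does not exist without it.
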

The method we use is invented by Nets Katz and Keith Rogers in \cite{Katz}. Their method has been used and refined in for example \cite{guo} and \cite{oldzahl}. Before we start the proof, we first record some basic facts about semialgebraci sets and a few results from \cite{Katz}.

\begin{definition}[Complexity]
    Let $E\subset\mathbb{R}^n$ be a semialgebraic set given by finite unions of sets of the form $\{(x_1,...,x_n)\in\mathbb{R}^n|\,P(x_1,...,x_n)=0\}$ and $\{(x_1,...,x_n)\in\mathbb{R}^n|\,Q(x_1,....,x_n)>0\}$  where $P,Q$ are multivariate polynomials. The complexity of $E$ is the minimum of the sum of degrees of all defining polynomials. 
\end{definition}

\begin{definition} [Dimension]
    Let $S\subset\mathbb{R}^n$ be a semialgebraic set, the dimension of $S$ is the largest integer $d$ such that there exists an injective semialgebraic map from $[0,1]^d$ into $S$. 
\end{definition}

The following Tarski's projection theorem says the collection of semialgebraic sets forms what logicians call an o-minimal structure.

\begin{theorem}[Tarski] For any positive integers $n\geq 2$ and $E$, there is a constant $C(n,E)$ such that for every semialgebraic set $S\subset\mathbb{R}^n$ of complexity at most $E$, the projection of $S$ onto the first $n-1$ coordinates has complexity at most $C(n,E)$. 
\end{theorem}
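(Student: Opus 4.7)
The plan is to prove this via explicit quantifier elimination, following the Tarski--Seidenberg--Collins approach while tracking complexity. First, I would normalize $S$ into a disjunctive form: a finite union of basic semialgebraic sets, each given as an intersection of sets of the form $\{P_i=0\}$ and $\{Q_j>0\}$ for polynomials $P_i,Q_j\in\mathbb{R}[x_1,\ldots,x_n]$. By the definition of complexity, the total number $r$ of defining polynomials and the sum of their degrees is bounded by $E$. Since the projection of a finite union is the union of projections and since unions add complexity in a controlled way, it suffices to bound the complexity of the projection of one basic semialgebraic piece.

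Next, I would view each defining polynomial $F_i$ as a polynomial in the variable $x_n$ with coefficients in $\mathbb{R}[x_1,\ldots,x_{n-1}]$, and form the \emph{projection set} $\Pi(F_1,\ldots,F_r)$ consisting of the leading coefficients of each $F_i$, the discriminants of each $F_i$, and all subresultant coefficients of pairs $\mathrm{Res}_{x_n}(F_i,F_j)$ (or the corresponding derivatives, when one $F_i$ has multiple roots). These are finitely many polynomials in $\mathbb{R}[x_1,\ldots,x_{n-1}]$ whose number and individual degrees are bounded by an explicit function of $r$ and $\max_i \deg F_i$, hence by a function of $E$ alone. The key structural claim is then: on each cell of the sign-invariant stratification of $\mathbb{R}^{n-1}$ induced by $\Pi(F_1,\ldots,F_r)$, the number of real roots of each $F_i(x',\cdot)$ is constant, no real root of $F_i$ crosses one of $F_j$, and the pattern of signs that the tuple $(F_1,\ldots,F_r)$ takes on the resulting intervals of $\mathbb{R}$ is locally constant in $x'$. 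This is the content of classical real root counting via Sturm--Habicht / subresultant sign-variation theory.

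Granting this structural claim, whether $x'\in\mathbb{R}^{n-1}$ lies in the projection of $S$ is determined by a Boolean condition on the finite combinatorial data $(\mathrm{sgn}\,g(x'))_{g\in\Pi(F_1,\ldots,F_r)}$: one simply asks whether the sign pattern guaranteed by that data on some interval of the $x_n$-line matches the Boolean defining formula of $S$. Since there are only finitely many possible sign patterns of $\Pi(F_1,\ldots,F_r)$, each cut out by polynomial sign conditions on polynomials already in $\Pi(F_1,\ldots,F_r)$, the projection $\pi(S)$ is itself a Boolean combination of sign conditions on the elements of $\Pi(F_1,\ldots,F_r)$, and hence semialgebraic with complexity bounded by $C(n,E)$.

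The main obstacle is bookkeeping: bounding the cardinality of $\Pi(F_1,\ldots,F_r)$ and the degrees of its members in terms of $E$, and then bounding the size of the Boolean combination needed to describe $\pi(S)$. The former follows from standard degree bounds on resultants and subresultants (quadratic in the degrees, polynomial in $r$); the latter is crudely exponential in $E$ and $n$ but finite, which is all that is required for the qualitative statement. I would not attempt to optimize the constant $C(n,E)$, since only its finiteness (for fixed $n,E$) is used later in the paper.
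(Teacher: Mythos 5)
The paper does not prove this theorem; it quotes Tarski's projection theorem as a classical fact from real algebraic geometry (indeed, as the statement that semialgebraic sets form an o-minimal structure) and moves on, so there is no in-paper argument to compare against.

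Your sketch is the standard Tarski--Seidenberg--Collins quantifier-elimination route and is essentially correct as a sketch. One technical subtlety you gloss over, and which is a genuine (if easily repaired) gap as written: the projection set you describe --- leading coefficients, discriminants, and subresultant coefficients of pairs $(F_i,F_j)$ --- is not delineability-complete over cells on which the $x_n$-leading coefficient of some $F_i$ vanishes. On such a cell the degree of $F_i(x',\cdot)$ drops, the discriminant and subresultant data of the full-degree $F_i$ no longer control its roots, and the root-constancy claim in your third paragraph can fail. Collins' projection operator therefore also carries the analogous data of all \emph{reducta} of each $F_i$ (equivalently, one may crudely adjoin all $x_n$-coefficients of every $F_i$ and all principal subresultant coefficients among all pairs of reducta, including against derivatives). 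This enlarges $\Pi$ by a factor still bounded purely in terms of $E$, so your conclusion survives, but the structural claim needs that enlargement to hold. As you note, only finiteness of $C(n,E)$ is used downstream; if one ever wanted sharper constants, one would replace Collins' doubly-exponential cylindrical decomposition by the singly-exponential quantifier-elimination bounds of Basu--Pollack--Roy.
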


From Tarski's projection theorem, \cite{Katz} showed the the following lemma for straight tubes. The proof carries over to our curved tubes.

\begin{lemma}[curved case of \cite{Katz}]\label{katz1}
Let $S\subset\mathbb{R}^3$ be a semialgebraic set of complexity at most $E$ and let $\alpha\in\mathbb{R}$, $L_s=\{(x,v)\in[0,1]^4,T_{x,v,[\alpha,\alpha+\lambda],\delta}\subset S\}$ is a semialgebraic set of complexity at most $C(E)$, where $C(E)$ is a constant depending on $E$ only. 
\end{lemma}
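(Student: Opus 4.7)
The plan is to realize $L_s$ as the complement of a projection of an explicitly semialgebraic auxiliary set, and then invoke Tarski's projection theorem to bound the resulting complexity.

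First, I would convert the inclusion $T_{x,v,[\alpha,\alpha+\lambda],\delta}\subset S$ into a first-order formula. A point $(x,v)\in[0,1]^4$ fails to lie in $L_s$ precisely when there exists $(x',t)\in\mathbb{R}^2\times[\alpha,\alpha+\lambda]$ with $|x'-\Phi(x,v,t)|\leq\delta$ and $(x',t)\notin S$. Accordingly I would introduce the auxiliary set
\[
W=\Big\{(x,v,x',t)\in[0,1]^4\times\mathbb{R}^2\times[\alpha,\alpha+\lambda]:\,|x'-\Phi(x,v,t)|^2\leq\delta^2,\;(x',t)\notin S\Big\}.
\]
Since each coordinate of $\Phi(x,v,t)$ is polynomial in $(x,v,t)$ (bilinear in $(x,v)$ with polynomial coefficients in $t$ determined by the fixed family $P_t$), the tube inequality translates into one polynomial inequality whose degree depends only on $P_t$. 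The complement $S^c$ is semialgebraic of complexity at most $C(E)$ by closure of semialgebraic sets under complement, and the clipping conditions on $x,v,t$ are linear. Intersecting these defining conditions shows that $W$ is semialgebraic of complexity bounded by a constant depending only on $E$ (and the fixed data of $P_t$).

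Next, I would apply Tarski's projection theorem iteratively, projecting out the variable $t$ and then the two coordinates of $x'$ one at a time, to obtain that $\pi(W)\subset\mathbb{R}^4$ (where $\pi$ denotes the projection onto the $(x,v)$ coordinates) is semialgebraic of complexity at most $C'(E)$. By construction $L_s=[0,1]^4\setminus\pi(W)$, and since complementation within a box preserves semialgebraicity with at most a constant-factor complexity increase, $L_s$ has complexity at most $C''(E)$, as required.

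I do not foresee a serious obstacle; the argument is a direct application of the Tarski--Seidenberg principle together with the semialgebraic logical operations. The only point worth checking is that the curvature of the tubes does not worsen the complexity bound, and this is automatic because $\Phi$ is polynomial in all of its arguments, so the degree contribution from the tube-defining inequality depends only on the fixed family $P_t$ and gets absorbed into the constant $C(E)$. Neither $\delta$ nor $\lambda$ nor $\alpha$ enters the complexity, since they appear only as numerical coefficients of inequalities of bounded degree.
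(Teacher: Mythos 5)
Your proof is correct and matches the route the paper intends: the paper simply cites \cite{Katz} and asserts that the Tarski--Seidenberg quantifier-elimination argument carries over to curved tubes, and you have spelled out exactly that argument, including the key point that $\Phi$ being polynomial of degree fixed by $P_t$ keeps the complexity of the auxiliary set (and hence of the projection and its complement) bounded in terms of $E$ alone. One tiny imprecision that does not affect the argument: $\Phi$ is linear, not bilinear, in $(x,v)$, with polynomial coefficients in $t$; either way the tube-defining inequality is a polynomial of degree depending only on $P_t$, which is all the argument needs.
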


The following lemma shows that you can always pick a semialgebraic slice.

\begin{lemma}[\cite{Katz}]\label{semi_section}Let $n\in\mathbb{Z}^+$ and $S\subset\mathbb{R}^{2n}$ be a compact semialgebraic set of complexity at most $E$, then there is a semialgebraic set $Z\subset\mathbb{R}^{2n}$ of complexity at most $C(n,E)$ such that $\Pi(Z)=\Pi(S)$ and $\Pi$ is a bijection on $Z$, where $\Pi$ is the projection of $\mathbb{R}^{2n}$ onto its first $n$ coordinates and $C(n,E)$ is a constant depending on $n$ and $E$ only. 
\end{lemma}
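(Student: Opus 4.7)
The plan is to construct $Z$ as a \emph{lexicographic minimum section} of $S$ and to verify, via iterated application of Tarski's projection theorem, that $Z$ remains semialgebraic with complexity controlled by $n$ and $E$. Write points of $\mathbb{R}^{2n}$ as $(x,y)$ with $x=(x_1,\dots,x_n)\in\mathbb{R}^n$ and $y=(y_1,\dots,y_n)\in\mathbb{R}^n$, so that $\Pi(x,y)=x$. The idea is that for each $x\in\Pi(S)$ there is a \emph{unique} smallest $y$ with $(x,y)\in S$ in the lexicographic order, and this choice is first-order definable.

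I would proceed inductively. Set $Z_0=S$, and for $i=1,\dots,n$ define
\[
Z_i=\Big\{(x,y)\in Z_{i-1}\;:\; \neg\exists\, y_i'<y_i,\;\exists\, y_{i+1}',\dots,y_n'\in\mathbb{R}\text{ with }(x,y_1,\dots,y_{i-1},y_i',y_{i+1}',\dots,y_n')\in Z_{i-1}\Big\}.
\]
Each $Z_i$ is defined from $Z_{i-1}$ by a first-order formula using bounded quantifiers, polynomial inequalities, and Boolean operations; Tarski's projection theorem (together with its standard corollary on quantifier elimination for the theory of real closed fields) then guarantees that $Z_i$ is again semialgebraic, with complexity bounded by some $C_i(n,E)$ depending only on $n$ and the complexity of $Z_{i-1}$. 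Iterating $n$ times yields a final set $Z:=Z_n$ of complexity at most some $C(n,E)$.

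Next I would check that $Z$ has the required properties. For the equality $\Pi(Z)=\Pi(S)$, fix $x\in\Pi(S)$ and consider the fiber $S_x:=\{y:(x,y)\in S\}$, which is compact since $S$ is. The projection $\pi_1(S_x)\subset\mathbb{R}$ is nonempty and compact, so it has a minimum $y_1^{*}$; then $\{y\in S_x:y_1=y_1^{*}\}$ is compact and nonempty, so one can iterate to produce a (unique) lexicographic minimum $y^{*}=(y_1^{*},\dots,y_n^{*})$. By construction $(x,y^{*})$ survives every stage of the induction, so $x\in\Pi(Z)$. Injectivity of $\Pi|_Z$ is immediate from the same construction: if $(x,y),(x,y')\in Z$ differ, then comparing their first differing coordinate contradicts the defining condition of $Z_i$ at that index.

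The only nontrivial point is bookkeeping for the complexity bound: each passage from $Z_{i-1}$ to $Z_i$ involves one existential block over $n-i+1$ variables and one negation. Using the effective form of Tarski--Seidenberg (the quantifier elimination bound is polynomial in the number of polynomials, their degrees, and singly exponential in the number of eliminated variables), the complexity after $n$ rounds is still a function of $n$ and $E$ alone, giving the desired $C(n,E)$. I do not expect genuine obstacles here; the main point to be careful about is that compactness of $S$ (which is preserved by each $Z_i$, since $Z_i\subset S$ and $Z_i$ is closed as a Boolean combination of closed sets arising from the quantifier-eliminated formula) is what makes the infima actual minima, so that the lexicographic section truly lives inside $S$.
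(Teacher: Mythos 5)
The paper does not actually prove Lemma~\ref{semi_section}; it is imported from Katz--Rogers \cite{Katz} as a black box, so there is no in-paper proof to compare against. That said, your lexicographic-minimum construction is the standard argument for this semialgebraic-section lemma and is essentially correct: the iterated definition of $Z_i$ is a first-order formula in the language of ordered fields with parameters in $Z_{i-1}$, so effective quantifier elimination (Tarski--Seidenberg) keeps each $Z_i$ semialgebraic with complexity controlled by $n$ and the previous complexity, and compactness of the fiber $S_x$ gives existence of the lexicographic minimum for every $x\in\Pi(S)$. Your verification of $\Pi(Z)=\Pi(S)$ and injectivity of $\Pi|_Z$ is also correct.

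One remark in your last paragraph is wrong and should be removed, although it does not damage the argument: the sets $Z_i$ are generally \emph{not} closed, hence not compact. Take $n=1$ and $S=\{(0,-1)\}\cup\{(x,0):|x|\le 1\}\subset\mathbb{R}^2$, which is compact and semialgebraic; then $Z_1=\{(0,-1)\}\cup\{(x,0):0<|x|\le 1\}$, which fails to contain the limit point $(0,0)$. The reason this does not matter is that your induction never needs compactness of $Z_{i-1}$. You only need that the lexicographic minimum $y^{*}(x)$ of the \emph{original} compact fiber $S_x$ exists, and then you check directly that the point $(x,y^{*}(x))$ satisfies the defining condition of every $Z_i$ (because any witness of a strictly smaller $i$-th coordinate in $Z_{i-1}\subset S$ would contradict lexicographic minimality in $S_x$) and that any two distinct points of $Z_n$ over the same $x$ produce a contradiction at their first differing coordinate. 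So you should justify the survival of $(x,y^{*}(x))$ through the stages by appealing to compactness of $S_x$ alone, and drop the claim that each $Z_i$ is closed.
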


We also need the following lemma, first proved by Yomdin and subsequently generalized by Gromov:

\begin{lemma}[Gromov] For any semialgebraic set $A\subset[0,1]^n$ of dimension $d$ and complexity at most $E$ and positive integer $r$, we can find maps $\phi_1,..,\phi_N: [0,1]^d\rightarrow[0,1]^n$ so that  $$\bigcup_{i=1}^N \phi_i([0,1]^d)=A\text{           
 and           }||\phi_j||_{C^r}=\max_{|\alpha|\leq r}||\partial^\alpha\phi_j||_\infty\leq 1,$$ here $N$ is bounded by a constant depending on $n,r,E$ only.
\end{lemma}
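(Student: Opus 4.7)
I plan to establish this (the Yomdin--Gromov algebraic lemma) by double induction on the ambient dimension $n$ and the dimension $d$ of $A$, combining cylindrical algebraic decomposition with a one-dimensional reparametrization lemma. The base case $d=0$ is immediate, since a zero-dimensional semialgebraic set of complexity at most $E$ consists of at most $C(n,E)$ points by Thom--Milnor, each parametrized by a constant map. For $d\geq 1$, I will first apply cylindrical algebraic decomposition to write $A$ as a disjoint union of at most $C(n,E)$ semialgebraic cells; after a permutation of coordinates, each $d$-dimensional cell is the graph of a bounded continuous semialgebraic function $F:V\to[0,1]^{n-d}$ over a semialgebraic open set $V\subset[0,1]^d$ of complexity at most $C(n,E)$. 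Inductively parametrize $V$ (as a full-dimensional semialgebraic subset of $[0,1]^d$) by $C^r$-bounded maps $\psi_j:[0,1]^d\to\overline{V}$, and consider the lifts $(\psi_j,F\circ\psi_j):[0,1]^d\to[0,1]^n$ whose images cover the given cell. The remaining task is to reparametrize each such lift so that the composition has $C^r$-norm at most $1$.

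The key technical ingredient is a \emph{reparametrization lemma}: for every bounded semialgebraic $g:(0,1)^d\to\mathbb{R}$ of complexity at most $E$ and every integer $r\geq 1$, there exist at most $C(d,r,E)$ semialgebraic maps $\rho_i:[0,1]^d\to[0,1]^d$ with $\|\rho_i\|_{C^r}\leq 1$ such that their images cover $(0,1)^d$ up to a measure-zero set and $\|g\circ\rho_i\|_{C^r}\leq 1$ for each $i$. I will prove this by induction on $d$ via Fubini-type arguments, so the heart of the matter is the case $d=1$: given a bounded semialgebraic $h:(0,1)\to\mathbb{R}$ of complexity at most $E$, the zero sets of $h,h',\ldots,h^{(r)}$ are finite with cardinality bounded in terms of $E$ and $r$ by Thom--Milnor, and they partition $(0,1)$ into finitely many open sub-intervals on which every derivative of order $\leq r$ is sign-definite and monotone. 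On each sub-interval $(a,b)$ I will compose $h$ first with the affine map $t\mapsto a+(b-a)t$ and then with a polynomial of the form $t\mapsto t^s(1-t)^{s'}$, choosing exponents $s,s'\leq r$ to absorb the blow-up of $h^{(k)}$ near the endpoints, the size of which is controlled through the semialgebraic Lojasiewicz inequality.

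Iterating the one-dimensional reparametrization coordinate by coordinate, and invoking the inductive hypothesis on $d$ to handle the lower-dimensional strata where the Jacobians degenerate, gives the reparametrization lemma in all dimensions; applying it simultaneously to each coordinate of $F\circ\psi_j$ then yields the desired $\phi_1,\ldots,\phi_N$. The principal obstacle is the bookkeeping of constants: each reparametrization or composition step inflates the complexity of the functions being parametrized, so the double induction must be arranged so that at every stage the resulting bounds depend only on $n,r,E$ and not on the number of iterations. I plan to handle this by observing that (i) composing with a polynomial of degree bounded in terms of $r$ increases complexity only by a constant factor depending on $r$ and the previous complexity, and (ii) both cylindrical algebraic decomposition and the Thom--Milnor bound produce complexity estimates depending only on the ambient dimension and the complexity of the defining polynomials. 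A minor additional care is needed to promote a uniform $C^r$-norm bound to the normalization $\leq 1$, which is arranged by a final rescaling of the parameter.
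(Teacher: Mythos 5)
The paper does not actually prove this lemma: it is cited as a known result of Yomdin and Gromov (stated without proof), so there is no in-paper argument to compare against. Evaluating your sketch on its own merits: the overall architecture — cylindrical algebraic decomposition to reduce to graphs of semialgebraic functions over cells, an inductive reparametrization lemma whose heart is the one-variable case, Thom--Milnor to bound the number of pieces, and careful complexity bookkeeping — is the right one and matches the standard proofs in the literature (Gromov, Burguet, Cluckers--Pila--Wilkie).

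The weakest link is the one-dimensional reparametrization step. You claim that after cutting $(0,1)$ at the zeros of $h, h', \ldots, h^{(r)}$, a single substitution $t \mapsto t^s(1-t)^{s'}$ with $s, s' \le r$ absorbs the endpoint blow-up via the Lojasiewicz inequality. This is not quite right as stated. With $\rho(t)=t^s$, one finds $(h\circ\rho)'(t) = h'(\rho(t))\rho'(t)$, and even under the optimal bound $|h'(x)|\lesssim \mathrm{dist}(x,\partial)^{-1}$ (which is what you get from interior regularity of a bounded $C^r$ function, not from Lojasiewicz), the product still degenerates like $t^{-1}$ — a single polynomial substitution of bounded degree does not flatten the singularity. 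What the standard arguments do instead is apply a fixed low-degree substitution (usually $t\mapsto t^2$) and then \emph{iterate} the reparametrization lemma at one lower order of differentiability, so that the gain in regularity is accumulated inductively over $r$ and the exponent $r$ enters through the number of iterations rather than through the degree of a single map. If you replace the single $t^s(1-t)^{s'}$ step by this inductive bootstrap (and allow the degrees and the number of pieces to depend on both $E$ and $r$, as the Thom--Milnor counts force), the outline becomes essentially the proof of Burguet and of Cluckers--Pila--Wilkie, and the rest of your plan — the Fubini reduction over coordinates and the descent to lower-dimensional strata — goes through as you describe.
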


We also need a version of the Bezout's theorem from theorem 5.2 of \cite{chen2011}:
\begin{theorem}[Bezout's theorem] Let $f_1,...,f_n$ be $n$ polynomials with coefficients in $\mathbb{R}$ over $n$ variables $x_1,...,x_n$, a zero $X=(x_1^\ast,...,x_n^\ast)$ is an called an isolated root if $\det(\nabla f)(X)\not=0$. If the total degree of $f_i$ is $d_i$, then the number of isolated zeros to the system of equations $f_i(x_1,...,x_n)=0, i=1,...,n$ is bounded above by $d_1\cdot...\cdot d_n$.
\end{theorem}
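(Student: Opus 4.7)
The plan is to pass to the complex numbers and then invoke the projective version of Bezout's theorem, after a perturbation to kill common positive-dimensional components. First I would observe that an isolated real zero $X$ of $f_1=\cdots=f_n=0$ at which $\det(\nabla f)(X)\neq 0$ is, in particular, an isolated zero in $\mathbb{C}^n$ with intersection multiplicity exactly $1$ (the local ring $\mathcal{O}_{X}/(f_1,\ldots,f_n)$ is then a field by the inverse function theorem applied over $\mathbb{C}$). Hence it suffices to show that the total number of such simple isolated complex zeros of $f_1,\ldots,f_n$ is at most $d_1\cdots d_n$.

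Next I would homogenize. Write $F_i(X_0,X_1,\ldots,X_n)=X_0^{d_i}f_i(X_1/X_0,\ldots,X_n/X_0)$ so that $F_i$ is homogeneous of degree $d_i$ on $\mathbb{P}^n(\mathbb{C})$, and the affine zero set sits inside $\{X_0\neq 0\}$. The projective Bezout theorem asserts that if the scheme $Z=\{F_1=\cdots=F_n=0\}\subset\mathbb{P}^n$ is zero-dimensional, then
$$\sum_{P\in Z}\operatorname{mult}_P(F_1,\ldots,F_n)\;=\;d_1\cdots d_n,$$
from which the stated bound on the affine count of simple isolated zeros follows immediately. To handle the case where the $F_i$ share a positive-dimensional component (for instance, at infinity), I would perturb: replace each $f_i$ by $f_i^{\varepsilon}:=f_i+\varepsilon g_i$ for generic polynomials $g_i$ of degree $d_i$ and a generic small parameter $\varepsilon$. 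By the implicit function theorem each of the original simple isolated zeros persists as a simple isolated zero of the perturbed system, while by a standard genericity (Sard/Bertini) argument $\{F_1^{\varepsilon}=\cdots=F_n^{\varepsilon}=0\}$ is zero-dimensional, so projective Bezout applies. Letting $\varepsilon\to 0$ gives the desired count for the original system.

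The main obstacle is supplying projective Bezout itself; the cleanest self-contained route is to compute the Hilbert polynomial of the graded ring $\mathbb{C}[X_0,\ldots,X_n]/(F_1,\ldots,F_n)$ and note its leading coefficient equals $d_1\cdots d_n/n!$ in the complete-intersection case, combined with additivity of intersection multiplicity. An entirely elementary but more delicate alternative is induction on $n$ via resultants: after a generic linear change of variables making each $f_i$ regular of degree $d_i$ in $x_n$, consider $R_i(x_1,\ldots,x_{n-1})=\operatorname{Res}_{x_n}(f_n,f_i)$ for $i<n$; the subtlety, which is the actual hard point of this approach, is that the Sylvester determinant gives only $\deg R_i \le d_n d_i$, so the naive induction loses a factor of $d_n^{n-1}$. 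Overcoming this requires exploiting that each isolated simple zero of the original system projects to an isolated simple zero of $(R_1,\ldots,R_{n-1})$ with at most $d_n$ preimages, together with a careful \emph{multi-degree} (bidegree) bookkeeping so that the induction hypothesis can be applied to the $R_i$ with a sharper bound. Because of this, I would choose the projective/homogenization route as the main line of proof and reserve the resultant argument as a parallel elementary verification.
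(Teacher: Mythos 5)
The paper does not actually prove this statement; it simply cites it as Theorem~5.2 of \cite{chen2011} and uses it as a black box in the change-of-variables step of Proposition~\ref{firstone}. So there is no internal proof to compare your attempt against. That said, your sketch is a correct and standard route to the result: complexify (a real isolated zero with $\det(\nabla f)(X)\neq 0$ is, by the holomorphic inverse function theorem, a simple isolated zero in $\CC^n$), homogenize, perturb generically, invoke projective Bezout for the resulting zero-dimensional complete intersection, and let $\varepsilon\to 0$; the simple zeros of the unperturbed system persist, so the count is an upper bound.

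Two places you would want to spell out carefully if you wrote this up in full. First, the genericity/no-component-at-infinity step: take $g_i$ of degree exactly $d_i$ with generic top-degree form, so that for every small $\varepsilon\neq 0$ the leading forms of $f_i+\varepsilon g_i$ have empty common zero locus on the hyperplane $\{X_0=0\}$; this kills components at infinity, not just affine ones, and combined with the fact that the locus of $n$-tuples of forms of degrees $(d_1,\dots,d_n)$ with positive-dimensional projective zero scheme is a proper closed subvariety of the parameter space, the line $\varepsilon\mapsto f+\varepsilon g$ meets that bad locus in only finitely many $\varepsilon$. Second, your ``$\varepsilon\to 0$'' argument only shows persistence of the already-nondegenerate zeros; one should note explicitly that each such zero contributes exactly one point to the perturbed system (no merging), which is immediate from the implicit function theorem since they are already transverse before perturbation. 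Your caveat about the resultant route is well placed: the Sylvester bound $\deg\mathrm{Res}_{x_n}(f_n,f_i)\leq d_n d_i$ makes a naive induction overcount by a power of $d_n$, and recovering the sharp $d_1\cdots d_n$ requires a substantially more delicate multidegree argument, so the homogenization approach is the cleaner main line.
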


Now we go back to the proof of proposition \ref{firstone}. 

\begin{proof}
 We first cover $[0,1]$ with intervals $I_1,...,I_n$ of length $\lambda/2$ such that different $I_i,I_j$ overlap if and only if $i+1=j$. The projection of $T\in\mathbb{T}$ into the third coordinate must contain some $I_k$. For each $k$, we let $\mathbb{T}_k$ be the collection of curved tubes in $\mathbb{T}$ whose projection onto the third coordinate contains $I_k$. We let $S_k$ be the subset of $S$ consisting of points whose projection on to the third coordinate is a in $I_{k-1}\cup I_k\cup I_{k+1}$. Then it suffices to prove $$\#\{T\in\mathbb{T}_k: T\subset S_k\}\leq C(E,\epsilon)|S_k|\delta^{-2-\epsilon}\lambda^{-N}$$ for each $k$.
Clearly, we can assume $|S_k|\geq\lambda\delta^2$, for otherwise $S_k$ cannot contain a single curved tube. We now use proof by contradiction: suppose for each constant $C^\ast>0$, we can find a collection of $\lambda\times\delta$ tubes $\mathbb{T}$ with $\delta$ separated directions such that $$\#\{T\in\mathbb{T}: T\subset S_k\}> C^\ast|S_k|\delta^{-2-\epsilon}\lambda^{-N}.$$ Choose a big constant $C>0$, depending on $p,q,r$ and $s$. Let $\alpha$ be the left end point of $I_k$  and consider the set $$L=\{(x,v)\in[0,1]^4: T_{x,v,[\alpha,\alpha+\lambda],\frac{\delta}{C}}\subset S_k\},$$ this set is a semialgebraic set by lemma \ref{katz1}. Furthermore, we can check by direct computation that $T_{x,v',[\alpha,\alpha+\lambda],\frac{\delta}{C}}\subset T_{x,v,[\alpha,\alpha+\lambda],\delta}$ if $|v'-v|<=\frac{\delta}{C}$. Next we can use lemma \ref{semi_section} to obtain a semialgebraic section $L'\subset L$ consisting of a single $(x,v)$ for each $v$ appearing in $L$. Let $\Pi$ denote the projection $(x,v)\rightarrow v$, then 
$|\Pi(L')|=|\Pi(L)|>C^\ast|S_k|\delta^{-\epsilon}\lambda^{-N}$ by our proof by contradiction hypothesis, where the constants $C^\ast$ might be different from each other by a factor depending only on $\epsilon$ and $E$. The semialgebraic dimension of $L'$ is 2, because the projection onto $v$ restricted to $L'$ is a semialgebraic bijection to a two dimensional set. By Gromov's algebraic lemma, we can break $L'$ into $N$ pieces such that for each piece $L_j$, there is a map $(F_j,G_j):[0,1]^2\rightarrow[0,1]^{4}$ with $(F_j,G_j)([0,1]^2)=L_j$ and $||(F_j,g_j)||_{C^r}\leq 1$, where we take $r=\lfloor\frac{C^2}{\epsilon}\rfloor$ and $N$ is bounded above by a number depending on $\epsilon$ and $E$ only. The pigeonhole principle then implies that there exists $j$ such that $$|G_j([0,1]^2)|=|\Pi(L_j)|> C^\ast|S_k|\delta^{-\epsilon}\lambda^{-N}.$$ This implies that we have a ball $B\subset[0,1]^2$ centered at some $x_0$ and of radius $\delta^{\frac{\epsilon}{4}}$ so that $$|G_j(B)|>C^\ast|S_k|\delta^{-\frac{\epsilon}{2}}\lambda^{-N}.$$ Now replace $(F_j,G_j)$ by $(F,G)$ the $(r-1)$th Taylor approximation. Note that the length of the boundary of $G(B)$ is bounded above by $O(\delta^{\frac{\epsilon}{4}})$ and recall we assume $|S_k|>\delta\lambda^2$, this implies that $|G(B)|>C^\ast|S_k|\lambda^{-N}$ at least if $C$ is large enough since we have $|(F_j,G_j)(x)-(F,G)(x)|\leq |x-x_0|^r\leq\delta^{\frac{C^2}{4}}$. Moreover, since the tube $T_{F_j(x),G_j(x),[\alpha,\alpha+\lambda],\frac{\delta}{C}}$ is contained in $S_k$ for $x\in B$ by assumption, we see that  $(\Phi((F(x),G(x),t),t)\in S_k$ if $x\in B$ and $t\in I$. This implies 
\begin{equation}
    |S_k|\geq\int_I|\Phi(F(x),G(x),t)(B)|\,dt. 
\end{equation}
We would like to deal with $\Phi(F(x),G(x),t)(B)$ by a change of variable in $x$. However $x\mapsto \Phi(F(x),G(x),t)$ might not be a diffeomorphism onto its image as it can be a many to one function and may not even be a local diffeomorphism. To remedy this, let $B_t\subset B$ be the subset of $x$ where $\nabla_x \Phi(F(x),G(x),t)$ is invertible.  We have $|\Phi(F(x),G(x),t)(B)|=|\Phi(F(x),G(x),t)(B_t)|$ by Sard's theorem. Furthermore, by Bezout's theorem, for each $t$, $\Phi(F(x),G(x),t)$ is at most an $(r-1)^2$ to one function on $B_t$. Therefore, we conclude that:

\begin{equation*}
\begin{split}
|S_k|\geq\int_I|\Phi(F(x),G(x),t)(B)|\,dt &\geq\frac{1}{(r-1)^{2}}\int_I\int_{B_t}|\det\nabla_x\Phi(F(x),G(x),t)|\,dx\,dt\\
&=\frac{1}{(r-1)^{2}}\int_I\int_{B} |\det\nabla_x\Phi(F(x),G(x),t)|\,dx\,dt\\
&=\frac{1}{(r-1)^{2}}\int_I\int_{B}|\det (\nabla_x\Phi\cdot\nabla F+\nabla_v\Phi\cdot\nabla G)|\,dx\,dt
\end{split}
\end{equation*}
 We can now use lemma \ref{integral} to bound the RHS to be $$\geq\frac{1}{(r-1)^{2}}\int_{B}\big|\det \nabla G\big|\int_{I}|\det(\nabla_x\Phi\cdot\nabla F\cdot\nabla G^{-1}+\nabla_v\Phi)|\,dt\,dx$$

$$\gtrsim \frac{1}{(r-1)^{2}}\int_{B}|\det\nabla G|\lambda^{N}\,dx\geq\frac{1}{(r-1)^{2}}G(B)\lambda^N>\frac{C^\ast|S_k|}{(r-1)^{2}},$$ note in the first inequality, on the set where $\det\nabla G(x)=0$ for $x\in B$ the inequality is true trivially as the integrand on the right is 0, and the third inequality $\int_B|\det\nabla G|\geq G(B)$ is true by the change of variable formula. Now we simply set $C^\ast>0$ to be large enough we have a contradiction. This completes the proof. 
 \end{proof}

 The version of polynomial Wolff axiom we need is slightly more general. The proof for the straight case as in \cite{Katz} carries over, except for some technicalities which can be checked.

The following is the next step towards the needed polynomial Wolff axiom. If $A$ is a subset of a Euclidean space and $\delta>0$ is small, then we use $A_\delta$ to denote the $\delta$ neighborhood of $A$. 

 \begin{lemma}\label{weakha}
 Let $S\subset\mathbb{R}^3$ be a semialgebraic set of complexity at most $E$, and let $\epsilon>0$, then there is a constant $C(E,\epsilon)$, such that for every collection $\mathbb{T}$ of tubes of shape $1\times\delta$, we have  $$\#(\{T\in\mathbb{T}:|T\cap S|\geq\lambda|T|\})\leq C(E,\epsilon)|S_{\delta}|\delta^{-2-\epsilon}\lambda^{-N}.$$  Here $N$ is the same large integer as in lemma \ref{integral}.
 \end{lemma}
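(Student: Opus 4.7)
The plan is to reduce Lemma \ref{weakha} to Proposition \ref{firstone} by a pigeonholing argument that, for each tube $T$ with $|T\cap S|\geq\lambda|T|$, produces a sub-tube of shape $\lambda'\times\delta$ (with $\lambda'\sim_{E}\lambda$) that is \emph{entirely} contained in a slight fattening $S_{C\delta}$. The semialgebraic structure of $S$ is exactly what makes the upgrade from ``mostly inside'' to ``entirely inside a sub-tube'' possible, at the cost of only an $E$-dependent constant loss in the length parameter.

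Fix $T\in\mathbb{T}$ with $|T\cap S|\geq\lambda|T|$. Since each $t$-slice of $T$ has area $\lesssim\delta^{2}$ while $|T|\approx\delta^{2}$, a Fubini argument gives $|P_{T}|\gtrsim\lambda$ where $P_{T}:=\pi_{t}(T\cap S)\subset[0,1]$. The set $T\cap S$ is semialgebraic of complexity $O_{E}(1)$ because the defining inequality $|x'-\Phi(x,v,t)|\leq\delta$ of $T$ is polynomial in $(x',t)$ (here $\Phi$ has polynomial entries by hypothesis). Hence by Tarski's projection theorem, $P_{T}$ is a semialgebraic subset of $\mathbb{R}$ of complexity $C(E)$. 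A semialgebraic subset of $\mathbb{R}$ of bounded complexity is a disjoint union of at most $C(E)$ intervals, so some interval $I_{T}\subset P_{T}$ has length at least $\lambda/C(E)$; after truncation we may take $|I_{T}|=\lambda':=\lambda/C(E)$. For each $t\in I_{T}$ the slice $T\cap\{t\}$ meets $S$ and has diameter $\lesssim\delta$, so the sub-tube $T':=T_{x,v,I_{T},\delta}$ is entirely contained in $S_{C\delta}$.

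The sub-tubes $\{T':T\in\mathbb{T}\}$ inherit $\delta$-separated direction labels from their parents, all have shape $\lambda'\times\delta$, and each is fully contained in the semialgebraic set $S_{C\delta}$, which itself has complexity $O_{E}(1)$ (being defined from $S$ by one existential quantifier over a polynomial inequality). Applying Proposition \ref{firstone} to this collection and $S_{C\delta}$ yields
\begin{equation*}
\#\{T\in\mathbb{T}:|T\cap S|\geq\lambda|T|\}\;\leq\;C(E,\epsilon)\,|S_{C\delta}|\,\delta^{-2-\epsilon}(\lambda')^{-N}\;\lesssim_{E}\;C(E,\epsilon)\,|S_\delta|\,\delta^{-2-\epsilon}\lambda^{-N},
\end{equation*}
where in the last step we absorbed the factor $(C(E))^{N}$ from $\lambda'=\lambda/C(E)$ and used the standard unconditional comparison $|S_{C\delta}|\lesssim_{C}|S_{\delta}|$ (cover $S$ by $\delta$-balls centered at a $\delta$-net and enlarge to $C\delta$-balls).

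The main technical obstacle is the bookkeeping required to keep all semialgebraic complexities bounded purely in terms of $E$: one must verify that $T\cap S$, its $t$-projection $P_{T}$, and the enlargement $S_{C\delta}$ all have complexity $O_{E}(1)$, which follows from Tarski's theorem applied in turn to each definition but requires tracking how the defining polynomials of $\Phi$ interact. A secondary issue is the edge case $\lambda'<\delta$ (equivalently $\lambda<C(E)\delta$) where Proposition \ref{firstone}'s hypothesis $\lambda\geq\delta$ fails; this is harmless because in that regime $\lambda^{-N}$ already dominates the trivial bound $\delta^{-4}$ on the total number of $\delta$-separated labels $(x,v)\in[0,1]^{4}$, so the conclusion holds vacuously upon enlarging the constant $C(E,\epsilon)$.
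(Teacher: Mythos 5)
Your proposal is correct and follows essentially the same route as the paper: both reduce to Proposition \ref{firstone} by extracting, from each tube with $|T\cap S|\geq\lambda|T|$, a $\sim\lambda/C(E)\times\delta$ sub-tube entirely contained in $S_{C\delta}$, with the bounded semialgebraic complexity of a one-dimensional slice supplying the needed long interval. The only cosmetic difference is that the paper applies Fubini over the $\delta$-cross-section to select a good parallel translate $l$ of the central curve and counts connected components of $l\cap S$, whereas you project $T\cap S$ onto the $t$-axis via Tarski's theorem and count components of that projection; both give the same sub-tube.
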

\begin{proof}
 Let $\Pi$ be projection onto the third coordinate. Note that if $|T\cap S|\geq\lambda|T|$, then the Fubini theorem implies that there is a parallel translate $l$ of the central curve $t\mapsto \Phi(x,v,t)$ of $T$ such that $l$ is contained in $T$ and $|l\cap S|\geq\lambda$. The fundamental theorem of algebra then implies that $l\cap S$ has at most $C(E)$ connected components for a constant $C(E)$ depending on $E$ only. Therefore there is a connected component of length at least $C(E)^{-1}\lambda$; hence $T_\delta\cap S_\delta$ contains a $C(E)^{-1}\lambda\times\delta$ curved tube with the same central curve as $T$, apply the previous proposition then concludes the proof. 
\end{proof}

Next, we replace $|S_\delta|$ by $|S|$ and obtain the polynomial Wolff axiom we need. Before that, we record a result due to Wongkew:
\begin{theorem}[Wongkew \cite{Wongkew}]
Let $m$ be the codimension of a real algebraic variety whose defining polynomials are all bounded in degree by $d$. And let $B$ be an arbitrary $n$-ball in $\mathbb{R}^n$ with radius $R$. There exist constants $c_i$ which depend only on $n$, so that for all positive $\rho$ the following is true: $$\text{vol }((V\cap B)_\rho)\leq\sum_{j=m}^n c_jd^j\rho^j R^{n-j}.$$
\end{theorem}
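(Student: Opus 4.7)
The plan is to prove the bound by induction on the ambient dimension $n$, using a slicing argument combined with B\'ezout-type degree counts. The base case $n=1$ is immediate: a nonzero real polynomial of degree at most $d$ has at most $d$ real roots, so $V\cap B$ is a finite set of at most $d$ points and $(V\cap B)_\rho$ is a union of at most $d$ intervals of length $2\rho$, giving the claimed $c_1 d\rho$ bound.

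For the induction step, I would slice $B$ by a family of parallel hyperplanes $\Pi_t=\{x_n=t\}$, $t\in[-R,R]$. For a.e.\ $t$, outside a finite set of critical values of $x_n|_V$, the slice $V\cap\Pi_t$ is a real algebraic variety inside $\Pi_t\cong\mathbb{R}^{n-1}$ of codimension $m$, still cut out by polynomials of degree at most $d$. Applying the inductive hypothesis in dimension $n-1$ to each slice and integrating over $t$ via Fubini yields
\[
\int_{-R}^{R}\mathrm{vol}_{n-1}\bigl((V\cap\Pi_t\cap B)_\rho\bigr)\,dt \;\leq\; \sum_{j=m}^{n-1} c'_j\, d^j \rho^j R^{n-j},
\]
which produces all of the terms of the target sum with index $j\leq n-1$. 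The $n$-dimensional neighborhood $(V\cap B)_\rho$ differs from this Fubini integral only near the critical values of $x_n|_V$, where sheets of $V$ become tangent to the slicing direction and nearby slices' $\rho$-neighborhoods overlap nontrivially in the $x_n$ direction.

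To absorb the contribution from these critical values, I would use that the critical locus $C\subset V$ is itself an algebraic variety of codimension at least $m+1$, cut out by the equations of $V$ together with the appropriate minors of the Jacobian, all of degree $\lesssim d$; its set of critical values in the $x_n$-axis has cardinality at most $d^n$ by B\'ezout. Applying the inductive hypothesis in codimension $m+1$ to $C$, together with the trivial bound $\mathrm{vol}_n((V\cap B)_\rho)\leq \mathrm{vol}_n(B)$, produces the remaining $j=n$ term $c_n d^n\rho^n$ and refills any intermediate terms not captured by the Fubini step. The main obstacle is keeping the constants $c_j$ depending only on $n$ throughout the induction: this requires verifying that forming critical loci, resultants, and discriminants preserves the polynomial-degree bound up to factors depending only on $n$, so that the induction closes without accumulating extra powers of $d$ in the constants.
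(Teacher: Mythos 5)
The paper does not prove this statement; it cites Wongkew and uses the bound as a black box, so there is no paper proof to compare against. Judging your sketch on its own terms, I think there is a genuine gap in the induction step that the proposed fix does not repair.

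The central problem is that the Fubini quantity
\[
\int_{-R}^{R}\mathrm{vol}_{n-1}\bigl((V\cap\Pi_t\cap B)_\rho\bigr)\,dt,
\]
where each $\rho$-neighborhood is taken \emph{inside} the slice $\Pi_t\cong\mathbb{R}^{n-1}$, is a \emph{lower} bound on $\mathrm{vol}_n\bigl((V\cap B)_\rho\bigr)$, not an upper bound or a first approximation. A point $(y,t')\in(V\cap B)_\rho$ need only be $\rho$-close to some $(z,t)\in V\cap B$ with $|t-t'|<\rho$; the slice $V\cap\Pi_{t'}$ at the same height may be empty, or its nearest point to $y$ may be much farther than $\rho$. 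Wherever the slicing direction is close to tangent to $V$, the slice-wise neighborhoods systematically undercount, and the undercount is not confined to a thin layer of slices near the critical values.

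The proposed correction, bounding the missing volume by the $\rho$-tube of the critical locus $C$ (of codimension $\geq m+1$), is off by a full power of $\rho$. Take $V$ the unit circle in $\mathbb{R}^2$, so $n=2$, $m=1$, $d=2$, with $B$ a disc containing $(V)_\rho$. The exact tube volume is $4\pi\rho$, while the slice-wise Fubini integral equals $8\rho$; the deficit is of order $\rho$. Here $C=\{(0,\pm1)\}$ and $\mathrm{vol}_2\bigl((C)_\rho\bigr)=O(\rho^2)$, too small by a factor of $\rho$. Worse, the deficit is not even localized near $t=\pm1$: the true slice measure at height $t$ is $\frac{4\rho}{\sqrt{1-t^2}}$, so the deficit density $4\rho\bigl(\frac{1}{\sqrt{1-t^2}}-1\bigr)$ is strictly positive on all of $|t|<1$ and contributes across the full range, not just a $\rho$-width window. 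No $\rho$-tube around a lower-dimensional algebraic subset can absorb it. To make a slicing induction close one would need a genuinely quantitative stratification of $V$ by its angle to the slicing direction, with a refined estimate on each stratum (or an average over slicing directions), or else an entirely different mechanism such as an integral-geometric Cauchy--Crofton count or a B\'ezout-based covering-by-balls argument. As written, the inequality $\mathrm{vol}_n\bigl((V\cap B)_\rho\bigr)\lesssim(\text{Fubini integral})+\mathrm{vol}_n\bigl((C\cap B)_\rho\bigr)$ is false, and the induction does not close.
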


For the next proposition, we need a technical condition on $e_1,e_2,e_3$ and $e_4$. Assume $$\Phi(x,v,t)=\begin{bmatrix}
    (x,v)\cdot (\beta^\ast_1,\gamma^\ast_1)\\ (x,v)\cdot (\beta^\ast_2,\gamma^\ast_2)
\end{bmatrix}$$ in this new coordinates. Of course each coordinate function of $\beta^\ast_i, \gamma^\ast_i$ is a linear combination of coordinate functions of $\beta_i$'s and $\gamma_i$'s. We require that $\gamma_1^\ast(t), \gamma_2^\ast(t)$ are independent for each $t$. This holds true for almost every choice of the basis vectors $(e_i)$. This condition is necessary to ensure that given a maximally $\delta$ separated set $V$ of $v$ and given a fixed $x$, the set $\{\Phi(x,v,t)|\,v\in V\}$ is also maximally $\sim\delta$ separated.

\begin{proposition}\label{weak}
Given $E\in\mathbb{Z}^+$ and $\epsilon>0$, there is a constant $C(E,\epsilon)>0$ depending on $E$ and $\epsilon$ only such that for every collection $\mathbb{T}$ of $1\times\delta$ tubes with $\delta$ separated directions and any semialgebrac set $E\subset\mathbb{R}^3$ of complexity at most $E$, we have $$\#\{ T\in\mathbb{T}: |T\cap S|\geq\lambda |T|\} \leq C(E,\epsilon) |S|\delta^{-2-\epsilon}\lambda^{-N}.$$
\end{proposition}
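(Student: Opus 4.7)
The goal is to upgrade Lemma \ref{weakha} by replacing $|S_\delta|$ with $|S|$ on the right-hand side. The main tool is Wongkew's theorem applied to the topological boundary of $S$. Since $S$ is semialgebraic of complexity at most $E$, its boundary $\partial S$ lies in a real algebraic variety of codimension at least one and degree $O_E(1)$. Applying Wongkew with $n=3$, $m=1$, $R=1$ gives
\[
|(\partial S)_\delta \cap [0,1]^3| \leq C_E\,\delta,
\]
and since $S_\delta \subseteq S \cup (\partial S)_\delta$ we conclude $|S_\delta| \leq |S| + C_E\delta$.

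From here the proof splits into two cases. When $|S|\geq\delta$, the above Wongkew bound gives $|S_\delta| \leq (C_E+1)|S|$, so Lemma \ref{weakha} immediately yields the desired estimate after absorbing constants into $C(E,\epsilon)$. When $|S|<\delta$, a direct substitution gives only $\#\mathbb{T} \lesssim \delta^{-1-\epsilon}\lambda^{-N}$, which is weaker than the target bound by a factor of $\delta/|S|$. To close this gap I follow the rescaling argument of \cite{Katz}. Observe first that the density condition $|T\cap S|\geq \lambda|T|$ forces $|S|\geq \lambda\delta^2$, so we may assume $\lambda\delta^2 \leq |S| < \delta$. Set $\delta' := |S| \geq \lambda\delta^2$ and thicken each $1\times\delta$ tube in $\mathbb{T}$ to a $1\times\delta'$ tube sharing the same central curve. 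The thickened tube inherits the density condition $|T'\cap S|\geq \lambda'|T'|$ with $\lambda' = \lambda(\delta/\delta')^2$, and the first case applied at scale $\delta'$ (where now $|S|\geq\delta'$ holds trivially by construction) bounds the number of such thickened tubes by $C(E,\epsilon)|S|(\delta')^{-2-\epsilon}(\lambda')^{-N}$. A direction-separation pigeonhole---at most $O((\delta'/\delta)^2)$ many $\delta$-tubes with $\delta$-separated directions fit inside a single $1\times\delta'$ tube---then transfers the bound back to $\#\mathbb{T}$, and unwinding the powers of $\delta'/\delta$ gives exactly the desired inequality.

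The principal obstacle is verifying that this rescaling argument, which is essentially combinatorial in \cite{Katz} for straight tubes, carries over faithfully to our curved tubes. The technical assumption preceding this proposition---that $\gamma_1^*(t)$ and $\gamma_2^*(t)$ be linearly independent for every $t$---is precisely what ensures that $\delta$-separated direction labels $v$ correspond to $\sim\delta$-separated geometric directions of the curves $t\mapsto\Phi(x,v,t)$ at every fixed time, so the direction-counting in the Katz--Rogers pigeonhole transfers with only cosmetic changes. The remaining technical points---checking that thickening preserves the containment and density relations at each $t$, that the central curve of a $\delta$-tube lies uniformly within $\delta'$ of that of the enclosing $\delta'$-tube over $t\in[0,1]$, and that the Bezout-type multiplicity bounds from the proof of Proposition \ref{firstone} survive the coarsening---are precisely what the paper alludes to by \emph{some technicalities which can be checked}.
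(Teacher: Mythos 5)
Your first case ($|S|\geq\delta$) is fine and matches the first step of the paper's argument: Wongkew's theorem applied to $\partial S$ gives $|(\partial S)_\delta|\lesssim_E\delta\lesssim_E |S|$, so $|S_\delta|\lesssim_E|S|$ and Lemma~\ref{weakha} immediately yields the estimate.

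The regime $\lambda\delta^2\leq|S|<\delta$ is the real content of this proposition, and there your proposed ``rescaling'' argument breaks down; the inequalities are being used in mutually inconsistent directions. You set $\delta'=|S|$, which in this regime satisfies $\delta'<\delta$. The resulting $1\times\delta'$ ``tubes'' are therefore \emph{thinner} than the original $1\times\delta$ tubes, not thicker. That inverts all three quantitative steps you invoke: (i) a thinner subtube $T'\subset T$ satisfies $|T'\cap S|\leq|T\cap S|$, not $\geq$, so there is no inheritance of the form $|T'\cap S|\geq\lambda'|T'|$; (ii) your $\lambda'=\lambda(\delta/\delta')^2>\lambda$ can exceed $1$ (e.g.\ $\delta=0.1$, $\delta'=0.01$, $\lambda=1/2$ gives $\lambda'=50$), which is impossible for a density; and (iii) the pigeonhole factor $(\delta'/\delta)^2$ is now strictly less than $1$, i.e.\ ``fewer than one $\delta$-tube fits inside a $\delta'$-tube,'' which is vacuous. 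Conversely, if you flip $\delta'>\delta$ so that thickening and the pigeonhole make sense, then you no longer have $|S|\gtrsim\delta'$ and the Wongkew step $|S_{\delta'}|\lesssim|S|$ fails; $|S_{\delta'}|$ is only $\lesssim\delta'$, which leaves you back where you started. There is simply no single scale $\delta'$ that simultaneously satisfies $\delta'\lesssim|S|$ (needed for Wongkew) and $\delta'\geq\delta$ (needed for the covering pigeonhole) when $|S|<\delta$. Your exponent bookkeeping only closes because the sign error in $(\delta'/\delta)^{2N-\epsilon}$ exactly compensates for the incorrectly inverted pigeonhole and density factors.

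What the paper does instead is decouple the Wongkew scale from the tube scale. It goes all the way down to $\eta=\delta^3$, for which $|S_\eta|\lesssim_E|S|$ holds unconditionally because $|S|\geq\lambda\delta^2\geq\delta^3$, and then proves the harder statement $\#\mathbb{T}_\lambda\lesssim|S_\eta|\delta^{-2-\epsilon}\lambda^{-N}$ directly. This requires replacing each $1\times\delta$ tube not by a single $\eta$-subtube but by a \emph{bush} of $O((\delta/\eta)^2)$ many $\eta$-tubes with $\eta$-separated directions, then dyadically pigeonholing over the densities $|Y\cap S|/|Y|$ of the bush tubes and applying Lemma~\ref{weakha} at scale $\eta$ to the dominant dyadic class. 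The separation-in-direction of the bush tubes is precisely what lets Lemma~\ref{weakha} be applied at the finer scale without losing the factor $(\delta/\eta)^{2+\epsilon}$ that ruins the naive shrinking argument. If you want to close the case $|S|<\delta$, you should follow this bush-and-pigeonhole route rather than a single-scale thickening/shrinking.
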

\begin{proof}
It would be nice if there is an upper bound of $|S_\delta|$ in terms of $|S|$. First we may assume without loss of generality that $|S|\geq\lambda\delta^2$. Let $\eta>0$ be arbitrary, we have $|S_\eta|\leq |S|+|(\partial S)_\eta|$ where $\partial S$ is the boundary of $S$. By the Milnor-Thom theorem \ref{Milnor}, $(\partial S)_\eta$ is contained in the $\eta$ neighborhood of at most $C(E)$ hypersurfaces of degree at most $E$. Wongkew's result cited above then shows that $|(\partial S)_\eta|\leq C(E)\eta$. Since $|S|\geq\delta^3$, if we take $\eta=\delta^3$, we then have $|S_\eta|\leq C(E)|S|$.

Therefore, it suffices to show that under the given assumptions in proposition \ref{weak}, we can upgrade lemma \ref{weakha} to: 
\begin{equation}\label{key}\#(\{T\in\mathbb{T}:|T\cap S|\geq\lambda|T|\})\leq C(E,\epsilon)|S_{\eta}|\delta^{-2-\epsilon}\lambda^{-N}.\end{equation} 

Here we need to make an elementary observation: we define a bush to be a collection of $1\times\eta$ curved tubes that have $\eta$ separated directions. Then there exists an absolute constant $K$, such that for any given $1\times\delta$ tube $T_{x,v,[a,a+1],\delta}$, we can find $\leq K$ number of bushes that together cover $T_{x,v,[a,a+1],\delta}$, with each bush containing $\leq(\delta/\eta)^2$ number of curved $1\times\eta$ tubes. Furthermore, the direction of each $\eta$ tube is within $\delta$ of the direction of the thicker tube $T_{x,v,[a,a+1],\delta}$. To see why, note that the collection of tubes $T_{x,v,[a,a+1],\eta}$ for $\eta$ in an $\eta$ separated subset of $B(v,\delta)$ together covers $T_{x,v,[a+1-1/C,a+1],\delta/C}$ for some absolute constant $C$ large enough. $T_{x,v,[a,a+1],\delta}$ can be covered by $\lesssim 1$ number of such radius $\frac{\delta}{C}$ tubes, with different $x$'s but the same $v$.

Denote $c=1/K$. Let $\mathbb{T}_\lambda=\{T\in\mathbb{T}:|T\cap S|\geq\lambda|T|\}$, and for each $T\in\mathbb{T}_\lambda$, we can find a bush $\mathbb{Y}_T$ such that $\sum_{Y\in\mathbb{Y}_T}|Y\cap S|\geq c\lambda\delta^2$. Let $\mathbb{Y}=\bigcup_{T\in\mathbb{T}_\lambda}\mathbb{Y}_T$, and we partition $\mathbb{Y}$ into subsets $\mathbb{Y}_k=\{Y\in\mathbb{Y},2^{-k}\lambda|Y|\leq |Y\cap S|<2^{-k+1}\lambda|Y|\}$, $k\geq\log_2\lambda, k\in\mathbb{Z}$. Note that on the one hand, we sum to get \begin{equation}\label{1}\sum_{k\geq\log_2\lambda}\sum_{Y\in\mathbb{Y}_k}|Y\cap S|=\sum_{T\in\mathbb{T}_\lambda}\sum_{Y\in\mathbb{Y}_T}|Y\cap S|\geq c\lambda\delta^2\#\mathbb{T}_\lambda.\end{equation} On the other hand, for each $k\geq\log_2\lambda$ we have $$\sum_{Y\in\mathbb{Y}_k} |Y\cap S|<\sum_{T\in\mathbb{T}_\lambda}\sum_{Y\in\mathbb{Y}_T}2^{-k+1}\lambda|Y|\leq 2^{-k+1}\lambda\delta^2\#\mathbb{T}_\lambda.$$ Here the second inequality is because $\#\mathbb{Y}_T\leq(\delta/\eta)^2$. This means that there is a large constant $N$ such that $$\sum_{k\geq N}\sum_{Y\in\mathbb{Y}_k}|Y\cap S|<\frac{c}{2}\lambda\delta^2\#\mathbb{T}_\lambda.$$ Along with equation \ref{1}, we see that there exists some $k\leq N$ such that $$\sum_{Y\in\mathbb{Y}_k}|Y\cap S|>c(\log_2\lambda^{-1}+N)^{-1}\lambda\delta^2\#\mathbb{T}_\lambda.$$ But $\sum_{Y\in\mathbb{Y}_k}|Y\cap S|\leq 2^{-k+1}\lambda|Y|\#\mathbb{Y}_k\leq 2^{-k+1}\lambda\eta^2\cdot(C(E,\epsilon)|S_\eta|\eta^{-2-\epsilon}(2^{-k}\lambda)^{-N})$, from which we conclude $\#\mathbb{T}_\lambda\leq C(E,\epsilon)\delta^{-2-3\epsilon}\lambda^{-N}|S_\eta|$, recalling $\eta=\delta^{3}$ and $C(E,\epsilon)$ changes from line to line. The $\log_2 \lambda^{-1}$ factor is irrelevant as we may assume $\lambda\geq\delta$. This concludes the proof of equation \ref{key} and the proof of the proposition.
\end{proof}

\section{The Polynomial Partitioning Argument}
Once we have the polynomial Wolff axioms, we can use polynomial partitioning to derive the necessary Kakeya bound. The original argument was invented by Guth to make progress in the restriction conjecture, but there have been many improvements since then. In this paper we apply the argument given by Guth and Zahl in \cite{r4}. Since the adaptation is completely straightforward, we merely sketch how their method works without reproducing the proofs of every single lemma.

First, we record some definitions:

\begin{definition}[Grain]
    A grain of complexity $d$ in $\mathbb{R}^3$ is the $C\delta$ neighborhood of a real algebraic variety in $\mathbb{R}^3$ of dimension 2 and complexity $\leq d$.
\end{definition}

Note that each grain is also a semialgebraic set. 

\begin{definition}[Grain decomposition]
Let $Q$ be a collection of disjoint $\delta$ cubes in $\mathbb{R}^3$, a grain decomposition of degree $d$ of error $\epsilon$ is a set of grains $\mathcal{G}$, each of which has complexity $\leq d$, such that the following holds:
\begin{itemize}
    \item For each grain $G\in\mathcal{G}$, there is a subset $Q_G$ of $Q$ such that $\delta$ cube in $Q_G$ is contained in $G$, the $Q_G$'s are disjoint for different $G$. Furthermore we have $\sum_{G\in\mathcal{G}}\# Q_G\geq\delta^{\epsilon}\# Q$ and for each grain $G\in\mathcal{G}$, we have $\delta^{\epsilon}\frac{\#Q}{\#\mathcal{G}}\leq \#Q_G \leq \delta^{-\epsilon}\frac{\#Q}{\#\mathcal{G}}$.
    \item If $T$ is a curved tube, then $\#\{G\in\mathcal{G}, \exists 
 Q\in Q_G\text{ such that } Q\cap T\not=\emptyset\}\leq C_\epsilon\delta^{-\epsilon}\#\mathcal{G}^{\frac{1}{3}}.$
\end{itemize}
\end{definition}

In \cite{r4}, Guth and Zahl proved the existence of a grain decomposition by repeatedly using polynomial partitioning. Their proof was written for the straight case but adapts without change to our tubes, since our tubes are given by polynomials of bounded degree. 

\begin{proposition}[Existence of a grains decomposition\cite{r4}]
    Let $Q$ be a collection of disjoint $\delta$ tubes, then for any $\epsilon>0$, there is a grain decomposition of degree $d(\epsilon)$ and error $\epsilon$.
\end{proposition}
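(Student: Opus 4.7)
The plan is to adapt the iterative polynomial partitioning scheme of Guth and Zahl \cite{r4} to our curved tubes. The grain decomposition will be constructed by an algorithm that at each stage performs a polynomial partitioning followed by a cellular-versus-algebraic dichotomy, halting after $O_\epsilon(1)$ steps. Throughout, the crucial input specific to our problem is that every curve $t \mapsto \Phi(x,v,t)$ has coordinates that are polynomials in $t$ of degree bounded by a constant $D_0$ depending only on $P_t$.

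First I would apply Guth's polynomial partitioning theorem to the indicator sum $\sum_{Q \in \mathcal{Q}} \chi_Q$, obtaining a polynomial $P$ of degree $D=D(\epsilon)$ such that the open cells of $\mathbb{R}^3 \setminus Z(P)$ number $O(D^3)$ and each contains at most $|\mathcal{Q}|/D^3$ cubes. Either at least half of the cubes are contained in cells (the cellular case) or at least half lie in the $C\delta$-neighborhood $Z(P)_{C\delta}$ (the algebraic case). In the cellular case I recurse inside each cell; the key observation is that each curved tube $T$ can enter at most $O(D)$ cells, because the restriction of $P$ to the central curve of $T$ is a one-variable polynomial of degree $\leq D\cdot D_0$ and therefore has at most $D\cdot D_0$ real zeros. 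In the algebraic case I retain $Z(P)$ as a candidate grain template, restrict attention to $Z(P)_{C\delta}$, and iterate the partitioning relative to $Z(P)$ with a second polynomial of bounded degree. Since each algebraic step cuts the dimension of the relevant variety by one, after $O_\epsilon(1)$ iterations one reaches $C\delta$-neighborhoods of two-dimensional varieties of complexity $d(\epsilon)$, which are declared to be the grains.

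The first property of the grain decomposition is then obtained from the pigeonhole losses accumulated in each iteration; the total loss is at worst $\delta^{-O(\epsilon)}$ after $O_\epsilon(1)$ steps, and the comparable-size condition on $\#\mathcal{Q}_G$ is enforced by a further dyadic pigeonholing on the cell sizes. For the second property, namely $\#\{G \in \mathcal{G}: G \cap T \neq \emptyset\} \lesssim_\epsilon \delta^{-\epsilon}(\#\mathcal{G})^{1/3}$, the polynomial Wolff axiom of proposition \ref{weak} is invoked grain by grain: for a grain $G$ of bounded complexity, proposition \ref{weak} bounds the number of tubes $T$ with $|T \cap G| \geq \lambda|T|$ by $C_\epsilon |G|\delta^{-2-\epsilon}\lambda^{-N}$, and combining with the comparable-size condition and a Hölder-type double counting yields the $(\#\mathcal{G})^{1/3}$ bound exactly as in \cite{r4}.

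The main obstacle is confirming that the Bezout-type cell-entry count remains linear in $D$ in the curved setting, and that nothing in the algebraic step uses flatness of straight tubes in an essential way. The first concern is handled by the polynomial-in-$t$ bound recorded above. The second concern is handled by noting that the grains are semialgebraic sets of bounded complexity, so that proposition \ref{weak} applies to them uniformly in $\delta$; this is precisely the reason we expended effort in section 2 to prove a polynomial Wolff axiom with a semialgebraic set $S$ of arbitrary bounded complexity rather than just a convex body. Beyond these two checks, the Guth--Zahl argument transfers without modification, and the output is a grain decomposition with degree $d(\epsilon)$ and error $\epsilon$ as required.
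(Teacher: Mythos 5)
Your overall plan---apply Guth's polynomial partitioning, run a cellular-versus-algebraic dichotomy, iterate $O_\epsilon(1)$ times, and verify a Bezout-type count for the cells a curved tube enters---is the right one and matches what the paper has in mind. The paper itself offers no proof beyond the citation to \cite{r4} together with the one-sentence remark that the argument transfers because the tubes are given by polynomials of bounded degree; your observation that the restriction of the partitioning polynomial $P$ of degree $D$ to a central curve is a one-variable polynomial of degree $\leq D\cdot D_0$, hence has $\leq D\cdot D_0$ zeros, is precisely the check that makes this remark correct.

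There is, however, one genuine misstep. You claim that the second property of the grain decomposition, namely
\[
\#\{G\in\mathcal{G}:\ \exists\, Q\in Q_G,\ Q\cap T\neq\emptyset\}\lesssim_\epsilon \delta^{-\epsilon}(\#\mathcal{G})^{1/3},
\]
is obtained by invoking Proposition \ref{weak} grain by grain. That is the wrong tool. Proposition \ref{weak} is a statement of the form ``given a fixed semialgebraic set $S$, at most $C_\epsilon |S|\delta^{-2-\epsilon}\lambda^{-N}$ tubes have $|T\cap S|\geq\lambda|T|$''; it bounds the number of \emph{tubes} meeting a fixed set in terms of the set's \emph{volume}. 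It says nothing about how many \emph{grains} a single fixed tube can meet, and the H\"older/double-counting deduction you gesture at does not produce the $(\#\mathcal{G})^{1/3}$ bound from it. In the paper's actual scheme, Proposition \ref{weak} is not used to build the grain decomposition at all: it enters only afterwards, in Lemma \ref{secondone} at equation \ref{fromw}, to bound $N_i$. The second property of the grain decomposition is instead a purely algebraic/combinatorial byproduct of the partitioning iteration itself: after $m$ rounds of degree-$D$ partitioning in $\mathbb{R}^3$ one has $\approx D^{3m}$ cells (hence $\approx D^{3m}$ grains up to pigeonholing), while by your Bezout count a single tube enters $\lesssim (D D_0)^m$ of them per round, so the tube meets $\lesssim (D D_0)^m\approx (\#\mathcal{G})^{1/3}$ grains up to the $\delta^{-\epsilon}$ loss. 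In short: your cellular Bezout count is exactly the right input to the second property; you should carry it through the iteration rather than reaching for Proposition \ref{weak}, which belongs to the next stage of the argument.
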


We will also need the following theorem is due to Milnor and Thom; it gives a bound on the number of connected components of a semialgebraic set in terms of its complexity. 
\begin{theorem}[Milnor-Thom Theorem]\label{Milnor} Let $f_i(x_1,...,x_n)$ be a finite collection of real polynomials over $x_1,...,x_n$, let set $S$ be the semialgebraic set given by $f_i\geq 0$ for all $i$, then the number of connected of $S$ is at most $\frac{1}{2}(D+2)(D+1)^{n-1}$, where $D=\sum_i\deg f_i$. 
\end{theorem}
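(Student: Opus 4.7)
The plan is to prove the Milnor--Thom bound via the classical Morse-theoretic argument of Oleinik--Petrovskii, Thom, and Milnor. First I would perturb: replace each $f_i$ by $f_i + \epsilon_i$ for generic small $\epsilon_i > 0$, and set $S_\epsilon = \{x \in \mathbb{R}^n : f_i(x) \geq -\epsilon_i \text{ for all } i\}$. By Sard's theorem, for a generic choice of $\epsilon$ each hypersurface $\{f_i = -\epsilon_i\}$ is smooth and the family is pairwise transverse, so $\partial S_\epsilon$ is a stratified manifold with corners. Since $S \subset S_\epsilon$ and a small perturbation can only merge components, bounding the number of connected components of $S_\epsilon$ suffices. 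Intersecting with a large ball $B(0,R)$ reduces us to a compact setting, and one recovers the bound for $S$ in the limit as $R \to \infty$ and $\epsilon \to 0$.

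Next I would choose a generic linear functional $\ell : \mathbb{R}^n \to \mathbb{R}$. Every bounded connected component of $S_\epsilon$ achieves a minimum of $\ell$ somewhere on the component; since $\ell$ has no interior critical points, this minimum lies on some stratum of $\partial S_\epsilon$, where it is a critical point of $\ell$ restricted to that stratum. It therefore suffices to bound the total number of such critical points across all strata. On the stratum indexed by a subset $\{i_1, \ldots, i_k\}$, the critical condition is the Lagrange system asserting $\nabla \ell \in \mathrm{span}(\nabla f_{i_1}, \ldots, \nabla f_{i_k})$ together with the equations $f_{i_j} = -\epsilon_{i_j}$. This is a polynomial system whose isolated solutions are counted by Bezout's theorem in terms of the degrees $d_{i_j} = \deg f_{i_j}$.

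Summing the Bezout estimates over all subsets $\{i_1, \ldots, i_k\}$ and using the constraint $\sum_i d_i = D$ produces a bound of the form $\frac{1}{2}(D+2)(D+1)^{n-1}$. A cleaner alternative is to combine the system into the single polynomial $F = \prod_i f_i$ of degree $D$: every connected component of $S$ lies in a component of $\{F \geq 0\}$, and one can then apply Milnor's single-hypersurface bound to a smooth perturbation $\{F = -\eta\}$ for generic small $\eta > 0$, which already delivers essentially the stated constant without the combinatorial bookkeeping over strata.

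The main obstacle is extracting the precise constant $\frac{1}{2}(D+2)(D+1)^{n-1}$ rather than a cruder bound of the form $C(n,D)$; different choices of perturbation and different ways of organizing the Bezout count produce different constants, and matching the stated one requires careful accounting. A secondary technical point is verifying that the perturbation is truly generic so that no component of $S$ is lost or spuriously created during the smoothing; this is standard and follows from applying Sard's theorem to the evaluation map $(x,\epsilon) \mapsto (f_1(x) + \epsilon_1, \ldots, f_N(x) + \epsilon_N)$ and restricting to regular values of $\epsilon$.
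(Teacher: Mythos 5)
The paper does not actually prove this theorem --- it is cited as a known classical result and used as a black box. So the comparison is necessarily against the standard literature rather than against an argument in the paper.

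Your sketch follows the classical Oleinik--Petrovskii--Thom--Milnor strategy (perturb, apply Morse theory to a generic linear functional, count critical points via B\'ezout), which is the right route. However, two steps as written contain logical gaps. First, the reduction ``since $S \subset S_\epsilon$ and a small perturbation can only merge components, bounding the number of components of $S_\epsilon$ suffices'' is a non sequitur: if components of $S$ could merge inside $S_\epsilon$, then $\#\mathrm{comp}(S_\epsilon)$ could be \emph{smaller} than $\#\mathrm{comp}(S)$, so bounding the former would not bound the latter. The correct justification is the opposite of merging: after intersecting with a large ball, $S$ is compact, its finitely many components lie at positive mutual distance, and for $\epsilon$ small relative to that distance each component of $S$ sits inside a \emph{distinct} component of $S_\epsilon$; hence $\#\mathrm{comp}(S) \le \#\mathrm{comp}(S_\epsilon)$. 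Second, the ``cleaner alternative'' via $F = \prod_i f_i$ has the same defect in a worse form: $\{F \ge 0\}$ is generally much larger than $S$ (it contains all points where an even number of the $f_i$ are negative), so distinct components of $S$ can lie in a single component of $\{F\ge 0\}$ and the single-hypersurface bound for $\{F \ge 0\}$ does not control $\#\mathrm{comp}(S)$. To make a single-polynomial argument work one must be more careful, e.g.\ work with the zero set $\{F=0\}$ and a sign-condition decomposition, or keep the full system and do the stratified B\'ezout count you sketched first. Finally, you correctly flag that extracting the precise constant $\frac{1}{2}(D+2)(D+1)^{n-1}$ rather than a coarse $C(n,D)$ requires careful bookkeeping; that part of the proof remains genuinely open in your sketch. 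For the purposes of this paper none of this matters, since only the qualitative bound (polynomial in $D$, uniform in $S$) is used.
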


Now, we will use the grains decomposition to prove a version of the multilinear Kakeya inequality:

\begin{lemma}\label{secondone}
Suppose $\mathbb{T}$ is a collection of tubes of the form $T_{x,v,[0,1],\delta}$ such that $\mathbb{T}$ together has $\delta$ separated directions, we will now show that \begin{equation}\int_{B(0,1)}\bigg(\sum_{T_1\in\mathbb{T},T_2\in\mathbb{T}}\chi_{T_1}\chi_{T_2}|v_1\wedge v_2|^{\alpha}\bigg)^{\beta}\lessapprox\delta^{\frac{5}{2}}|\mathbb{T}|^{\frac{3}{2}}. \end{equation}  Here we take $N$ to be the same positive integer as before, $\alpha=\frac{2N}{3N-2}$ and $\beta=\frac{3N-2}{4N-4}$. We also  assume that $\#\mathbb{T}\sim \delta^{-c}$ for some constant $c\in (1,2)$.
\end{lemma}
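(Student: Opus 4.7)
The plan is to combine the curved polynomial Wolff axiom (Proposition \ref{weak}) with the grain decomposition from Proposition 3.3, following the template of Guth--Zahl's polynomial partitioning argument in \cite{r4}, now weighted by the transversality factor $|v_1\wedge v_2|^\alpha$. The exponents $\alpha=\tfrac{2N}{3N-2}$ and $\beta=\tfrac{3N-2}{4N-4}$ appear to be chosen precisely so that the Wolff loss $\lambda^{-N}$ on the number of tubes concentrated in a grain and the transversality gain $\sigma^\alpha$ balance against each other, producing the $L^{3/2}$-type right-hand side $\delta^{5/2}|\mathbb{T}|^{3/2}$.

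I would begin with a dyadic reduction. Writing $M(x)=\sum_{T_1,T_2}\chi_{T_1}(x)\chi_{T_2}(x)|v_1\wedge v_2|^\alpha$, decompose $B(0,1)$ into $\delta$-cubes and pigeonhole in two parameters: the level $\mu$ of $M$ on a cube, and the dyadic transversality scale $\sigma\in[\delta,1]$ for the pairs retained. This costs only a $\lessapprox 1$ factor and reduces the problem to bounding, for each fixed $\mu$ and $\sigma$, the quantity $\mu^\beta\cdot\#Q_{\mu,\sigma}\cdot\delta^3$, where $Q_{\mu,\sigma}$ is the collection of $\delta$-cubes on which $M$ is of size $\mu$ with contributing pairs at transversality $\sigma$.

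Next I would apply Proposition 3.3 to $Q_{\mu,\sigma}$ to obtain a family of grains $\mathcal{G}$, each a $\delta$-neighborhood of a $2$-dimensional variety of bounded complexity $d(\epsilon)$. Proposition \ref{weak} applied with $S=G$ controls, for every dyadic $\lambda\geq\delta$, the number of tubes that spend a fraction $\lambda$ of their length in $G$:
\begin{equation*}
\#\{T\in\mathbb{T}:|T\cap G|\geq\lambda|T|\}\leq C_\epsilon|G|\delta^{-2-\epsilon}\lambda^{-N}.
\end{equation*}
The second property of the decomposition caps each tube at meeting $\lessapprox\#\mathcal{G}^{1/3}$ grains. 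The third ingredient, which is where Lemma \ref{reduction} plays its role, is the geometric fact that two tubes both substantially contained in the same grain $G$ must have direction vectors whose images under $\Phi(\cdot,v,t)$ are nearly tangent to the same $2$-surface, forcing $|v_1\wedge v_2|$ below a small $\delta$-power depending on the intrinsic scale of $G$; here the non-vanishing of $\det[\gamma_1,\gamma_2,\gamma_1',\gamma_2']$ lets one invert the map $(x,v,t)\mapsto(\Phi,\partial_t\Phi)$ and translate the algebraic containment of a curved tube in a grain into a quantitative algebraic restriction on $v$.

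Combining these inputs, I would bound the integral by summing over grains and over the two dyadic parameters, using the Wolff count $|G|\delta^{-2-\epsilon}\lambda^{-N}$ weighted by the transversality suppression $\sigma^\alpha$, taking the $\beta$-th power, and incorporating the incidence bound $\#\mathcal{G}^{1/3}$ together with the hypothesis $\#\mathbb{T}\sim\delta^{-c}$ for $c\in(1,2)$. The algebra should yield $\delta^{5/2}|\mathbb{T}|^{3/2}$ after substituting the chosen values of $\alpha$ and $\beta$. The main obstacle is the transversality-within-a-grain lemma: for straight tubes this is the classical linear-algebraic observation that a tube lying in a thin slab lies in a codimension-one family of directions, but in our curved setting one must track the non-degenerate jet of $\Phi$ along the tangent planes of the grain and verify that the curvature losses sit comfortably inside the $\lessapprox$ notation. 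Once this geometric lemma is secured, the rest is a direct adaptation of the bookkeeping in \cite{r4}.
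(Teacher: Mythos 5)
Your general scaffold — dyadic pigeonholing in the level $\mu$ and transversality $\sigma$, grain decomposition, the polynomial Wolff axiom (Proposition \ref{weak}), and the incidence bound that each tube meets $\lessapprox\#\mathcal{G}^{1/3}$ grains — matches the paper's proof. But the ``third ingredient'' you identify is not what the paper uses, and I don't think it's true as you've stated it.

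You claim that two tubes both substantially contained in the same grain $G$ must have $|v_1\wedge v_2|$ bounded by a small $\delta$-power, because both central curves are ``nearly tangent to the same $2$-surface.'' In $\mathbb{R}^3$, a grain is the $\delta$-neighborhood of a $2$-dimensional variety $Z$. Two curves lying in $N_\delta(Z)$ that cross at a point $x$ have tangent directions both roughly in the $2$-plane $T_xZ$; within a $2$-plane there is still a full $1$-parameter family of directions, so the two tangents can be fully transversal and $|v_1\wedge v_2|$ need not be small. (Think of two curves crossing at right angles on a plane sitting inside $\mathbb{R}^3$.) So this lemma fails, and nothing replaces the transversality suppression you were counting on. Lemma \ref{reduction}, which you cite here, is used in the paper only in the broad-narrow step of Section 4 to relate $|v_1\wedge v_2|$ to the angle between the curves' tangents — not to suppress transversality inside grains.

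What the paper actually does is substitute a different chain of inequalities in place of your missing lemma. After decomposing tube-grain intersections into connected components of diameter $\approx l_1, l_2$, and localizing to ``subgrains'' of scale $\sim l_1$, the proof uses \emph{three} separate quantitative inputs: (i) the elementary multilinear bound $\int\sum\chi_{T_1}\chi_{T_2}|v_1\wedge v_2|\leq\delta^3\#\mathbb{T}^2$ applied per subgrain to the subtube families $\mathbb{T}_{i,G'}$, which is where the transversality weight $\theta$ enters; (ii) the Wolff axiom plus Wongkew's volume bound, giving $N_i\lessapprox l_1^2\delta^{-1}l_i^{-N}$ as the length/count trade-off; and (iii) a double-count giving a \emph{lower} bound on $l_1$ in terms of $\mu_1,\#A',\#\mathcal{G}',\#\mathbb{T}$. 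Chaining (i)--(iii) with the grain-incidence bound and $\mu\lesssim\max(\mu_1,\mu_2)$ produces the stated inequality with the claimed exponents. Without reconstructing the counting relation (iii), there is no way to convert the Wolff loss $l^{-N}$ into a gain; as written, your sketch does not close.
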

\begin{proof}
To prove this, first notice that we may ignore all tuples $(T_1,T_2)$ with $|v_1\wedge v_2|<\delta^{100}$; by dyadic pigeonholing, we can then find some $\theta\in[\delta^{100},1)$ such that 
\begin{equation}\int_{B(0,1)}\bigg(\sum_{T_1\in\mathbb{T},T_2\in\mathbb{T}}\chi_{T_1}\chi_{T_2}|v_1\wedge v_2|^{\alpha}\bigg)^{\beta}\lessapprox\int_{B(0,1)}\bigg(\sum_{T_1,T_2\in\mathbb{T},\frac{\theta}{2}<|v_1\wedge v_2|\leq\theta}\chi_{T_1}\chi_{T_2}\,\theta^\alpha \bigg)^\beta.\end{equation}

Now we may replace each tube $T$ in $\mathbb{T}$ with a collection of disjoint $\delta$ cubes that intersect $T$, so that $\chi_T$ now denotes the indicator function of the union of a collection of disjoint $\delta$ cubes; in particular each $\chi_T$ restricted to any $\delta$ cube is either constantly 1 or 0. We use $\mathcal{Q}$ to denote the union of all $\delta$ tubes for all $T\in\mathbb{T}.$ Since there are at most $\delta^{-3}$ disjoint delta cubes, we may ignore $\delta$ cubes $Q$ such that \begin{equation}\int_Q\bigg(\sum_{T_1,T_2\in\mathbb{T},\frac{\theta}{2}<|v_1\wedge v_2|\leq\theta}\chi_{T_1}\chi_{T_2}\,\theta^\alpha\bigg)^\beta\leq\delta^{100}.\end{equation} Using the dyadic pigeonholing principle again, we may take a subcollection $A$ of those delta cubes such that there exists some $\gamma>0$ so that \begin{equation}\label{even}\frac{\gamma}{2}\leq \int_Q\bigg(\sum_{T_1,T_2\in\mathbb{T},\frac{\theta}{2}<|v_1\wedge v_2|\leq\theta}\chi_{T_1}\chi_{T_2}\,\theta^\alpha\bigg)^\beta<\gamma\end{equation} for all $Q\in A$ and \begin{equation}\sum_{Q\in\mathcal{Q}}\int_Q\bigg(\sum_{T_1,T_2\in\mathbb{T},\frac{\theta}{2}<|v_1\wedge v_2|\leq\theta}\chi_{T_1}\chi_{T_2}\,\theta^\alpha\bigg)^\beta\lessapprox \sum_{Q\in A}\int_Q\bigg(\sum_{T_1,T_2\in\mathbb{T},\frac{\theta}{2}<|v_1\wedge v_2|\leq\theta}\chi_{T_1}\chi_{T_2}\,\theta^\alpha\bigg)^\beta.\end{equation}
To make calculation cleaner later, we will pick a $\mu>0$ be so that 
\begin{equation}
\gamma=\mu^{2\beta}\theta^{\alpha\beta}\delta^3
\end{equation}
Therefore, we need to show that for every $\epsilon>0$, we have
\begin{equation}\label{goal}
    \#A\mu^{2\beta}\theta^{\alpha\beta}\delta^{3}\lessapprox\delta^{\frac{5}{2}}\#\mathbb{T}^{\frac{3}{2}}.
\end{equation}
Ultimately the proof of above inequality rests on the following Multilinear Kakeya inequality (in our case this can be proven directly): let $\mathbb{T}$ be a collection of curved tubes with $\delta$ separated directions, then by the "transversality" condition as equation \ref{transversality}, we have
\begin{equation}\label{Multi}
    \int\sum_{T_1,T_2\in\mathbb{T}}\chi_{T_1}\chi_{T_2}|v_1\wedge v_2|\leq\delta^3\#\mathbb{T}^2.
\end{equation}
However, to make efficient use of equation \ref{Multi}, we will consider shorter "subtubes" of $T\in\mathbb{T}$, and obtain a tighter upper bound using the polynomial Wolff axiom than simply $\mathbb{T}$.

Namely, we apply the grains decomposition lemma from above to $A$. For any $\epsilon>0$, this gives a collection of grains $\mathcal{G}$ with error $\epsilon$ and complexities bounded above by $d(\epsilon)$.  We can then rewrite our integral as 
\begin{equation}\int_{\cup_{Q\in A}Q}\bigg(\sum_{\substack{T_1\in\mathbb{T},T_2\in\mathbb{T}\\ \frac{\theta}{2}\leq|v_1\wedge v_2|<\theta}}\chi_{T_1}\chi_{T_2}\,\theta^\alpha\bigg)^\beta\lessapprox\sum_{G\in\mathcal{G}}\int_{\cup_{Q\subset G}Q} \bigg(\sum_{\substack{T_1\in\mathbb{T},T_2\in\mathbb{T}\\ \frac{\theta}{2}\leq|v_1\wedge v_2|<\theta}}\chi_{T_1}\chi_{T_2}\,\theta^\alpha\bigg)^\beta.\end{equation}
Note that on the right hand side of above, we are no longer dealing with full length tubes; rather we are dealing with the intersection of tubes and grains, which in general will be sets of smaller diameter. If $K$ is any set and $x\in K$, we denote $CC(K,x)$ the connected component of $K$ containing $x$, and we use $CC(K)$ to denote all connected components of $K$. Apply the dyadic pigeonholing principle again, we can find $l_1,l_2\geq 0$ such that 
\begin{multline}
\sum_{G\in\mathcal{G}}\int_{\cup_{Q\subset G}Q} \bigg(\sum_{\substack{T_1\in\mathbb{T},T_2\in\mathbb{T}\\ \frac{\theta}{2}\leq|v_1\wedge v_2|<\theta}}\chi_{T_1}\chi_{T_2}\,\theta^\alpha\bigg)^\beta\\ \lessapprox \sum_{G\in\mathcal{G}}\int_{\cup_{Q\subset G}Q}\bigg(\sum_{\substack{T_i\in\mathbb{T},\frac{\theta}{2}\leq|v_1\wedge v_2|<\theta\\ K_i\in CC(T_i\cap G),\frac{l_i}{2}\leq\text{diam}K_i<l_i}}\chi_{T_1}\chi_{T_2}\theta^\alpha\bigg)^\beta\end{multline} 
Here for the inner sum on the right hand side, we are summing over $T_1,T_2$, and over $CC(T_1\cap G)$ and $CC(T_2\cap G).$ Assume $l_1\geq l_2$, we cover $\mathbb{R}^3$ with a collection $\mathcal{B}$ of finitely overlapping radius $100l_1$ balls, such that every subset of $\mathbb{R}^3$ with diameter $<l_1$ is contained in one of the balls. We call a set of the form $B\cap G, B\in\mathcal{B},G\in\mathcal{G}$ a subgrain and denote the collection of subgrains $\mathcal{G}'$. For each subgrain $G'$, we denote $Q_{G'}=\{Q\in A|\,Q\subset G'\}$. We may use the dyadic pigeonhole principle on $\#Q_{G'}$ for each subgrain $G'$; after discarding some subgrains $G'$, we may assume that there exists $t>0$ such that
 \begin{equation}\label{2}
    \frac{t}{2}\leq\#Q_{G'}<t
 \end{equation} for each subgrain $G'$, and 
 \begin{equation}\label{3}
     \sum_{G'\in\mathcal{G}'}\#Q_{G'}\gtrapprox\#A.
 \end{equation}
 By equation \ref{even}, we then have 
\begin{multline}
    \label{4}
    \sum_{G\in\mathcal{G}}\int_{\cup_{Q\subset G} Q}\bigg(\sum_{\substack{T_i\in\mathbb{T},\frac{\theta}{2}\leq|v_1\wedge v_2|<\theta\\ K_i\in CC(T_i\cap G),\frac{l_i}{2}\leq\text{diam}K_i<l_i}}\chi_{T_1}\chi_{T_2}\theta^\alpha\bigg)^\beta\\ \lessapprox\sum_{G'\in\mathcal{G}'}\int_{\cup_{Q\in Q_{G'}}Q}\bigg(\sum_{\substack{T_i\in\mathbb{T},\frac{\theta}{2}\leq|v_1\wedge v_2|<\theta\\ K_i\in CC(T_i\cap G),\frac{l_i}{2}\leq\text{diam}K_i<l_i}}\chi_{T_1}\chi_{T_2}\theta^\alpha\bigg)^\beta
\end{multline}
By assumption we can deduce that for each $T\in\mathbb{T}, G\in\mathcal{G}$ and $K\in CC(T\cap G)$, if $\text{diam } K\in[\frac{l_i}{2},l_i)$ for some $i=1,2$, then $K$ is contained in a subgrain $G'$. For each subgrain $G'\in\mathcal{G}'$ that is the intersection of a grain $G$ and a ball $B\in\mathcal{B}$, we define 
 \begin{equation}
     \mathbb{T}_{i,G'}=\bigg\{T\cap G'|T\in\mathbb{T}, \exists K\in CC(T\cap G)\text{ such that}\text{ diam }K\in\Big[\frac{l_i}{2},l_i\Big),K\subset G'\bigg\}.
 \end{equation}
We now apply the dyadic pigeonhole principle on $\#\mathbb{T}_{i,G'}$; there exist $N_1,N_2>0$ such that after discarding some subgrains, for each $G'\in\mathcal{G'}$ and $i=1,2$ we have
\begin{equation}\label{10}
    \frac{N_i}{2}\leq \#\mathbb{T}_{i,G'}<N_i;
\end{equation}
and equation \ref{3}, and hence equation \ref{4} still hold. Now notice that if $T$ is a curved tube and $G$ is a grain, then $T\cap G$ is a semialgebraic set of bounded complexity, hence it has $O(d(\epsilon))$ connected components by the Milnor-Thom theorem. In particular, the number of elements in $CC(T\cap G)$ with diameter in $[\frac{l_i}{2},l_i)$ is bounded above by $O(d(\epsilon))$. Recall the balls $\mathcal{B}$ are finitely overlapping, this implies that each such $K\in CC(T\cap G)$ is contained in $O(1)$ number of subgrains. Hence a double counting argument and the fact that $\mathcal{G}$ is a grain decomposition imply that
\begin{equation}
    N_i\#\mathcal{G}'\lessapprox\#\mathbb{T}\#\mathcal{G}^{\frac{1}{3}},
\end{equation}
hence 
\begin{equation}
    N_i\lessapprox\#\mathbb{T}\#\mathcal{G}'^{-\frac{2}{3}}.
\end{equation}
We now do one more round of dyadic pigeonholing. After discarding some $\delta$ cubes, we may assume that there exist $\mu_1,\mu_2>0$ such that if $Q\in A$ is a $\delta$ cube contained in some subgrain $G'$, and for each $i=1,2$, we have
\begin{equation}
    \frac{\mu_i}{2}<\#\{T\in\mathbb{T}_{i,G'}|\,Q\subset T\}<\mu_i;
\end{equation}
furthermore equation \ref{3} and hence equation \ref{4} still hold. Recall each indicator function $\chi_T$ is either 1 or 0 on a $\delta$ cube in $A$. Equation \ref{even} implies that 
\begin{equation}\label{muu}
\frac{\mu^2}{2}\leq\sum_{T_1,T_2\in\mathbb{T},\frac{\theta}{2}\leq|v_1\wedge v_2|<\theta}\chi_{T_1}(x)\chi_{T_2}(x)<\mu^2
\end{equation} whenever $x\in Q$ for some $Q\in A$. Therefore, we can conclude
\begin{equation}
    \mu\lesssim(\mu_1\mu_2)^{\frac{1}{2}}\leq\max(\mu_1,\mu_2).
\end{equation}
In choosing $\mu_1,\mu_2$ we discarded some $\delta$ cubes contained in each $\mathcal{G}'$, we now do another round of dyadic pigeonholing by discarding some subgrains in $\mathcal{G}'$ to restore equation \ref{2} while maintaining equations \ref{3} and \ref{4}.

We will need one more ingredients. Firstly, by Wongkew's lemma, for each subgrain $G'$, we have $|G'|\lesssim_{\epsilon}l_1^2\delta$. This implies that 
\begin{equation}\label{fromw}
    N_i\lessapprox l_1^2\delta\delta^{-2}l_i^{-N}=l_1^2\delta^{-1}l_i^{-N}.
\end{equation}
Now, we denote $A'$ to be the collection of $\delta$ cubes in $A$ that are contained in some subgrain $G'\in\mathcal{G}'$. Then as a result of all the dyadic pigeonholing above, we have 
\begin{equation}\label{22}
    \text{RHS of }\ref{4}\approx\#A'\mu^{2\beta}\theta^{\alpha\beta}\delta^3. 
\end{equation}

Moreover, for each $i=1,2$ we have 
\begin{align}
    \mu_i\# A'\delta^3 &\lesssim \sum_{G'\in\mathcal{G'}}\sum_{T\in\mathbb{T}_{i,G'}}|T\cap G'| \\
    &\lesssim \#\mathcal{G}'N_i l_i\delta^2\\
    &\lesssim \#\mathcal{G}'^{\frac{1}{3}}\#\mathbb{T}l_i\delta^2
\end{align}

In particular,  since we agreed that $l_1\geq l_2$, this gives

\begin{equation} \label{help}
    l_1\gtrsim \max(\mu_1,\mu_2)\#A'\delta(\#\mathcal{G}')^{-\frac{1}{3}}\#\mathbb{T}^{-1}
\end{equation}

Together with equation \ref{fromw}, we have 
\begin{equation}\label{final}
N_1\lesssim \delta^{-1}l_1^{-(N-2)}\lesssim \max(\mu_1,\mu_2)^{-(N-2)}\#A'^{-(N-2)}\delta^{-(N-1)}\#\mathcal{G}'^{\frac{N-2}{3}}\#\mathbb{T}^{N-2}.
\end{equation}

Lastly, for each subgrain $G'$, we apply equation \ref{Multi} to $\mathbb{T}_{1,G'}$ and $\mathbb{T}_{2,G'}$; this gives us

\begin{equation}
    \int\sum_{T_1\in\mathbb{T}_{1,G'}, T_2\in\mathbb{T}_{2,G'}}\chi_{T_1}\chi_{T_2}|v_1\wedge v_2|\leq\delta^3\prod_i\#\mathbb{T}_{i,G'}.
\end{equation}
This implies by equation \ref{muu}
\begin{equation}\label{28}
    \frac{\#A'}{\#\mathcal{G}'}\mu^2\delta^3\theta\lesssim\delta^3(N_1N_2). 
\end{equation}
We can now collect all the inequalities we have derived.  By equation \ref{22} and equation \ref{goal}, the inequality we are after is: 
\begin{equation}\label{target}
\#A'\mu^{2\beta}\theta^{\alpha\beta}\delta^3\lessapprox\delta^{\frac{5}{2}}\#\mathbb{T}^{\frac{3}{2}}.
\end{equation}
This follows from 
\begin{align}
    \#A'(\#\mathcal{G}')^{-1}\theta\delta^3\mu^2 &\lesssim\delta^3(N_1N_2)\;\;\;\;\;\;\;\;\;\;\;\;\;\;\;\;\;\;\;\text{by }\ref{28}\\
    &\lesssim \delta^3\big(\#\mathbb{T}\#\mathcal{G}'^{-\frac{2}{3}}\big)^{1-\frac{1}{N}} \allowbreak \big(\max(\mu_1,\mu_2)^{-(N-2)}\#A'^{-(N-2)}\delta^{-(N-1)}\#\mathcal{G}'^{\frac{N-2}{3}}\#\mathbb{T}^{N-2}\big)^{\frac{1}{N}} \\ &\;\;\;\;\;\;\;\;\;\;\;\cdot \big(\#\mathbb{T}\#\mathcal{G}'^{-\frac{2}{3}}\big)   \\
    &\lesssim \delta^{3-\frac{N-1}{N}}\#\mathbb{T}^{3-\frac{3}{N}}\allowbreak\mu^{-(1-\frac{2}{N})}\# A'^{-(1-\frac{2}{N})}\mathcal{G}'^{-1}
\end{align}
where for the second line we used equation \ref{final} and for the last line we used the fact that $\mu\lesssim\max(\mu_1,\mu_2)$. This gives
\begin{equation}
    \#A'^{2-\frac{2}{N}}\mu^{3-\frac{2}{N}}\theta\lessapprox\delta^{-(1-\frac{1}{N})}\mathbb{T}^{3-\frac{3}{N}}
\end{equation}
 and finishes the proof, recalling the values of $\alpha$ and $\beta$. 
 \end{proof}

\section{From Multilinear To Linear}
To go from multilinear Kakeya inequality to linear Kakeya inequality, we can simply use the broad narrow argument by Bourgain and Guth in \cite{Bourgain2011}. The end result is the following
\begin{proposition}\label{thirdone}
    Let $\mathbb{T}$ be a collection of tubes of the form $T_{x,v,[0,1],\delta}$ with $\sim\delta$ separated directions $v$'s in $[0,1]^2$ and such that $\#\mathbb{T}\gtrapprox\delta^{-s}$ for some $s\in(1,2]$, then there exists $C>0$ such that \begin{equation}\label{linear}\big|\big|\sum_{T\in\mathbb{T} }\chi\big|\big|_p \leq C\delta^{-2+\frac{4\beta-\frac{1}{2}}{p}}
\end{equation}
whenever $p\geq 2\beta.$ Here and in the proof later $\alpha=\frac{2N}{3N-2}$ and $\beta=\frac{3N-2}{4N-4}$ as previously defined. Here we make the constant $C>0$ explicit so the proof is more easily understandable. 
\end{proposition}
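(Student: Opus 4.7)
My plan is to follow the Bourgain--Guth broad/narrow argument, using Lemma~\ref{secondone} as the multilinear input and closing the general $p$ case via interpolation against the trivial bound $\|f\|_\infty\leq|\mathbb{T}|\lesssim\delta^{-2}$, where $f=\sum_{T\in\mathbb{T}}\chi_T$. Since the target exponent $-2+(4\beta-\tfrac{1}{2})/p$ is linear in $1/p$, log-convexity of $p\mapsto\|f\|_p$ reduces the proof to the endpoint $p=2\beta$, i.e.\ to showing $\|f\|_{2\beta}\lessapprox\delta^{-1/(4\beta)}$; the endpoints $p=2\beta$ and $p=\infty$ then give the stated bound for all $p\geq 2\beta$ by H\"older interpolation.

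To handle the endpoint I would fix $K=\delta^{-\eta}$ for a small $\eta>0$ and partition the direction parameter space into $\sim K^2$ caps $\tau$ of radius $K^{-1}$. Writing $g_\tau=\sum_{v(T)\in\tau}\chi_T$, the standard pointwise inequality reads
\begin{equation*}
f(x)\lesssim K^{O(1)}\max_{\tau_1,\tau_2\text{ transverse}}\sqrt{g_{\tau_1}(x)g_{\tau_2}(x)}+K\max_\tau g_\tau(x),
\end{equation*}
where ``transverse'' means the cap centers are $\gtrsim K^{-1}$ apart. On the broad set this guarantees $|v_1\wedge v_2|\gtrsim K^{-1}$ (this is where the independence of $\gamma_1^\ast,\gamma_2^\ast$ in our chosen coordinate system enters), so
\begin{equation*}
f(x)^{2\beta}\lesssim K^{O(1)}\Bigl(\sum_{T_1,T_2\in\mathbb{T}}\chi_{T_1}(x)\chi_{T_2}(x)|v_1\wedge v_2|^\alpha\Bigr)^\beta.
\end{equation*}
Integrating and applying Lemma~\ref{secondone} gives $\int_{\text{broad}}f^{2\beta}\lessapprox K^{O(1)}\delta^{5/2}|\mathbb{T}|^{3/2}\leq K^{O(1)}\delta^{-1/2}$, which is the desired strength up to the $K^{O(1)}$ factor.

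For the narrow set one has $f\lesssim K\max_\tau g_\tau$, hence $\int_{\text{narrow}}f^{2\beta}\leq K^{2\beta}\sum_\tau\|g_\tau\|_{2\beta}^{2\beta}$. Since each $\mathbb{T}_\tau$ contains only $\lesssim K^{-2}|\mathbb{T}|$ tubes with directions confined to a $K^{-1}$-cap, I would rescale linearly by $K$ in the transverse directions so that $\mathbb{T}_\tau$ becomes, to leading order, a Kakeya configuration at the coarser scale $K\delta$, and then close the estimate by induction on $\delta$. Balancing the broad and narrow contributions fixes an admissible range of $\eta$; taking $\eta$ small gives $\|f\|_{2\beta}\lessapprox\delta^{-1/(4\beta)}$, and the reduction in the first paragraph finishes the proof.

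The principal obstacle I anticipate is making the narrow rescaling rigorous for curved tubes: unlike the straight-tube setting, the curves $\Phi(x,v,\cdot)$ are not exact rescalings of themselves once $v$ is restricted to a small cap, so one has to either treat the curvature correction as a lower-order perturbation or enlarge the inductive class of problems to include all curve-tube families of bounded algebraic complexity that can arise under this rescaling, so that the rescaled subproblem still falls within the inductive hypothesis. Modulo this bookkeeping, the broad/narrow pointwise inequality, the transversality input from Lemma~\ref{secondone}, and the final interpolation step are all essentially standard.
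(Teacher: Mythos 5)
Your high-level skeleton matches the paper's — both are Bourgain--Guth broad/narrow arguments with the multilinear Kakeya bound of Lemma~\ref{secondone} feeding the broad case and $L^\infty$ interpolation absorbing the trivial bound — but the induction variable you pick is different from the paper's, and this difference is exactly what turns your acknowledged obstacle (rescaling curved tubes) into a genuine gap in your version that the paper never has to confront.

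You take $K=\delta^{-\eta}$ and want to close the narrow case by rescaling each cap $\tau$ by a factor $K$ and appealing to the same estimate at the coarser scale $K\delta$, i.e.\ induction on $\delta$. For straight tubes that parabolic rescaling is exact; here the central curves $t\mapsto\Phi(x,v,t)$ are not self-similar under that rescaling, so the rescaled family is no longer of the form $T_{x,v,[0,1],\delta'}$ for the same $\Phi$. You flag this and suggest either treating the curvature correction as a perturbation or enlarging the inductive class, but neither is carried out, and enlarging the class reopens the question of whether Lemma~\ref{secondone} (which relies on Lemma~\ref{integral} for the specific $\Phi$) applies uniformly to the enlarged class. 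That is a real missing step, not bookkeeping.

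The paper instead takes $K$ to be a large \emph{constant} depending only on $p$ and the degrees, fixes $\delta$ throughout, and inducts on the cap radius $\rho\in[\delta,1]$: it proves the stronger statement
\[
\Big\|\sum_{T\in\tau_\rho}\chi_T\Big\|_{L^p}\leq C\,\delta^{-2+\frac{4\beta-\frac12}{p}}\,\rho^{2+\frac{3-\alpha\beta-4\beta}{p}},
\]
with base case $\rho=\delta$ checked directly. In the inductive step, a narrow point has its contribution controlled by at most $10^4$ caps of radius $\rho/K$, and H\"older plus the inductive hypothesis at cap scale $\rho/K$ (same $\delta$, same $\Phi$, no rescaling) gives a factor $K^{\alpha\beta+4\beta-1-2p}$, which is a negative power of $K$ once $p\geq 2\beta$ because $\alpha\beta<1$. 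Taking $K$ large but fixed beats the absolute constant $2\cdot10^4$ and closes the narrow estimate; the broad estimate then follows from the multilinear bound plus interpolation with $L^\infty$, with only a $K^{\alpha\beta}$ loss which is again a constant. Because $K$ is a constant, there is no $\delta^{-O(\eta)}$ slack to absorb, and the proposition is stated without an $\epsilon$-loss.

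So: your reduction to $p=2\beta$ is sound, your broad estimate and transversality input are the right ones, but the narrow case as you set it up requires a curved rescaling that you correctly identify as unresolved. To repair the proposal, replace ``induction on $\delta$ with $K=\delta^{-\eta}$'' by ``induction on the cap scale $\rho$ at fixed $\delta$ with $K$ a large constant,'' proving the $\rho$-dependent strengthened bound above. This is precisely what the paper does, and it makes the narrow case purely combinatorial (H\"older over at most $100K^2$ subcaps plus the inductive hypothesis), with no geometric rescaling of the tube family at all.
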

\begin{proof}

To upper bound $||\sum_{T\in\mathbb{T}}\chi_T||_{L^p}$. We divide $[0,1]^2$ into finitely overlapping caps of radius $\rho\geq\delta$ and use $\tau_\rho$ to denote the collection of tubes in $\mathbb{T}$ with directions lying in a cap; we instead upper bound $||\sum_{T\in\tau_\rho}\chi_T||_{L^p}$ for each cap $\tau_\rho$. Here we use $T\in\tau_\rho$ to denote that the direction of $T$ lies in $\tau_\rho$. We need to show the stronger result:

$$\big|\big|\sum_{T\in\tau_\rho}\chi_T\big|\big|_{L^p}\leq C\delta^{-2+\frac{4\beta-\frac{1}{2}}{p}}\rho^{2+\frac{3-\alpha\beta-4\beta}{p}}.$$

We first notice note that if $\rho=\delta$, then $LHS=\delta^{\frac{2}{p}}$ and $RHS=\delta^{\frac{\frac{5}{2}-\alpha\beta}{p}}$, and the result holds trivially in this case since $\alpha\beta=\frac{2N}{4N-4}>\frac{1}{2}$. This will serve as the base case for the induction. Next, we do induction on $\rho$ in the same way as done by Bourgain and Guth in dealing with the curved Kakeya problem. To do this we break $\tau_\rho$ into smaller caps $\tau_{\rho/K}$ for some large constant $K$ to be chosen later. A point $x\in\mathbb{R}^3$ will be called narrow if there are $\leq 10^4$ caps accounting for more than half of the tubes containing $x$. That is, there is a subset $C(x)$ of $\tau_{\rho/K}$'s with cardinality $\leq 10^4$ such that $$\sum_{\tau_{\rho/K}\in C(x)}|\{T: x\in T, v(T)\in\tau_{\rho/K}\}|\geq\frac{1}{2}|\{T:x\in T\}|.$$ A point $x\in\mathbb{R}^3$ will be called broad otherwise. 

The induction hypothesis directly controls the contribution of the $L^p$ norm of $\sum_{T\in\tau_\rho}\chi_T$
on the narrow subset: if $x$ is narrow, then by definition we have $$\sum_{T\in\tau_\rho}\chi(x)\leq 2\sum_{\tau_{\rho/K}\in C(x)}\sum_{T\in\tau_{\rho/K}}\chi_T(x). $$

Holder's inequality and the fact that $x$ is narrow then imply that 
$$\sum_{T\in\tau_\rho}\chi(x)\leq 2\cdot 10^4\bigg(\sum_{\tau_{\rho/K}\in C(x)}\Big(\sum_{T\in\tau_{\rho/K}}\chi_T(x)\Big)^p\bigg)^{\frac{1}{p}}.$$

Integrate, this gives 

\begin{align*}
\int_{\text{narrow}}\Big(\sum_{T\in\tau_\rho}\chi(x)\Big)^p&\leq (2\cdot 10^4)^p\int_{\text{narrow}}\sum_{\tau_{\rho/K}\in C(x)}\Big(\sum_{T\in\tau_{\rho/K}}\chi_T(x)\Big)^p\\
&\leq (2\cdot 10^4)^p\cdot C^p\sum_{\text{all the }\tau_{\rho/K}}\delta^{-2p+4\beta-\frac{1}{2}}\bigg(\frac{\rho}{K}\bigg)^{2p+3-\alpha\beta-4\beta}\\&\leq (2\cdot 10^4)^p\cdot C^p\cdot 100\cdot K^{\alpha\beta+4\beta-1-2p}\delta^{-2p+4\beta-\frac{1}{2}}\rho^{2p+3-\alpha\beta-4\beta}.
\end{align*}
Here we use the induction hypothesis to derive the second inequality, and the last inequality is because there are at most $100K^2$ caps $\tau_{\rho/K}$. We let $K>0$ be a constant large enough; the induction closes when $x$ is narrow because we assumed $p>2\beta$ and a simple calculation gives $\alpha\beta+4\beta-1-2p<0$. If we take $K>0$ to be a sufficiently large constant, then we can assume that 

$$\int_{\text{narrow}}\Big(\sum_{T\in\tau_\rho}\chi(x)\Big)^p\leq\frac{C^p}{2}\delta^{-2p+4\beta-\frac{1}{2}}\rho^{2p+3-\alpha\beta-4\beta}$$

Next, to deal with the $L_p$ norm for broad points, we need to the following observation: if we have a point contained in the intersection of two curved tubes $T_{x,v,[0,1],\delta}$ and $T_{x',v',[0,1],\delta}$, the point can be expressed as $\Phi(x,v,t)+\epsilon$ and $\Phi(x',v',t)+\epsilon'$, where $\epsilon,\epsilon'$ are vectors of length $\leq\delta$. Remember the notation $b_1(t)=\begin{bmatrix}
    \beta_1(t) \\
    \gamma_1(t)
\end{bmatrix}$ and $b_2(t)=\begin{bmatrix}
    \beta_2(t) \\
    \gamma_2(t)
\end{bmatrix}$ we have 
\begin{multline}\label{transversality}
    \bigg|\frac{d}{dt}(t,\Phi(x,v,t))\wedge\frac{d}{dt}(t,\Phi(x',v',t))\bigg|=\Bigg|\Bigg|\begin{bmatrix}
    1 \\ (x,v)\cdot b_1'(t) \\ (x,v)\cdot b_2'(t)
\end{bmatrix}\wedge \begin{bmatrix}
    1 \\ (x',v')\cdot b_1'(t) \\ (x',v')\cdot b_2'(t)
\end{bmatrix}\Bigg|\Bigg| \\ \geq\Bigg|\Bigg|
    \begin{bmatrix}
        b_1'(t)\\
        b_2'(t)
    \end{bmatrix}\cdot
    \begin{bmatrix}
        x_1-x_1'\\
        x_2-x_2'\\
        v_1-v_1'\\
        v_2-v_2'
    \end{bmatrix}
    \bigg|\bigg|
\end{multline}    
    However, we already know that $|\Phi(x,v,t)-\Phi(x',v',t)|\leq 2\delta$, which means that $$\Bigg|\Bigg|
    \begin{bmatrix}
        b_1(t)\\
        b_2(t)
    \end{bmatrix}\cdot
    \begin{bmatrix}
        x_1-x_1'\\
        x_2-x_2'\\
        v_1-v_1'\\
        v_2-v_2'
    \end{bmatrix}
    \bigg|\bigg|\leq 2\delta. $$ However, lemma \ref{reduction} implies that the matrix $\begin{bmatrix}
        b_1(t)\\
        b_2(t)\\
        b_1'(t)\\
        b_2'(t)
    \end{bmatrix}$ is invertible for all $t$, so if we assume $v$ and $v'$ are $C\delta$ separated for a constant $C>0$ large enough, then we must have $$\Bigg|\Bigg|
    \begin{bmatrix}
        b_1'(t)\\
        b_2'(t)
    \end{bmatrix}\cdot
    \begin{bmatrix}
        x_1-x_1'\\
        x_2-x_2'\\
        v_1-v_1'\\
        v_2-v_2'
    \end{bmatrix}
    \bigg|\bigg|\geq c||(x,v)-(x',v')||$$ for $c>0$ a small constant. This gives the transversality needed for the "broad-narrow" argument.
    
    If $x$ is a broad point, by definition of a broad point and the observation above, we see that for most tuples of tubes in $C(x)$, we have  $v_1(x)\wedge v_2(x)\gtrsim \frac{\rho}{K}$, where $v_1,v_2$ are the unit tangent vectors to the central curves at $x$. Therefore, we have $$\bigg|\sum_{T\in\tau_\rho}\chi_T(x)\bigg|^2\lesssim K^\alpha\rho^{-\alpha}\sum_{T_1,T_2\in\tau_\rho}\chi_{T_1}\chi_{T_2}|v_1(x)\wedge v_2(x)|^\alpha$$ if $x$ is a broad point. We then deduce that \begin{align*}\int_{\text{Broad}}\bigg|\sum_{T\in\tau_\rho}\chi_T(x)\bigg|^{2\beta}&\lesssim\int_{\text{Broad}} K^{\alpha\beta}\rho^{-\alpha\beta}\bigg(\sum_{T_1,T_2\in\tau_\rho}\chi_{T_1}\chi_{T_2}|v_1(x)\wedge v_2(x)|^{\alpha}\bigg)^{\beta}\\&\lesssim K^{\alpha\beta}\rho^{-\alpha\beta}\delta^{\frac{5}{2}}\rho^3\delta^{-3}=K^{\alpha\beta}\rho^{3-\alpha\beta}\delta^{-\frac{1}{2}}\end{align*} where the second inequality comes from the multilinear bound. Furthermore, we have the trivial bound $$\bigg|\bigg|\sum_{T\in\tau_\rho}\chi_T(x)\bigg|\bigg|_{L^\infty}\lesssim(\rho\delta^{-1})^2.$$
Interpolating, we get $$\int_{\text{Broad}}\bigg|\sum_{T\in\tau_\rho}\chi_T(x)\bigg|^{p}\lesssim K^{\alpha\beta}\rho^{2p+3-\alpha\beta-4\beta}\delta^{-2p+4\beta-\frac{1}{2}}.$$
Since $K$ has been chosen to be a constant independent of $C$, if we have chosen $C>0$ to be a sufficiently large constant, then we have 
$$\int_{\text{broad}}\Big(\sum_{T\in\tau_\rho}\chi(x)\Big)^p\leq\frac{C^p}{2}\delta^{-2p+4\beta-\frac{1}{2}}\rho^{2p+3-\alpha\beta-4\beta}.$$
Together the narrow and broad estimates close the induction. This concludes the argument of Bourgain and Guth. 
\end{proof}

\section{Finishing the Argument}

Our last step is converting the $L^p$ Kakeya bound into a restricted projection bound. The argument uses a discretized version of the Frostman's lemma, similar arguments are very common in the literature, see for example section 2.1 of  \cite{plane} and section 1.2 of \cite{zahl}. The discretization procedure we use is a combination of both approaches. First we need a discrete version of Frostman's lemma; a proof is contained in lemma 3.13 of \cite{fassler}:

\begin{definition}
    Let $\delta,s>0$ and $P\subset\mathbb{R}^n$ be a finite $\delta>0$ separated set, we say $P$ is a $(\delta,s)$ separated set for every ball $B(x,r)$, we have $\#(P\cap B(x,r))\lesssim\big(\frac{r}{\delta}\big)^s$.
\end{definition}

\begin{lemma}[Frostman's Lemma, Discrete]\label{Frostman}
Let $\delta,s>0$ and $B\subset\mathbb{R}^n$ be a Borel measurable subset with $a:=H^s_\infty(B)>0$, then there is a $(\delta,s)$ subset $P\subset B$ such that $\#P\gtrsim a\delta^{-s}.$ Here $H^s_\infty$ denotes the Hausdorff content.
\end{lemma}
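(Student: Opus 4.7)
The plan is to combine the classical continuous Frostman lemma with a dyadic discretization and a pigeonhole-plus-subsampling argument. First, I would apply the classical Frostman lemma in $\mathbb{R}^n$ to produce a nonzero Borel measure $\mu$ supported on $B$ with $\mu(\mathbb{R}^n)\gtrsim a$ and $\mu(B(x,r))\leq Cr^s$ for every ball. Overlay a dyadic $\delta$-grid on $\mathbb{R}^n$, let $\mathcal{Q}$ be the collection of cubes of side $\delta$ that meet $\supp\mu$, and from each $Q\in\mathcal{Q}$ pick a point $p_Q\in Q\cap\supp\mu\subset B$ weighted by $w(p_Q):=\mu(Q)$. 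After shrinking $\delta$ by a factor of $C(n)$ if necessary, the resulting set $P_0$ is genuinely $\delta$-separated, and one has $\sum_{p\in P_0}w(p)\gtrsim a$, $w(p)\leq C\delta^s$ for every $p$, and $\sum_{p\in P_0\cap B(x,r)}w(p)\leq\mu(B(x,r+\sqrt{n}\delta))\lesssim r^s$ for every $r\geq\delta$.

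Next, I would dyadically pigeonhole the weights. Points with $w(p)<a/(2\#P_0)\gtrsim a\delta^n$ contribute at most $a/2$ in total, so the relevant weights lie in the range $[ca\delta^n,\,C\delta^s]$, which spans $O(\log\delta^{-1})$ dyadic scales. Consequently there is a level $w_0$ and a subcollection $P_1\subset P_0$ on which $w(p)\sim w_0$ with $\#P_1\gtrapprox a/w_0$, and dividing the weighted ball bound through by $w_0$ yields $\#(P_1\cap B(x,r))\lesssim r^s/w_0$ for every $r\geq\delta$. If $w_0\gtrsim\delta^s$, then $P:=P_1$ immediately satisfies $\#P\gtrsim a\delta^{-s}$ together with $\#(P\cap B(x,r))\lesssim(r/\delta)^s$, and the argument closes.

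The hard part is the remaining regime $w_0\ll\delta^s$, in which $P_1$ contains many more than $a\delta^{-s}$ points and may pack far more than $(r/\delta)^s$ of them into a typical ball. My fix is random subsampling: retain each $p\in P_1$ independently with probability $q:=w_0/\delta^s\in(0,1]$, giving $\EE[\#P]\gtrsim a\delta^{-s}$ and $\EE[\#(P\cap B(x,r))]\lesssim(r/\delta)^s$. Applying Bernstein/Chernoff concentration on top of a union bound over a $\delta$-net of centers and dyadic radii $r\in[\delta,\operatorname{diam} B]$ will produce, with positive probability, a deterministic $P\subset P_1$ for which both inequalities hold simultaneously. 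The logarithmic losses from pigeonholing and the union bound are absorbed into the $\lesssim$ constants at the cost of a harmless factor depending only on $n$ and $s$; alternatively, the same arithmetic can be carried out deterministically by a greedy selection, which is essentially the route taken in Lemma 3.13 of \cite{fassler}.
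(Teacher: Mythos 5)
Your route---continuous Frostman, dyadic reweighting, pigeonhole, and random subsampling---is genuinely different from the direct dyadic-cube construction in Lemma~3.13 of \cite{fassler} which the paper cites, but as written it only delivers a version of the lemma with polylogarithmic losses, not the bound as stated. The first issue is the pigeonhole: selecting a single weight level $w_0$ among $O(\log\delta^{-1})$ dyadic scales yields $\#P_1\gtrapprox a/w_0$, so after subsampling you obtain $\EE[\#P]\gtrapprox a\delta^{-s}$, whereas the lemma asserts $\#P\gtrsim a\delta^{-s}$ with no logarithmic factor. This particular loss is avoidable (retain each $p\in P_0$ independently with probability $w(p)/(C\delta^s)$ instead of thinning to a common level), but you would need to actually make that change.

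The second issue cannot be patched by better bookkeeping and is the real gap: the union bound in the subsampling step does not close for radii near $\delta$. For $r=K\delta$ with $K=O(1)$, the expectation $\EE[\#(P\cap B(x,r))]$ is $O(1)$, yet there are $\sim\delta^{-n}$ such balls; Chernoff plus a union bound over them only gives a simultaneous bound of order $\log\delta^{-1}$, not $O(1)$. The concentration estimate only takes over once $(r/\delta)^s\gtrsim\log\delta^{-1}$, and in the intermediate range $\delta\leq r\lesssim\delta(\log\delta^{-1})^{1/s}$ the only available bound is $\delta$-separation, i.e.\ $\#(P\cap B(x,r))\lesssim(r/\delta)^n$, which exceeds $(r/\delta)^s$ by up to $(\log\delta^{-1})^{(n-s)/s}$. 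So the set $P$ you construct is only $(\delta,s)$-separated up to polylogarithms. Your closing remark that ``the same arithmetic can be carried out deterministically by a greedy selection'' is not a rephrasing of what you wrote: the deterministic route (assign unit mass to each point of a maximal $\delta$-separated subset, iterate from fine to coarse dyadic scales capping the mass of each cube $Q$ at $(\text{side}(Q)/\delta)^s$, and use $H^s_\infty(B)\geq a$ to lower bound the surviving mass by $a\delta^{-s}$) genuinely avoids both losses, and that is the argument you would need to write out to prove the lemma as stated.
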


We will also need the Frostman's lemma for analytic sets.

\begin{lemma}[Frostman's Lemma]
Let $B\subset\mathbb{R}^n$ be analytic and $s>0$, then $H^s(B)>0$ if and only if there is a probability measure $\mu$ supported on $B$ such that for every ball $B(x,r)\subset\mathbb{R}^n$, we have $\mu(B(x,r))\leq r^s$. Here $H^s$ denotes the Hausdorff measure of dimension $s$. We call $\mu$ a Frostman's measure of dimension $s$.
\end{lemma}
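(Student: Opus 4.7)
The plan is to prove both directions, with the ``if'' direction being essentially a direct application of the definition of Hausdorff measure and the ``only if'' direction requiring a dyadic construction in the compact case combined with Choquet capacitability to extend to analytic sets.

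For the ``if'' direction, suppose $\mu$ is a Frostman probability measure on $B$ of dimension $s$. For any countable cover $\{U_i\}$ of $B$ with $\operatorname{diam}(U_i) < \delta$, each $U_i$ lies in a ball of radius $\operatorname{diam}(U_i)$, so $1 = \mu(B) \leq \sum_i \mu(U_i) \leq \sum_i (\operatorname{diam} U_i)^s$. Infimizing over covers yields $H^s_\delta(B) \geq 1$ for all $\delta > 0$, hence $H^s(B) \geq 1 > 0$.

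For the converse, the plan is first to handle compact sets via a dyadic mass-distribution argument. Let $K$ be compact with $H^s(K) > 0$. Fix a large integer $N$, work with the standard dyadic grid at scales $2^{-k}$ for $k = 0, 1, \ldots, N$, and initialize by placing mass $(2^{-N})^s$ on each level-$N$ cube meeting $K$ (distributed against Lebesgue measure). Then propagate upward: at each coarser level $k$, every dyadic cube receives the sum of its children's masses, and whenever that sum exceeds $(2^{-k})^s$ one rescales the measure on that cube down so its total mass becomes exactly $(2^{-k})^s$. The resulting measure $\mu_N$ satisfies $\mu_N(Q) \leq (\operatorname{side} Q)^s$ on every dyadic cube $Q$. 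A counting argument bounds the cumulative rescaling loss by comparing the rescaled cubes against a nearly optimal Hausdorff-content cover of $K$, giving $\mu_N(\mathbb{R}^n) \gtrsim H^s_\infty(K)$ uniformly in $N$. Weak-$\ast$ compactness via Banach--Alaoglu then extracts a limit measure $\mu$ supported on $K$; the dyadic-cube bound implies $\mu(B(x,r)) \lesssim r^s$ since any ball of radius $r$ is covered by $O_n(1)$ dyadic cubes of side $\sim r$. Normalizing and absorbing the dimensional constant into the scaling of $\mu$ yields the desired probability Frostman measure.

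To pass from compact to analytic, I would define the capacity $C_s(E) := \sup\{\mu(E) : \mu \text{ Radon on } \mathbb{R}^n,\ \mu(B(x,r)) \leq r^s \text{ for all } x, r\}$ and verify that it is a Choquet capacity, using weak-$\ast$ compactness on the bounded admissible set of measures to check continuity along increasing sequences of arbitrary sets and along decreasing sequences of compact sets. The Choquet capacitability theorem then yields $C_s(B) = \sup\{C_s(K) : K \subset B \text{ compact}\}$ for analytic $B$. Since $H^s(B) > 0$ implies $H^s(K) > 0$ for some compact $K \subset B$, the compact case produces $C_s(K) > 0$; the corresponding measure is supported in $K \subset B$, and normalizing yields the required probability measure.

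The main obstacle is the bookkeeping in the dyadic construction, specifically securing the uniform lower bound $\mu_N(\mathbb{R}^n) \gtrsim H^s_\infty(K)$ independently of $N$ despite up to $N$ rescaling steps. The cleanest way is to associate each rescaling event to the dyadic cube where it occurs and observe that the collection of ``topmost'' such cubes forms a cover of a definite portion of $K$ by cubes of controlled diameter, so the rescaled total is forced to be comparable to $H^s_\infty(K)$. The capacitability step is conceptually standard, but also requires some care in verifying the continuity axioms for $C_s$, which is where weak-$\ast$ compactness does the real work.
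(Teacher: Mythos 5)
Your ``if'' direction and your compact-case construction are both correct and standard, and they match the route the paper points to via its citation (Mattila's book): prove Frostman for compact sets by dyadic mass distribution plus weak-$\ast$ compactness, then reduce analytic sets to compacts. The gap is in the reduction. You introduce the capacity $C_s$ and prove a capacitability statement for it, but then never actually use it: the measure you ultimately produce comes from applying the compact case to a compact $K \subset B$ with $H^s(K) > 0$, and the existence of such a $K$ is invoked as a separate, unproved fact (``Since $H^s(B)>0$ implies $H^s(K)>0$ for some compact $K\subset B$''). If that inner-regularity statement is granted, the entire $C_s$ detour is redundant. If the point of introducing $C_s$ was to \emph{avoid} appealing to inner regularity of Hausdorff measure on analytic sets, it fails, because capacitability only gives you $C_s(B)=\sup_{K}C_s(K)$, and to extract anything from that you would first need $C_s(B)>0$, which you never establish except through the very inner-regularity fact you were sidestepping. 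The clean version of the reduction applies Choquet's capacitability theorem to the Hausdorff content $H^s_\infty$ itself (which is indeed a Choquet capacity: monotone, continuous along increasing sequences of arbitrary sets, and continuous along decreasing sequences of compacts), yielding $H^s_\infty(B)=\sup\{H^s_\infty(K):K\subset B\text{ compact}\}$; combined with the fact that $H^s$ and $H^s_\infty$ have the same null sets, this gives a compact $K\subset B$ with $H^s(K)>0$, and one then applies the compact case directly. This is essentially what the paper's cited Theorem~B.2.5 provides; the paper simply treats the inner-regularity step as a black box rather than reproving it. So: drop the $C_s$ capacity entirely, and either cite inner regularity (as the paper does) or prove it by applying Choquet to $H^s_\infty$.
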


Frostman's lemma can be directly proved for closed sets, and a proof is presented in chapter 3 of \cite{Fractal}, where Frostman's lemma for analytic sets in general is also stated. A proof of the lemma for Borel sets is in Theorem B.2.5 of \cite{Fractal}, where it is shown that if $A\subset\mathbb{R}^n$ is Borel and $H^s(A)>0$, then $A$ contains a compact subset $K$ with $H^s(K)>0$.
 
Lastly, we will make use of two elementary measure theory facts:

\begin{lemma}\label{basic}
\text{ }\\
1)  If $A\subset\mathbb{R}^n$ is Borel measurable, $s\geq 0$ and $H^s_\infty(A)=0$, then for any $\epsilon>0$, we can cover $A$ with dyadic cubes $D\in\mathcal{D}$ with side length $r(D)<\epsilon$ such that $\sum_{D\in\mathcal{D}}r(D)^s<\epsilon$ and the interiors of the dyadic cubes are disjoint.\\
2) Let $B\subset\mathbb{R}$ be a Borel measurable set with positive Lebesgue measure and $\delta>0$. Then any maximally $\delta$ separated subset $B_\delta$ of $B$ has cardinality $\gtrsim|B|\delta^{-1}.$ 
\end{lemma}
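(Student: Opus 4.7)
The plan is to handle the two parts separately; both are standard measure-theoretic manipulations.

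For part (1), I would begin directly from the definition of Hausdorff content. Since $H^s_\infty(A)=0$, for any prescribed $\eta>0$ there is a countable cover $\{U_i\}$ of $A$ with $\sum_i(\operatorname{diam} U_i)^s<\eta$. First I replace each $U_i$ by a closed axis-aligned cube $Q_i\supset U_i$ of side length $\operatorname{diam} U_i$; this inflates the $s$-sum only by a dimensional constant. Next, I cover each $Q_i$ by at most $3^n$ dyadic cubes whose side lengths lie within a factor of $2$ of $\operatorname{diam} U_i$, producing a cover of $A$ by dyadic cubes whose total $s$-sum is still $\lesssim_{n,s}\eta$. To secure disjoint interiors I invoke the standard dyadic dichotomy that any two dyadic cubes are either nested or have disjoint interiors; keeping only those cubes that are maximal under inclusion in my collection yields a subcover with disjoint interiors and an $s$-sum no larger than before. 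Finally, to simultaneously enforce $r(D)<\epsilon$ and $\sum_D r(D)^s<\epsilon$, I pick $\eta$ small enough in terms of $\epsilon$, $n$, and $s$; the inequality $r(D)^s\leq\eta$ then forces $r(D)\leq\eta^{1/s}$ for $s>0$, and the case $s=0$ is vacuous because $H^0_\infty(A)=0$ already forces $A=\emptyset$.

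For part (2), my approach is the standard packing-versus-covering argument. Let $B_\delta\subset B$ be a maximally $\delta$-separated subset. By maximality, every $x\in B$ lies within $\delta$ of some $y\in B_\delta$; otherwise $B_\delta\cup\{x\}$ would still be $\delta$-separated, contradicting the maximality of $B_\delta$. Hence $B\subset\bigcup_{y\in B_\delta}(y-\delta,y+\delta)$, and taking Lebesgue measures yields $|B|\leq 2\delta\,\#B_\delta$, which rearranges to $\#B_\delta\gtrsim|B|\delta^{-1}$, as claimed.

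Neither part conceals any real obstacle; the only point of care lies in part (1), where one must check that neither the replacement of arbitrary covering sets by dyadic cubes nor the subsequent pruning to disjoint interiors inflates the $s$-sum by more than a dimension-dependent constant. Once that factor is absorbed into the initial choice of $\eta$, both conclusions follow immediately.
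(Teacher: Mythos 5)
Your argument is correct and mirrors the paper's own proof: part (1) inflates a nearly-efficient cover from the definition of Hausdorff content into dyadic cubes and then prunes to the maximal ones (using the nesting/disjointness dichotomy), and part (2) is the usual packing-versus-covering estimate. Incidentally, your step of covering each inflated cube $Q_i$ by at most $3^n$ comparably sized dyadic cubes is slightly more careful than the paper's assertion that a set of diameter $\delta$ sits inside a single dyadic cube of side $\lesssim\delta$ — that claim can fail near dyadic boundaries — and your version is the standard fix, costing only a dimensional constant that gets absorbed into the choice of $\eta$.
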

\begin{proof} For 1), clearly we can assume $s>0$ and we only need to make sure $\sum_{D\in\mathcal{D}}r(D)^{s}<\epsilon$. Then 1) follows from the definition of Hausdorff content and the observations that any set of diameter bounded by some $\delta>0$ is contained in a dyadic cube of side length at most $100\delta$ and if $A,B$ are two dyadic cubes (of maybe different side lengths), then either $A$ contains $B$, or $B$ contains $A$, or the interiors of $A$ and $B$ are disjoint.

For 2), if $\#B_\delta<\frac{1}{100}|B|\delta^{-1}$, by maximality $B$ is contained in the $\delta$ neighborhood $B_\delta$ of $B$, so $|B|\leq 2\delta\cdot\#B_\delta<\frac{1}{50}|B|$, contradiction.
\end{proof}

Equipped with these lemmas, we now move to the restricted projection result theorem \ref{Main} we are after in this paper. We restate the result for convenience with explicit $\epsilon$. 

\begin{theorem}
Suppose $\Pi_t, t\in[0,1]$ is a family of two dimensional subspaces of $\mathbb{R}^4$ and let $P_t:\mathbb{R}^4\rightarrow\Pi_t$ be the orthogonal projection onto $\Pi_t$. Furthermore assume that we have $b_1(t), b_2(t): [0,1]\rightarrow\mathbb{R}^4$ such that $\Pi_t=\text{span }(b_1(t),b_2(t))$ and each coordinate function of $b_i$ is a polynomial in $t$. Furthermore, assume that for any subspace $\pi$ of $\mathbb{R}^4$, $\dim P_t(\pi)=\min(\dim\pi,2)$ for all but finitely many $t\in[0,1]$. Then $\dim_H(P_t(A))\geq 1+\frac{1}{N}$ for almost all $t$ whenever $A$ is a Borel subset of $\mathbb{R}^4$ and $\dim_H A=2$. Here, $N$ is a large integer depending only on the degrees of coordinate functions of $\beta_i$'s and $\gamma_i$'s. 
\end{theorem}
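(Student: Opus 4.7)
The plan is to argue by contradiction using the linear Kakeya inequality of Proposition \ref{thirdone}. Fix $s<1+\tfrac{1}{N}$ and suppose the set $E=\{t\in[0,1]:\dim_H P_t(A)<s\}$ has positive Lebesgue measure; I will derive a contradiction at every sufficiently small scale $\delta$, and letting $s\uparrow 1+\tfrac{1}{N}$ finishes. First I would fix coordinates: by Kaufman's projection theorem (Theorem \ref{Kaufman}) applied to $A$, a generic 2-plane $\Pi\subset\mathbb{R}^4$ satisfies $\dim_H P_\Pi(A)=2$. Choose such $\Pi$ that also satisfies the independence condition on $\gamma_1^\ast,\gamma_2^\ast$ required before Proposition \ref{weak}, and change to an orthonormal basis with $\Pi=\mathrm{span}(e_3,e_4)$. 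In the new coordinates, writing $\mathbb{R}^4=\mathbb{R}^2_x\times\mathbb{R}^2_v$, the curved tubes $T_{x,v,[0,1],\delta}$ of \eqref{curvedtube} are available and the \emph{direction} of such a tube is its $v$-component.

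Next I would discretize at scale $\delta$. Fix $\eta>0$ small (to be chosen at the end). Since $\dim_H P_\Pi(A)=2$, the discrete Frostman lemma (Lemma \ref{Frostman}) produces a $(\delta,2-\eta)$-separated set $V\subset P_\Pi(A)$ with $\#V\gtrsim\delta^{-(2-\eta)}$; lifting each $v\in V$ to a point $a_v\in A$ with $P_\Pi(a_v)=v$ yields a family $\mathbb{T}=\{T_{x_{a_v},v,[0,1],\delta}:v\in V\}$ of curved tubes with $\delta$-separated directions. On the target side, $H^s_\infty(P_t(A))=0$ for $t\in E$ combined with Lemma \ref{basic}(1) lets me cover $P_t(A)$ by dyadic cubes of total $s$-content at most $\delta$; dyadic pigeonholing over the cube sidelength and over $t$ isolates a single scale $r\leq\delta$ and a subset $E'\subset E$ of measure $\gtrapprox|E|$ on which $P_t(A)$ is covered by $\lesssim r^{-s}$ balls of radius $r$, and one can reduce to $r=\delta$ by rerunning the whole argument at that scale. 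Let $S\subset\mathbb{R}^3$ be the union over $t\in E'$ of the corresponding $\delta$-balls sitting over $t$: then $|S|\lesssim|E|\,\delta^{2-s}$, and since the central curve of each $T_{a_v}$ passes through $S$ at every $t\in E'$, each $T\in\mathbb{T}$ satisfies $|T\cap S|\gtrsim|E|\,\delta^2$.

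The contradiction then comes from comparing the trivial lower bound
\begin{equation*}
|E|\,\delta^{\eta}\;\lesssim\;\#\mathbb{T}\cdot|E|\,\delta^2\;\lesssim\;\sum_{T\in\mathbb{T}}|T\cap S|
\end{equation*}
with H\"older's inequality and Proposition \ref{thirdone} applied at $p=2\beta$ (where the output exponent simplifies as $-2+(4\beta-\tfrac{1}{2})/(2\beta)=-\tfrac{1}{4\beta}$):
\begin{equation*}
\sum_{T\in\mathbb{T}}|T\cap S|\;\leq\;\Big\|\sum_{T\in\mathbb{T}}\chi_T\Big\|_{2\beta}\cdot|S|^{1-1/(2\beta)}\;\lessapprox\;\delta^{-1/(4\beta)}\cdot\bigl(|E|\,\delta^{2-s}\bigr)^{1-1/(2\beta)}.
\end{equation*}
Matching exponents of $\delta$ forces $\eta\geq -\tfrac{1}{4\beta}+(2-s)\tfrac{2\beta-1}{2\beta}$; a short computation with $\beta=(3N-2)/(4(N-1))$ shows this is equivalent to $s\geq 1+\tfrac{1}{N}-O(\eta)$, which contradicts $s<1+\tfrac{1}{N}$ once $\eta$ is chosen small enough.

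The main obstacle I foresee is the bookkeeping in the discretization paragraph: several pigeonholes (over cube scale, over $t\in E$, and over the subfamily of tubes that retain a large intersection with $S$) need to be run simultaneously while preserving the lower bound $\#\mathbb{T}\gtrsim\delta^{-(2-\eta)}$, the upper bound $|S|\lesssim|E|\,\delta^{2-s}$, and the per-tube concentration $|T\cap S|\gtrsim|E|\,\delta^2$, absorbing the polylogarithmic losses into the $\lessapprox$ notation. Once those details are in place, the appeal to Proposition \ref{thirdone} is a one-line black-box step, and the exponent calculation reduces to the short identity $(4\beta-\tfrac{5}{2})/(2\beta-1)=(N+1)/N$.
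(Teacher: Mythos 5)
Your overall strategy is the same as the paper's: discretize at scale $\delta$, produce a family of curved $\delta$-tubes with $\delta$-separated directions via Kaufman plus the discrete Frostman lemma, bound the size of the union, and feed everything into Proposition~\ref{thirdone} via H\"older at $p=2\beta$. Your exponent arithmetic at the end is also correct. But there is a genuine gap in the middle of the discretization, and it is precisely the place where the paper's proof takes extra care.

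The step ``dyadic pigeonholing \ldots isolates a single scale $r\leq\delta$ and a subset $E'\subset E$ \ldots on which $P_t(A)$ is covered by $\lesssim r^{-s}$ balls of radius $r$'' is not justified. From $H^s_\infty(P_t(A))=0$ you only get a cover of $P_t(A)$ by cubes of \emph{mixed} sidelengths with small total $s$-content; restricting to the cubes of a single sidelength $r$ does not cover $P_t(A)$, and there is no reason $P_t(A)$ can be covered by $\lesssim r^{-s}$ balls at a single scale $r$ --- that is a box-counting statement, and the upper box dimension of a set can strictly exceed its Hausdorff dimension (e.g.\ $\{0\}\cup\{1/n\}$ has Hausdorff dimension $0$ but upper box dimension $1/2$). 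Consequently the key per-tube bound $|T_{a_v}\cap S|\gtrsim|E|\delta^2$ fails: the curve $t\mapsto(P_t(a_v),t)$ lands in \emph{some} cube of the mixed-scale cover for each $t\in E'$, but not necessarily in a scale-$r$ cube, so it need not pass through your set $S$.

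The related structural issue is the order of operations. You first apply Kaufman to all of $A$ and run discrete Frostman to choose the lifts $a_v$, and only afterwards pigeonhole over scales. This means the chosen $a_v$ are arbitrary points of $A$, with no control over the scale at which they are captured by the covers of $P_t(A)$. The paper does it in the opposite order: it fixes a Frostman measure $\nu_A$ on $A$, pigeonholes over scale and over $t$ to find a single scale $\delta=2^{-k}$ and a $\delta$-separated set $\Theta'\subset\Theta$ such that, in the product $A\times\Theta'$, the set of pairs $(z,t)$ with $P_t(z)$ falling in a scale-$\delta$ cube carries a definite fraction of $\nu_A\times\mu$; then a Fubini/Chebyshev step isolates a set of $z$ that are captured at scale $\delta$ for $\gtrsim k^{-2}\#\Theta'$ values of $t$; Kaufman and discrete Frostman are applied only to \emph{that} selected set, so the resulting $A'$ consists precisely of points whose curves genuinely thread the scale-$\delta$ cubes often. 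This two-sided pigeonholing is what replaces your unjustified single-scale covering claim. If you reorganize your proof to select the good points after the scale pigeonhole (and drop the false covering-number assertion in favor of the $\nu_A$-mass bookkeeping), the rest of your argument --- the bound $|S|\lessapprox\delta^{2-s}$, the lower bound on the $L^1$ mass, H\"older, and the appeal to Proposition~\ref{thirdone} --- goes through essentially as you wrote it.
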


\begin{proof}
Without loss of generality we may assume $A\subset [0,1]^4$. We can also redefine $P_t$ to be the map $(x,v)\mapsto\Phi(x,v,t)$. Let $s\in(0,2)$ be arbitrarily close to 2. We can pick $s'\in(s,2)$. For any positive $a<1+\frac{1}{N}$, we let $\Theta=\Big\{t\in[0,1]\Big|\,\dim_H(P_t(A))<a\Big\}$ be the exception set and assume for contradiction that $|\Theta|>0$.  Let $\nu$ be the Lebesgue outer measure on $[0,1]$ restricted to $\Theta$ since we do not know if $\Theta$ is Lebesgue measurable. Fix $\epsilon>0$. Since by definition $\dim_H A=\inf\{w\geq 0, H^w_\infty(A)=0\}$, we have $H^{a}_\infty(P_t(A))=0$ for each $t\in\Theta$. For each $t\in\Theta$ we can cover $P_t(A)$ with collections of disjoint dyadic cubes $\mathcal{D}_t$, each of which has side length $2^{-j}$ for some positive integer $j$ so that $2^{-j}<\epsilon$. Furthermore, we can make sure that $\sum_{D\in\mathcal{D}_t}r(D)^a<\epsilon$. For each positive integer $k$, we let $\mathcal{D}_{k,t}$ denote the set of dyadic cubes of side length $2^{-k}$ in $\mathcal{D}_t$. Then we must have $\#\mathcal{D}_{k,t}\leq \epsilon 2^{ak}$ for each $k\in\mathbb{Z}^+$.

Let $\nu_A$ be a Frostman's measure of dimension $s'$ supported on $A$. For each $t\in\Theta$, we let $\mathcal{T}_t=\{P_t^{-1}(D),D\in\mathcal{D}_t\}$ and $\mathcal{T}_{k,t}=\{P_t^{-1}(D),D\in\mathcal{D}_{k,t}\}$ where $k\in\mathbb{Z}^+$ and $2^{-k}<\epsilon$, then $\mathcal{T}_t=\bigcup_{k\in\mathbb{Z}^+}\mathcal{T}_{k,t}$ and $A\subset\bigcup_{T\in\mathcal{T}_t}T$ for each $t\in\Theta$. Therefore, for each $t\in\Theta$, there exists a $k(t)\in\mathbb{Z}^+$ such that $$\nu_A\big( A\bigcap\cup_{T\in T_{k(t)}}T\big)\geq\frac{1}{10k(t)^2}\nu_A(A)=\frac{1}{10 k(t)^2}.$$
For each positive integer $k$ we define $\Theta_k=\{k(t)=k|t\in\Theta\}$, then again we can conclude that there exists a fixed $k$ such that the Lebesgue outer measure $\nu(\Theta_k)>0$. We fix this $k$ and set $\delta=2^{-k}$, then $\delta<\epsilon$ by our assumptions. Furthermore, we let $\Theta'$ be a maximum $\delta$ separated subset of $\Theta_k$, then $\#\Theta'\gtrsim\delta^{-1}$ by 2 of lemma \ref{basic}.

Next, we consider the set $S$ defined as $\{(z,t)\in A\times\Theta': z\in\cup_{T\in \mathcal{T}_{k,t}}T\}$, and we use $\mu$ to denote the counting measure on $\Theta'$. Furthermore, define sections as $S_z=\{t\in\Theta'|(z,t)\in S\}$\ and $S_t=\{z\in A|(z,t)\in S\}.$ Then by definition of $\Theta'$ we have $$(\nu_A\times\mu)(S)\geq\frac{1}{10k^2}\mu(\Theta'),$$ which implies that  $$(\nu_A\times\mu)\Big(\big\{(z,t)\in S:\mu(S_z)\geq\frac{1}{20k^2}\mu(\Theta')\big\}\Big)\geq\frac{1}{20k^2}\mu(\Theta');$$ which further implies that $$\nu_A\bigg(\big\{z:\mu(S_z)\geq\frac{1}{20k^2}\mu(\Theta')\big\}\bigg)\geq\frac{1}{20k^2}.$$ 
This implies that $\dim_H \{z:\mu(S_z)\geq\frac{1}{20k^2}\mu(\Theta')\big\}>s'$ as $\nu_A$ is a Frostman measure. By the Kauffman's version of Marstrand's theorem, we can find a two dimensional subspace $\Pi\subset\mathbb{R}^4$, such that $\dim_H P(\{z:\mu(S_z)\geq\frac{1}{20k^2}\mu(\Theta')\big\})\geq s'$, where $P:\mathbb{R}^4\mapsto\Pi$ is the orthogonal projection onto $\Pi$. We can then apply lemma \ref{Frostman} to the image $P(\{z:\mu(S_z)\geq\frac{1}{20k^2}\mu(\Theta')\big\})$. This gives a $\delta$ separated subset $A'\subset\big\{z\in A:\mu(S_z)\geq\frac{1}{20k^2}\mu(\Theta')\big\}$ with $\#A'\gtrapprox\delta^{-s}$, and such that $P$ is a bijection on $A'$ and $P(A')$ is a $\delta$ separated subset of $\Pi$. 

Unpacking the definitions, we see that for each $z\in A'$ and $t\in S_z$, $P_t(z)$ is contained in a side length $\delta$ square in $\mathcal{D}_{k,t}$. Furthermore a straightforward calculation shows that there exists a sufficiently large $C>0$ such that $|\Phi(x,v,t)-\Phi(x',v',t)|\leq \delta$ if $|t'-t|\leq\frac{\delta}{C}$. For each $t\in\Theta'$, let $E_{k,t}\subset\mathbb{R}^3$ be the three dimensional cube given by the product of $[t-\frac{\delta}{C},t+\frac{\delta}{C}]$ and the squares in $\mathcal{D}_{k,t}.$ Since points in $S_z$ are $\delta$ separated, we see that if we let $\Gamma_z:t\rightarrow(P_t(z ),t)$ then the $C\delta$ neighborhood $\Gamma_z^{C\delta}$ of $\Gamma_z$ contains $\#\mu(S_z)$ number of sidelength $\approx\delta$ 3-dimensional cubes. Let $E=\cup_{t\in\Theta'}E_{k,t}$. All such cubes for different $z\in A'$ are disjoint with each other because $\Theta'$ is $\delta$ separated. Therefore we are able to conclude that  
$$\int_E \sum_{z\in A'}1_{\Gamma_z^{C\delta}}(t,y)\,dy\,dt=\sum_{z\in A'}\int_E 1_{\Gamma_z^{C\delta}}(t,y)\,dy\,dt\geq\# A'\cdot\text{ lower bound for the cardinality of }S_z\cdot\,\delta^3$$$$\gtrapprox\delta^{-s}\cdot \delta^{3}\cdot\delta^{-1}=\delta^{2-s}.$$
On the other hand we have $|E|\lesssim \delta^3\cdot\#\Theta'\cdot \text{ upper bound for }\#\mathcal{D}_{k,t}\lessapprox\delta^{3}\times\delta^{-1}\times\epsilon\delta^{-a}\leq\delta^{2-a}$, Holder's inequality then implies that $$\bigg|\bigg|\sum_{z\in A'}1_{\Gamma_z^{C\delta}}(t,y)\bigg|\bigg|_{L^{2\beta}}\gtrapprox\delta^{2-s}\delta^{-(2-a)(1-\frac{1}{2\beta})}.$$

Notice that if we write $z\in A'$ as $z=(x,v)$ where $x,v\in\mathbb{R}^2$, then $\Gamma_z(t)=(\Phi(x,v,t),t)$. The collection of $z\in A'$ might not have $\delta$ separated directions $v$, but by construction $P(A')$ is $\delta$ separated. Therefore we may apply proposition \ref{linear} with $p=2\beta$, and compare with the inequality above, we see that $\delta^{-2+\frac{4\beta-\frac{1}{2}}{2\beta}}\gtrapprox \delta^{2-s-(2-a)(1-\frac{1}{2\beta})}$. Now remember by definition $\delta<\epsilon$, so by letting $\epsilon\downarrow 0$ and $s\uparrow 2$, we have $\frac{1}{4\beta}\geq (2-a)(1-\frac{1}{2\beta})$, which simplies to $a\geq 1+\frac{1}{N}$. This is impossible since we assumed $a<1+\frac{1}{N}$. This shows that $\Big\{t\in[0,1]\Big|\,P_t(A)<a\Big\}$ has Lebesgue outer measure 0 whenever $a<1+\frac{1}{N}$, taking a rational sequence that increases to $1+\frac{1}{N}$, we see that $\dim_H P_t(A)\geq 1+\frac{1}{N}$ for almost every $t\in[0,1].$
\end{proof}
This concludes the proof. 

\bibliographystyle{ieeetr} 
\bibliography{references}

\end{document}